\def\acts{\ \rotatebox[origin=c]{-90}{$\circlearrowright$}\ }
\def\racts{\ \rotatebox[origin=c]{90}{$\circlearrowleft$}\ }
\theoremstyle{plain}
    \newtheorem{thm}{Theorem}[section]
       \newtheorem{lemma}[thm]{Lemma}
           \newtheorem{theorem}[thm]{Theorem}
    \newtheorem{proposition}[thm]{Proposition}
\theoremstyle{definition}
    \newtheorem{claim}[thm]{Claim}
    \newtheorem{definition}[thm]{Definition}
     \newtheorem{corollary}[thm]{Corollary}
       \newtheorem{conjecture}[thm]{Conjecture}
    \newtheorem{question}[thm]{Question}
    \newtheorem{notation}[thm]{Notation}
    \newtheorem*{notation*}{Notation and Terminology}
    \newtheorem{remark}[thm]{Remark}
\theoremstyle{remark}
\numberwithin{equation}{section}
\newcommand{\bbr}{\mathbb{R}}
\newcommand{\bbq}{\mathbb{Q}}
\newcommand{\bbz}{\mathbb{Z}}
\newcommand{\cali}{\mathcal{I}}
\renewcommand{\ker}{\textup{Ker}\ }
\newcommand{\Q}{\mathbb{Q}}
\newcommand{\R}{\mathbb{R}}
\newcommand{\Z}{\mathbb{Z}}
\newcommand{\OO}{\mathcal{O}}
\newcommand{\diag}{\operatorname{diag}}
\newcommand{\id}{\operatorname{id}}
\newcommand{\Imm}{\operatorname{Im}}
\newcommand{\NE}{\overline{\operatorname{NE}}}
\newcommand{\Nef}{\operatorname{Nef}}
\newcommand{\NS}{\operatorname{NS}}
\newcommand{\PE}{\operatorname{PE}}
\newcommand{\rank}{\operatorname{rank}}
\newcommand{\Supp}{\operatorname{Supp}}
\newcommand{\N}{\operatorname{N}}
\newcommand{\Pic}{\operatorname{Pic}}
\title[Dynamical Iitaka theory]{Dynamical Iitaka theory on Fano contractions}
\author{Sheng Meng}
\address{
\textsc{School of Mathematical Sciences, Shanghai Key Laboratory of PMMP}\endgraf 
\textsc{East China Normal University, Shanghai 200241, China}
}
\email{smeng@math.ecnu.edu.cn}
\author{Long Wang}
\address{Center for Mathematics and Interdisciplinary Sciences, Fudan University, Shanghai, 200433,
China; Shanghai Institute for Mathematics and Interdisciplinary Sciences, Shanghai, 200433, China}
\email{wanglll@fudan.edu.cn}
\author{Tianle Yang}
\address{
\textsc{School of Mathematical Sciences, Shanghai Key Laboratory of PMMP}\endgraf 
\textsc{East China Normal University, Shanghai 200241, China}
}
\email{10211510098@stu.ecnu.edu.cn}
\subjclass[2020]{
14E30,   
14M25,  
20K30, 
37P55. 
}
\keywords{dynamical Iitaka, dynamical degree, arithmetic degree, Kawaguchi-Silverman conjecture}
\thanks{The first author is supported by the Shanghai Pilot Program for Basic Research, the Science and Technology Commission of Shanghai Municipality (No. 22DZ2229014), and the National Natural Science Foundation of China. The second author is supported by the National Key Research and Development Program of China (No. 2023YFA1010600), the National Natural Science Foundation of China (No. 12401052), and the NSFC for Innovative Research Groups (No. 12121001).} 
\begin{document}

\begin{abstract}
    We give several structure theorems for certain surjective endomorphisms on Mori fibre spaces,
    based on the dynamical Iitaka fibration of the ramification divisor.
    As an application, we prove the Kawaguchi-Silverman conjecture for projective bundles over abelian varieties or smooth projective varieties of Picard number one.
\end{abstract}

\maketitle

\tableofcontents

\thispagestyle{empty}


\section{Introduction}\label{sec: introduction}

We work over an algebraically closed field $\mathbf{k}$ of characteristic zero, unless specified otherwise.

\subsection{Dynamical Iitaka dimension and fibration}\label{subsec: background}
The study of surjective endomorphisms on smooth projective varieties lies at the intersection of algebraic geometry and complex dynamics, 
with central questions revolving around their structural decomposition, invariance and equivariancy. 
Precisely, given a surjective endomorphism \( f: X \to X \) of a normal projective variety $X$, we aim to find possible dominant rational maps $\pi:X\dashrightarrow Y$ such that there exists a surjective endomorphism $g\colon Y\to Y$ with $g\circ\pi=\pi\circ f$. 
When such $g$ exists, we say $\pi$ is $f$-equivariant and will use $f|_Y$ instead of $g$ to make the notation clear.

In \cite{MZ23b}, De-Qi Zhang and the first author introduced a natural concept of the dynamical Iitaka dimension and dynamical Iitaka fibration. It says that given any effective $\mathbb{Q}$-Cartier divisor $D$, 
one can construct an $f$-equivariant dominant rational map 
$$\varphi_{f,D}\colon X\dashrightarrow Y$$ such that
$\dim Y =\kappa_f(X,D)$
where $\kappa_f(X,D)$ is called the $f$-Iitaka dimension of $D$.
We refer to Section \ref{sec: dynamical iitaka} for the precise definitions and several facts.

For simplicity, we now assume $X$ is smooth.
The dynamical Iitaka theory offers a powerful framework that enables us to construct equivariant fibrations. 
One of its remarkable features is the ability to initiate this construction from an arbitrary effective divisor $D$, 
breaking free from the constraint that the divisor $D$ must be a pullback eigenvector. 
In \cite{MZ23b}, several progress has been made in exploring the application of this theory to exceptional divisors of threefolds.
However, we contend that the study of the ramification divisor \(R_f\) holds even greater significance. 
When the variety \(X\) lacks a divisorial contraction, \(R_f\) emerges as a central object of investigation. 
Unlike other divisors, \(R_f\) is inherently linked to the dynamics of surjective endomorphisms, and its effectiveness provides a solid foundation for further analysis. 
By focusing on \(R_f\) within the dynamical Iitaka theory, 
we can delve deeper into the intrinsic geometric and dynamical structures of \(f\acts X\), 
potentially uncovering new insights that are otherwise obscured when relying solely on the study of other types of divisors. 
This makes the exploration of \(R_f\) not only a natural but also an essential direction for advancing our understanding of the dynamical and geometric properties of varieties in the absence of divisorial contractions. 

\smallskip
\noindent\textbf{Dynamical Iitaka Program for ramification divisor.}
We consider three situations of $\kappa_f(X,R_f)$ and briefly introduce how the dynamical Iitaka theory works.
Here $\kappa_f(X,R_f)$ denotes the dynamical Iitaka dimension of $R_f$, which is exactly the Iitaka dimension of $R_{f^t}$ when $t$ is sufficiently large. We refer to Definition \ref{def: dynamical Iitaka dimension} and Lemma \ref{lem: composition formula} for more details. 
\begin{itemize}
    \item $\kappa_f(X,R_f)=0$. 
    Then $f^{-1}(\Supp R_f)=\Supp R_f$ and the restriction $$f|_{X\backslash \Supp R_f}\colon X\backslash \Supp R_f\to X\backslash \Supp R_f$$ is \'etale.
    Certain cases have been studied;
    see \cite{MZ19, MZg23, MZg24, MYY25, MYY24b}.
    \item $0<\kappa_f(X,R_f)<\dim X$.
    We then have an $f$-equivariant dominant rational map
    $$\varphi_{f,R_f}\colon X\dashrightarrow Y$$
    with $0<\dim Y=\kappa_f(X,R_f)<\dim X$.
    One central problem is stated in Question \ref{que: main1} that whether the first dynamical degree is preserved, i.e., $\delta_f=\delta_{f|_Y}$. Here, $\delta_f$ is the spectral radius of $f^*|_{\NS(X)}$ and $\delta_f\ge \delta_{f|_Y}$ always holds; see Definition \ref{def: delta_f}.
    \item $\kappa_f(X,R_f)=\dim X$. 
    In this case, $R_{f^s}$ is big when $s\gg 1$ (Lemma \ref{lem: composition formula}).
    De-Qi Zhang and the first author proposed Question \ref{que: main2}. 
    A positive answer to this question will allow us to apply the equivariant minimal model program to further study $f$; see \cite{MZ23a} for a survey.
\end{itemize}

\begin{question}\label{que: main1}
    Let $f\colon X\to X$ be a surjective endomorphism of a smooth projective variety with $\varphi_{f,R_f}\colon X\dashrightarrow Y$ the induced dynamical Iitaka fibration.
    Suppose $\kappa_f(X, R_f)>0$. Does the equality of first dynamical degrees $\delta_f=\delta_{f|_Y}$ hold?
\end{question}

\begin{question}[{\cite[Question 6.9]{MZ23b}}]\label{que: main2}
    Let $f\colon X\to X$ be a surjective endomorphism of a smooth projective variety.
    Suppose $R_f$ is big. Is $f$ int-amplified?
\end{question}

\subsection{Structure theorems}
In this paper, we mainly develop the dynamical Iitaka theory on one fundamental situation appeared in the minimal model program: when $X$ admits an extremal Fano contraction. 

The following are our main results.
Note that we verify Question \ref{que: main1} and \ref{que: main2} in the situation of Theorem \ref{mainthm: abelian} and \ref{mainthm: rho1}.
By extremal Fano contraction (or the Mori fibre space), we mean a contraction of some extremal ray with the dimension decreased.

\begin{theorem}\label{mainthm: abelian}
    Let $f$ be a surjective endomorphism of a smooth projective variety $X$ admitting an extremal Fano contraction $\pi:X\to Y$ to an abelian variety $Y$ of positive dimension.
    Suppose $f$ admits a Zariski dense orbit and $\delta_f>\delta_{f|_Y}$.
    Then the following hold.
    \begin{enumerate}
        \item The ramification divisor satisfies $f^*R_f\equiv \delta_f R_f$.
        \item There exists an $f$-equivariant dominant rational map $\varphi:X\dashrightarrow Z$, 
        which is the $f$-Iitaka fibration of $R_f$, 
        such that $0<\dim Z<\dim X$ and $f|_Z$ is $\delta_f$-polarized.
    \end{enumerate}
\end{theorem}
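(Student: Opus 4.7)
\smallskip
\noindent\textbf{Proof proposal.}
\smallskip

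The plan is to exploit the spectral decomposition of $f^*$ on $\NS(X)_{\R}$ coming from the hypothesis $\delta_f > \delta_{f|_Y}$. Since $\pi$ is an extremal contraction, $\rho(X) = \rho(Y)+1$, so $\NS(X)_{\R} = \pi^*\NS(Y)_{\R} \oplus \R\,\eta$ for any class $\eta$ not in $\pi^*\NS(Y)_{\R}$. The subspace $\pi^*\NS(Y)_{\R}$ is $f^*$-invariant and $f^*$ restricts there to $\pi^*\circ g^*$ with $g := f|_Y$, so the spectrum of $f^*$ on $\NS(X)_{\R}$ is that of $g^*$ on $\NS(Y)_{\R}$ together with one extra eigenvalue coming from the quotient line; by the assumption $\delta_f > \delta_{f|_Y}$ this extra eigenvalue is exactly $\delta_f$ and is simple, and I may take $\eta$ to be the corresponding $f^*$-eigenvector, so the splitting becomes $f^*$-diagonal.

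Using $K_Y \equiv 0$ and the $\pi$-ampleness of $-K_{X/Y}$, I would decompose
\[
K_{X/Y} \equiv \pi^*A - c_0\,\eta, \qquad c_0 > 0,
\]
for a unique $A \in \NS(Y)_{\R}$. A direct computation using $f^*\eta = \delta_f\,\eta$ then gives
\[
R_f \equiv K_{X/Y} - f^*K_{X/Y} = \pi^*(A - g^*A) + c_0(\delta_f - 1)\,\eta.
\]
Since $\pi^*$ is injective on $\NS(Y)_{\R}$ and the operator $g^* - \delta_f$ is invertible on $\NS(Y)_{\R}$ (again as $\delta_f > \delta_{f|_Y}$), the identity $f^*R_f \equiv \delta_f R_f$ of part (1) is \emph{equivalent} to the single equation $g^*A \equiv A$ in $\NS(Y)_{\R}$, i.e.\ to the vanishing of $\beta := A - g^*A$.

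Proving $\beta \equiv 0$ is the main difficulty: linear algebra is not enough, and both the abelianness of $Y$ and the Zariski dense orbit of $f$ must intervene. My plan is to project the dense $f$-orbit onto $Y$ to obtain a dense $g$-orbit, use that a surjective endomorphism of an abelian variety with a dense orbit is, up to an iterate, a translate of an isogeny whose action on $\NS(Y)_{\R}$ is semisimple, and then combine this with the effectiveness of $R_f$ and the Perron--Frobenius eigendirection $\eta$. The key observation is that the normalised iterates $\delta_f^{-n}(f^n)^*R_f$ are all effective and converge in $\NS(X)_{\R}$ to $c_0(\delta_f-1)\,\eta$, while the pullback component $\pi^*(g^*)^n\beta$ can grow only at the strictly slower rate $\delta_{f|_Y}^n = o(\delta_f^n)$; interpreting this residual pullback piece as a numerical class whose push-forward to $Y$ is compelled to be $g$-preperiodic along the dense orbit should force $\beta \equiv 0$. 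An alternative arithmetic route is to view a nonzero $\beta$ as the obstruction to a canonical Kawaguchi--Silverman height on $Y$ and exclude it by the denseness of the projected orbit.

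Once part (1) is established, part (2) follows from the dynamical Iitaka machinery of \cite{MZ23b}: the effective divisor $R_f$, now an $f^*$-eigenvector of eigenvalue $\delta_f$, has a well-defined $f$-Iitaka fibration $\varphi_{f,R_f}: X \dashrightarrow Z$ to which the eigenvalue relation descends, yielding an ample class on $Z$ on which $(f|_Z)^*$ acts by $\delta_f$, i.e.\ $f|_Z$ is $\delta_f$-polarised. For the dimension bounds, $\dim Z > 0$ since $R_f|_F \equiv c_0(\delta_f-1)\,\eta|_F$ is a positive multiple of the ample generator of $\NS(F)_\R$ for a general fibre $F$ of $\pi$, so $\kappa_f(X,R_f) \geq \dim F \geq 1$; while $\dim Z < \dim X$ follows from the projection-formula identity $\delta_f^{\dim X}\eta^{\dim X} = (\deg f)\,\eta^{\dim X}$ combined with $\deg f = \delta_f^{\dim F}\deg g \leq \delta_f^{\dim F}\delta_{f|_Y}^{\dim Y} < \delta_f^{\dim X}$ (using $\delta_{f|_Y} < \delta_f$ and $\dim Y > 0$), which forces $\eta^{\dim X} = 0$ and hence $R_f \equiv c\,\eta$ is not big.
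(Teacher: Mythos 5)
Your reduction of part (1) to the vanishing of $\beta = A - g^*A$ matches the paper's setup, but that reduction is the easy half, and you never actually prove $\beta\equiv 0$: the phrases ``should force'' and ``an alternative arithmetic route'' mark a genuine hole. Your ``key observation'' that $\delta_f^{-n}(f^n)^*R_f$ converges to $c_0(\delta_f-1)\eta$ only recovers that $\eta$ is pseudo-effective; it gives no information about $\beta$, and semisimplicity of $g^*$ does not kill $\beta=(1-g^*)A$ either. The paper's argument requires three concrete inputs you are missing: (a) the base component $B$ of $K_X$ is nef, which uses that an abelian base contains no rational curves, so $\pi$ is the \emph{only} $K_X$-negative extremal contraction; (b) the decomposition $\PE(X)=\pi^*\PE(Y)\oplus\R_{\ge 0}D$ (Theorem \ref{thm: cone}), which forces $\beta$ to be pseudo-effective because $R_f$ is effective; and (c) a telescoping integrality bound showing $mB\cdot H^{\dim Y-1}\ge n$ for all $n$ whenever $\beta\not\equiv 0$ --- and here the nefness of $B$ is indispensable, since without it the term $(g^{n+1})^*B\cdot H^{\dim Y-1}$ could be arbitrarily negative and the telescoping sum proves nothing.

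The second, more serious, gap is in part (2): you assert $\kappa_f(X,R_f)\ge\dim F\ge 1$ because $R_f|_F$ is numerically ample on a general fibre $F$. Fibrewise ampleness of a restriction does not bound the Iitaka dimension from below: Iitaka dimension is governed by global sections, and a divisor can restrict to an ample class on every fibre while having $\kappa=0$ (this is precisely the degenerate situation the paper must exclude). Establishing $\kappa_f(X,R_f)>0$ is the content of Theorem \ref{thm: pos} and is the hardest step of the whole proof: it needs the toric-fibre characterization (Theorem \ref{mainthm: a character of toric fibraion over k}), the relative Picard rank bound of Lemma \ref{lem: relative picard}, an equivariant base change, and --- crucially --- the Zariski dense orbit hypothesis via Lemma \ref{lem: wild morphism in codimension 1}. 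Your proposal never uses the dense orbit hypothesis in any substantive way, which is a reliable sign that something essential is missing. (By contrast, your argument that $\kappa_f(X,R_f)<\dim X$ via $\eta^{\dim X}=0$ and $\deg f<\delta_f^{\dim X}$ is sound and is a reasonable alternative to the paper's route through polarization.)
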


\begin{remark}
  The assumption of $f$ admitting a Zariski dense orbit is natural.
  Indeed, if $f$ does not admit such an orbit, the Zariski dense orbit conjecture asserts the existence of an $f$-equivariant dominant rational map to $\mathbb{P}^1$ and $f$ descends to an automorphism of finite order. 
  The Zariski dense orbit conjecture is known to be true when the base field is further uncountable by Amerik and Campana \cite{AC08}.
  We refer to \cite{Xie25} for more information.
\end{remark}

When the base of the Fano contraction is a smooth projective variety of Picard number one, we need to further require the contraction to be smooth and $\delta_{f|_Y}=1$ but relax the assumption on the existence of a Zariski dense orbit.
The $f$-equivariant assumption is automatically satisfied after iteration (cf.~\cite[Remark 6.3]{MZ18}).

\begin{theorem}\label{mainthm: rho1}
    Let $f$ be a surjective endomorphism of a smooth projective variety $X$ admitting an $f$-equivariant {\bf smooth} extremal Fano contraction $\pi:X\to Y$ with $\rho(Y)=1$.
    Suppose $\delta_f>\delta_{f|_Y}=1$.
    Then the following hold.
    \begin{enumerate}
        \item The ramification divisor satisfies $f^*R_f\equiv \delta_f R_f$.
        \item There exists an $f$-equivariant dominant rational map $\varphi:X\dashrightarrow Z$, 
        which is the $f$-Iitaka fibration of $R_f$, 
        such that $0<\dim Z<\dim X$ and $f|_Z$ is $\delta_f$-polarized.
    \end{enumerate}
\end{theorem}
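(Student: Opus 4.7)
The plan is to translate the hypothesis into linear algebra on the rank-two space $\NS(X)_\mathbb{R}$, use smoothness of $\pi$ and triviality of $(f|_Y)^*$ to pin down the numerical class of $R_f$, and then realize the $f$-Iitaka fibration as an extremal contraction.

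Since $\pi$ is extremal with $\rho(Y) = 1$, we have $\rho(X) = 2$. Fix $H$ ample on $Y$ and set $H_X := \pi^*H$. The hypotheses $\delta_{f|_Y} = 1$ and $\rho(Y) = 1$ force $(f|_Y)^*H \equiv H$, so $f|_Y$ has topological degree one and is an automorphism; in particular $R_{f|_Y} = 0$ and $(f|_Y)^*K_Y \equiv K_Y$. Thus $H_X$ is a $1$-eigenvector of $f^*$ and $\NS(X)_\mathbb{R} = \mathbb{R}\,H_X \oplus \mathbb{R}\,v$ where $v$ is a $\delta_f$-eigenvector. Smoothness of $\pi$ gives $K_X \sim \pi^*K_Y + K_{X/Y}$, so
\[
R_f \equiv K_X - f^*K_X \equiv \pi^*\bigl(K_Y - (f|_Y)^*K_Y\bigr) + \bigl(K_{X/Y} - f^*K_{X/Y}\bigr) \equiv (\id - f^*)[K_{X/Y}].
\]
Writing $[K_{X/Y}] = \alpha H_X + \beta v$ in the eigenbasis, the operator $\id - f^*$ annihilates the $H_X$-component and scales $v$ by $(1 - \delta_f)$, hence $[R_f] = (1-\delta_f)\beta\,v \in \mathbb{R}\,v$, which yields $(1)$.

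For $(2)$, I first observe that $v$ lies on the boundary of $\Nef(X)$: the rank-two, diagonalizable, cone-preserving map $f^*$ with distinct positive eigenvalues fixes each extremal ray of $\Nef(X)$; one such ray is $H_X$, so the other is spanned by $v$. Restricting to a general $\pi$-fiber $F$ shows $[R_f] \neq 0$: by smoothness of $\pi$ and $f$-equivariance, $R_f|_F = R_{f|_F}$, and the product formula for dynamical degrees under smooth equivariant morphisms, together with $\rho(F) = 1$, gives $\delta_{f|_F} = \delta_f > 1$, so $f|_F$ is a nontrivial polarized endomorphism and $R_{f|_F}$ is ample on $F$. In particular $\beta \neq 0$, $v$ is $\pi$-relatively ample and globally nef, and $[R_{f^s}] = \tfrac{\delta_f^s - 1}{\delta_f - 1}[R_f]$ is a positive multiple of $v$ for every $s$. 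The extremal nef class $v$ then defines, via the base-point-free theorem, an $f$-equivariant morphism $\varphi: X \to Z$ with $\varphi^*A_Z \equiv c\,v$ for some ample $A_Z$ on $Z$, which agrees with the $f$-Iitaka fibration of $R_f$. The identity $f^*v = \delta_f v$ then forces $f|_Z^* A_Z \equiv \delta_f A_Z$, so $f|_Z$ is $\delta_f$-polarized.

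The main obstacle I foresee is showing $\dim Z < \dim X$, equivalently that $v$ is not big, equivalently that the extremal contraction defined by $v$ is of fibering type rather than divisorial. Were it divisorial, it would contract a prime $f$-invariant divisor $E_0$ whose intersection with a general $\pi$-fiber $F$ would be an $(f|_F)$-invariant divisor on the smooth Fano $F$ with $\rho(F) = 1$, while $\pi(E_0) \subseteq Y$ would be an $(f|_Y)$-invariant subvariety of a base of Picard rank one. I expect the delicate step to be ruling this out by combining $f$-equivariance of $\pi$, the smoothness hypothesis, the structure of the second extremal ray of a Mori fiber space of Picard rank two, and the strict inequality $\delta_f > \delta_{f|_Y} = 1$.
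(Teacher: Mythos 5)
Your proof of part (1) is essentially the paper's (Lemma \ref{lem: eigenvector rho1}): with $\rho(X)=2$ and $f^*|_{\N^1(X)}$ diagonalizable with eigenvalues $1$ and $\delta_f$, the class $R_f=K_X-f^*K_X$ lies in $\Imm(\id-f^*)=\ker(\delta_f\id-f^*)$.

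For part (2) there is a genuine gap, and you have also misidentified where the difficulty sits. The step you pass over in one line --- ``the extremal nef class $v$ then defines, via the base-point-free theorem, an $f$-equivariant morphism $\varphi:X\to Z$'' --- is unjustified and is in fact the crux. A nef class spanning a boundary ray of $\Nef(X)$ need not be semiample, and Kawamata's base-point-free theorem would require $av-K_X$ to be nef and big for some $a>0$; writing $-K_X\equiv \pi^*B+cv$, this forces $B$ to be ample on $Y$, which fails for many projective bundles $\mathbb{P}(E)\to Y$. What statement (2) actually needs is $\kappa_f(X,R_f)>0$, i.e.\ that $R_{f^s}$ acquires sections defining a positive-dimensional Iitaka fibration for $s\gg1$; only then do Theorems \ref{thm: fiitaka} and \ref{thm: fiitaka polarized} produce $\varphi$ with $f|_Z$ being $\delta_f$-polarized. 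This positivity is the content of Theorem \ref{thm: pos} and is the hard part of the paper: assuming $\kappa_f(X,R_f^h)=0$, one shows via semistability of the sheaf of reflexive logarithmic $1$-forms along fibres (Theorem \ref{mainthm: a character of toric fibraion over k}) that the general fibre pair $(X_y,\Supp R_f^h\cap X_y)$ is toric, hence $\Supp R_f^h\cap X_y$ has $\dim X_y+\rho(X_y)>\rho(X_y)$ components; after an equivariant base change (Claim \ref{claim: base change}, using the smoothness of $\pi$ and an equivariant resolution of $\widehat Y$) this exceeds the relative Picard rank bound of Lemma \ref{lem: relative picard}, forcing a nontrivial relation and a contradiction. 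None of this appears in your sketch; your fibre-restriction argument only yields numerical nontriviality of $R_f$ (your $\beta\neq0$), which says nothing about its Iitaka dimension.

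By contrast, the step you flag as the ``main obstacle'' --- that $\dim Z<\dim X$ --- is the easy part: the nef eigenvector has numerical dimension $\dim X-\dim Y<\dim X$ (Proposition \ref{prop: numdim}), so nothing proportional to it is big; equivalently, if $R_{f^s}$ were big then $f$ would be $\delta_f$-polarized by \cite[Proposition 1.1]{MZ18}, hence so would $f|_Y$, contradicting $\delta_{f|_Y}=1$. Your concern about a divisorial contraction is therefore moot. A smaller imprecision: a general fibre is not $f$-invariant ($f$ maps $X_y$ to $X_{g(y)}$), so ``$R_{f|_F}$'' should be phrased via $f|_{X_y}:X_y\to X_{g(y)}$ or for periodic fibres.
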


A key point in the proof of Theorems \ref{mainthm: abelian} and \ref{mainthm: rho1} is the following,
which, as a relative version of \cite[Theorem 1.2]{MZ19}, provides a criterion for characterizing toric fibrations.
Here, by a \emph{Fano fibration}, we simply mean the general fibre is a Fano variety.

\begin{theorem}\label{mainthm: a character of toric fibraion over k}
    Let $f$ be a surjective endomorphism of smooth projective variety $X$.
    Let $\pi:X\to Y$ be an $f$-equivariant Fano fibration.
    Suppose there is a reduced divisor $D$ such that $K_X+D$ is $\pi$-trivial and $f^*D=qD$ for some $q>1$.
    Then $(X_y, D|_{X_y})$ is a toric pair for general $y\in Y$.
\end{theorem}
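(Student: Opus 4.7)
The plan is to reduce this relative statement to the absolute toric characterization \cite[Theorem 1.2]{MZ19} by restricting everything to the generic fibre of $\pi$ and then spreading out. Since $\pi$ is $f$-equivariant, there is a surjective $g\colon Y\to Y$ with $\pi\circ f=g\circ\pi$; surjectivity forces $g$ to fix the generic point $\eta\in Y$, so $f$ induces a surjective endomorphism $f_\eta\colon X_\eta\to X_\eta$ on the generic fibre, which is a smooth Fano variety defined over the function field $K:=\mathbf{k}(Y)$.

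Next, I would check that all hypotheses descend to the generic fibre: the restriction $D_\eta$ is reduced, $K_{X_\eta}+D_\eta\equiv 0$ (hence $\mathbb{Q}$-linearly trivial since $\mathrm{Pic}^0=0$ on a Fano), and $f_\eta^*D_\eta=qD_\eta$ with $q>1$. Base changing to an algebraic closure $\bar K$ of $K$, the triple $(X_{\bar\eta},D_{\bar\eta},f_{\bar\eta})$ satisfies the hypotheses of \cite[Theorem 1.2]{MZ19}, which produces a toric structure on $(X_{\bar\eta},D_{\bar\eta})$ over $\bar K$.

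To conclude, I would spread out the toric structure. The acting torus $T\acts X_{\bar\eta}$ with open dense orbit $X_{\bar\eta}\setminus D_{\bar\eta}$ is defined over a finite extension $L$ of $K$, corresponding to a finite surjective cover $Y'\to Y$. Extending these data to a dense open $U'\subset Y'$ yields a relative torus action on $X_{U'}\to U'$ with boundary $D_{U'}$, and taking the image $U\subset Y$ gives that $(X_y,D_y)$ is a toric pair for every closed $y$ in a dense open of $Y$.

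The main obstacle will be the spreading-out/descent step, namely ensuring that the relative torus action, initially available only over $\bar K$, extends to a genuine relative toric structure over a Zariski open of $Y$ whose boundary matches the restrictions of $D$. In characteristic zero this reduces to a finite \'etale base change together with a standard constructibility argument, but the compatibility has to be argued explicitly. The other steps --- the restriction of the data to the generic fibre and the application of the absolute MZ19 characterization --- are routine in comparison.
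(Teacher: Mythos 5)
There is a genuine gap at the very first step: the dynamical hypothesis does not descend to the generic fibre. The $f$-equivariance gives $\pi\circ f=g\circ\pi$ with $g\colon Y\to Y$ surjective, so $f$ maps the fibre over $y$ to the fibre over $g(y)$. On the generic point this means $f$ induces a morphism $X_\eta\to X_\eta\times_{K,\,g^\#}K$, i.e.\ a map from $X_\eta$ to the base change of $X_\eta$ along the field embedding $g^\#\colon K\hookrightarrow K$ (which is an isomorphism only when $g$ is birational, and even then the induced self-map of $X_{\bar\eta}$ is only semilinear over $\bar K$). So $X_{\bar\eta}$ does \emph{not} carry a surjective endomorphism over $\bar K$, and \cite[Theorem 1.2]{MZ19} — which is a statement about a variety over an algebraically closed field together with an honest self-map over that field — cannot be invoked. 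The same obstruction blocks the closed-fibre variant: for a general closed point $y$ one only gets $f|_{X_y}\colon X_y\to X_{g(y)}$ between two \emph{different} fibres, and one cannot in general find a dense set of $g$-periodic $y$ to produce genuine self-maps (e.g.\ $g$ could be a translation of infinite order on an abelian base). This is precisely why the statement has to be proved as a \emph{relative} version of the toric characterization rather than fibrewise.

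For comparison, the paper's proof reworks the MZ19 argument in the family: it shows that for general $y$ the sheaf $\hat{\Omega}^1_{X_y}(\log D_y)$ has $c_1\equiv 0$ and $c_2\cdot D_y^{d-2}=0$ (via the projection formula applied to the global map $f$ and the functoriality of Chern classes under $f|_{X_y}\colon X_y\to X_{g(y)}$), and that it is $D_y$-slope semistable — the key point being that the maximal destabilizing subsheaf of the restriction varies in a family over an open subset of $Y$ (\cite[Theorem 2.3.2]{HL10}), so pulling it back under $f|_{X_y}$ lands again in a fibre of the \emph{same} family and a slope comparison forces the maximal slope to vanish. Then \cite[Theorem 1.20]{GKP16} gives freeness of the log cotangent sheaf on the fibre, and the complexity bound of \cite[Theorem 4.5]{MZ19} together with \cite[Theorem 1.2]{BMSZ18} yields the toric pair. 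Your restriction of the static hypotheses ($D_\eta$ reduced, $K_{X_\eta}+D_\eta\sim_{\mathbb Q}0$) and your spreading-out step are fine, but the argument collapses without a self-map of a single fibre, so some version of this relative semistability analysis is unavoidable.
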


\subsection{Kawaguchi-Silverman conjecture} 
Let $f \colon X \to X$ be a surjective endomorphism. 
The first dynamical degree $\delta_f$ measures the geometric complexity of the dynamical system $(X,f)$. When $(X,f)$ is defined over $\overline{\bbq}$, we can also define an arithmetic complexity function pointwisely, that is, the arithmetic degree $\alpha_f(x)$ (see Definition \ref{def: alpha_f}). 
Roughly speaking, $\alpha_f(x)$ reflects the growth of the Weil height function along the (forward) $f$-orbit $$O_f(x) := \{f^n(x)\, |\, n\ge 0\}.$$ 
A conjecture of Kawaguchi and Silverman (\cite{KS16b}) comparing the dynamical and the arithmetic degrees for endomorphisms can be stated as follows.
One should notice that the original conjecture is formulated for dominant self-maps.

\begin{conjecture}[{Kawaguchi-Silverman Conjecture $=$ KSC}]\label{conj: ksc} 
    Let $f : X \to X$ be a surjective endomorphism of a projective variety $X$ defined over $\overline{\mathbb{Q}}$. 
    Let $x \in X(\overline{\mathbb{Q}})$, and suppose 
    that $O_f(x)$ is Zariski dense in $X$.
    Then the arithmetic degree at $x$ is equal to the dynamical degree of $f$, i.e.,
    $\alpha_f(x) = \delta_f$.
\end{conjecture}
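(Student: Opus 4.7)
The plan is to reduce KSC to the already-settled polarized case through the dynamical Iitaka theory developed in this paper, exactly as signalled in the abstract where KSC is established for projective bundles over abelian varieties and over smooth varieties of Picard number one. The upper bound $\alpha_f(x)\le \delta_f$ holds unconditionally by Kawaguchi--Silverman, so the task is to produce the lower bound $\alpha_f(x)\ge \delta_f$ whenever $O_f(x)$ is Zariski dense. A Zariski-dense orbit forces $f$ to be dominant and, together with the surjectivity hypothesis, makes the setup amenable to birational reductions.

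First I would run an $f$-equivariant MMP (Meng--Zhang). After replacing $f$ by an iterate and passing to an equivariant birational model, either $K_X$ becomes semi-positive enough that $\delta_f=1$ and KSC is classical (pullback heights grow at most polynomially), or we land on a Mori fibre space $\pi:X\to Y$ with $f$-equivariant structure. On a Mori fibre space I would induct on $\dim X$ and split according to whether $\delta_f=\delta_{f|_Y}$ or $\delta_f>\delta_{f|_Y}$. In the equal case, the product-type relation $\alpha_f(x)=\max\{\alpha_{f|_Y}(\pi(x)),\alpha_{f_y}(x)\}$ for Zariski-dense orbits (when available) together with the induction hypothesis on $Y$ and KSC on the Fano fibre yields the conclusion; in the concrete settings of the abstract the base $Y$ is either an abelian variety (where arithmetic degrees are fully understood by Silverman) or has Picard number one (where endomorphisms are polarized and KSC is known).

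Second, and this is where the structure theorems do the decisive work, when $\delta_f>\delta_{f|_Y}$ I would invoke Theorem~\ref{mainthm: abelian} or Theorem~\ref{mainthm: rho1} to produce an $f$-equivariant dominant rational map $\varphi:X\dashrightarrow Z$ with $0<\dim Z<\dim X$ such that $f|_Z$ is $\delta_f$-polarized. Since $\varphi$ is $f$-equivariant and dominant, $O_{f|_Z}(\varphi(x))$ is Zariski dense in $Z$, and by the theorem of Kawaguchi--Silverman and Matsuzawa for polarized endomorphisms, $\alpha_{f|_Z}(\varphi(x))=\delta_{f|_Z}=\delta_f$. The functoriality inequality $\alpha_f(x)\ge \alpha_{f|_Z}(\varphi(x))$ (proved by applying a Weil height machine to a very ample divisor pulled back from $Z$ and then using $\varphi^*f^{*n}=f|_Z^{*n}\varphi^*$) then delivers $\alpha_f(x)\ge \delta_f$, completing KSC. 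Here the Zariski-dense-orbit hypothesis in Theorem~\ref{mainthm: abelian} is automatic, and in Theorem~\ref{mainthm: rho1} the smoothness of $\pi$ and $\delta_{f|_Y}=1$ are both guaranteed once $Y$ has Picard number one and $\delta_f>1$.

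The main obstacle, and the reason the paper confines itself to the two classes above rather than asserting KSC in full generality, is the equal-degree Mori fibre case $\delta_f=\delta_{f|_Y}$: extending the approach demands affirmative answers to Questions~\ref{que: main1} and~\ref{que: main2} on arbitrary bases, an unconditional product formula for arithmetic degrees over non-abelian, higher-Picard-number bases, and KSC on the Fano fibres themselves. For projective bundles over abelian varieties or Picard-number-one bases these obstacles collapse: the base dynamics is either translation-plus-isogeny on an abelian variety or polarized, the fibres are projective spaces where KSC is known, and the structure theorems pin down the dynamical Iitaka fibration explicitly, so the strategy goes through and yields the KSC results advertised in the abstract.
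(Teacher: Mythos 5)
The statement you are asked about is Conjecture~\ref{conj: ksc} itself, which is an open conjecture: the paper does not prove it in general, and neither does your proposal. What the paper actually establishes are the special cases in Corollaries~\ref{maincor: abelian} and~\ref{maincor: pn}, and your text is essentially a strategy outline for those cases (plus a sketch of a general reduction that you yourself concede breaks down). So as a ``proof'' of the stated conjecture there is a genuine, unavoidable gap: the equal-degree Mori fibre case, the need for affirmative answers to Questions~\ref{que: main1} and~\ref{que: main2}, the singularities arising in equivariant covers, and KSC on the fibres are all open in general; no amount of repackaging closes them.

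Even restricted to the two classes the paper treats, several of your steps deviate from what actually works. First, in the case $\delta_f=\delta_{f|_Y}$ the paper does not use any fibre-wise product formula $\alpha_f(x)=\max\{\alpha_{f|_Y}(\pi(x)),\alpha_{f_y}(x)\}$ (no such formula is available in this generality); it simply invokes Lemma~\ref{lem: ksc iff}(2): if $\delta_f=\delta_g$ and KSC holds for $g$, then KSC holds for $f$, with KSC on the base coming from Theorem~\ref{thm: ksc abelian} or Theorem~\ref{thm: ksc polarized}. Second, your claim that for a $\mathbb{P}^n$-bundle over a Picard-number-one base ``$\delta_{f|_Y}=1$ is guaranteed once $\delta_f>1$'' is false: one can have $\delta_f>\delta_g>1$, and in that case Theorem~\ref{mainthm: rho1} does not apply at all; the paper instead observes that $f^*|_{\N^1(X)}$ is then similar to $\diag[\delta_f,\delta_g]$ with both eigenvalues $>1$, so $f$ is int-amplified, and concludes by Theorem~\ref{thm: ksc int-amplified}. (Smoothness of $\pi$ comes from the $\mathbb{P}^n$-bundle hypothesis, not from $\rho(Y)=1$.) Third, in the case $\delta_f>\delta_{f|_Z}$ handled by the structure theorems, your ``functoriality inequality'' $\alpha_f(x)\ge\alpha_{f|_Z}(\varphi(x))$ for a dominant rational map needs care with indeterminacy of $\varphi$ along the orbit; the paper sidesteps this by again citing Lemma~\ref{lem: ksc iff}(2) with $\delta_{f|_Z}=\delta_f$, rather than proving a pointwise inequality. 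Finally, your opening reduction via an $f$-equivariant MMP is not part of the paper's argument for the corollaries (the Fano contraction is given as the Albanese map or the bundle projection, made equivariant after iteration), and running such an MMP unconditionally would require precisely the open ingredients you list. In short: for the corollaries your outline is close in spirit but wrong in several load-bearing details, and for the conjecture as stated there is no proof to be had.
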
 

Let us digress to mention that, the arithmetic degree, or the growth of height functions along the orbit, also helps to solve other arithmetic problems on dynamics, such as the Zarski dense orbit conjecture (\cite{JSXZ24, MW24, MX24, Wa24}), and the dynamical Mordell-Lang conjecture (\cite{Xie23survey, Xie23, XY25b, XY25a}). This demonstrates the fundamentality of the arithmetic degree, and the importance of the Kawaguchi-Silverman conjecture as it helps to understand the arithmetic degree. 

The Kawaguchi-Silverman conjecture is still widely open, although some special cases were worked out over the years, for which we refer to \cite{Matz23} and \cite{MZ23a} (see also Section \ref{subsec: known_results_ksc}).

We use our structure theorems to study the Kawaguchi-Silverman conjecture, and provide further evidence supporting this conjecture. It is worthy to notice that, except for its own interest, the dynamical Iitaka theory is also established for some arithmetic problems of dynamics \cite{MZ23b}, such as the Kawaguchi-Silverman conjecture and the Zariski dense orbit conjecture.  

\begin{corollary}\label{maincor: abelian}
    KSC holds for any smooth projective variety $X$ admitting an extremal Fano contraction to an abelian variety. 
\end{corollary}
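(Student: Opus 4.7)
The plan is to reduce KSC for $(X,f)$ to the known abelian variety case and the $\delta_f$-polarized case via the structure supplied by $\pi$ and Theorem~\ref{mainthm: abelian}. After replacing $f$ by a suitable iterate (which does not affect the statement of KSC, since both $\alpha_f$ and $\delta_f$ are well-behaved under $f\rightsquigarrow f^t$), one may assume that the extremal ray defining $\pi$ is $f$-invariant, so $\pi$ is $f$-equivariant and descends to a surjective endomorphism $f|_Y$ of the abelian variety $Y$. If $\dim Y=0$, then $X$ is a smooth Fano variety of Picard number one; every surjective endomorphism of such a variety is either polarized or an automorphism with $\delta_f=1$, so KSC is already established. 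I therefore assume $\dim Y>0$.

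Fix $x\in X(\overline{\bbq})$ with $O_f(x)$ Zariski dense in $X$. The general upper bound $\alpha_f(x)\le \delta_f$ due to Matsuzawa is always available, so only the matching lower bound is needed. Suppose first that $\delta_f=\delta_{f|_Y}$. Then $O_{f|_Y}(\pi(x))$ is Zariski dense in $Y$, and KSC for abelian varieties (Silverman, Kawaguchi--Silverman) gives $\alpha_{f|_Y}(\pi(x))=\delta_{f|_Y}$. The standard inequality $\alpha_f(x)\ge \alpha_{f|_Y}(\pi(x))$, obtained by pulling back an ample class from $Y$ along the morphism $\pi$ and comparing Weil height growths, then closes this case.

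Suppose instead that $\delta_f>\delta_{f|_Y}$. Since $f$ admits a Zariski dense orbit, Theorem~\ref{mainthm: abelian} applies and produces an $f$-equivariant dominant rational map $\varphi\colon X\dashrightarrow Z$ with $0<\dim Z<\dim X$ on which $f|_Z$ is $\delta_f$-polarized. The image orbit $O_{f|_Z}(\varphi(x))$ is Zariski dense in $Z$ because $\varphi$ is dominant and $O_f(x)$ is Zariski dense, and KSC for polarized endomorphisms is classical, yielding $\alpha_{f|_Z}(\varphi(x))=\delta_{f|_Z}=\delta_f$. Transferring this back along $\varphi$ gives $\alpha_f(x)\ge \delta_f$, completing the argument.

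The main obstacle is the rationality of $\varphi$ in the last case: the forward orbit of $x$ may meet the indeterminacy locus, so the comparison $\alpha_f(x)\ge \alpha_{f|_Z}(\varphi(x))$ is not quite formal. I would handle this by passing to an $f$-equivariant resolution of $\varphi$ (after a further iterate of $f$ if necessary) and invoking the birational invariance of arithmetic and dynamical degrees for surjective endomorphisms; alternatively, one replaces $x$ by some $f^{n_0}(x)$ so that the subsequent orbit avoids the indeterminacy locus while retaining Zariski density, and then applies functoriality of the arithmetic degree under equivariant dominant morphisms. Either route reduces the corollary to the two previously established situations.
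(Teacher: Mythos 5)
Your proposal is correct and follows essentially the same route as the paper: split on $\delta_f=\delta_{f|_Y}$ versus $\delta_f>\delta_{f|_Y}$, use KSC for abelian varieties in the first case, and in the second apply Theorem~\ref{mainthm: abelian} (legitimately, since a point with dense orbit exists whenever KSC is non-vacuous) to descend to a $\delta_f$-polarized system. The indeterminacy issue you flag at the end is already absorbed by the paper's citation of Lemma~\ref{lem: ksc iff}, which is stated for $f$-equivariant \emph{dominant rational} maps, so no separate resolution or orbit-truncation argument is needed.
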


\begin{corollary}\label{maincor: pn}
    KSC holds for any $\mathbb{P}^n$-bundle over either a $Q$-abelian variety or a smooth projective variety of Picard number one. 
\end{corollary}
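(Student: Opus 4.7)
The plan is to split the proof on the base $Y$ of the $\mathbb{P}^n$-bundle $\pi\colon X\to Y$ and in each case reduce to the structure results already established in the paper.

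For $Y$ a $Q$-abelian variety, I would choose a finite quasi-\'etale cover $A\to Y$ with $A$ an abelian variety, form the base change $X_A:=X\times_Y A$, and lift $f$ to $f_A\colon X_A\to X_A$ after replacing $f$ by a suitable iterate: an iterate of $f|_Y$ lifts to $A$ since only finitely many lifts exist through the finite \'etale quotient $A\to Y$, and this lift pairs with $f$ via the fibre-product description. The projection $X_A\to A$ is a $\mathbb{P}^n$-bundle over an abelian variety, hence an extremal Fano contraction onto an abelian variety, so Corollary \ref{maincor: abelian} gives KSC for $(X_A,f_A)$; the standard orbit-lifting argument then descends KSC to $(X,f)$ through the finite \'etale cover $X_A\to X$.

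For $Y$ smooth of Picard number one, after iterating $f$ the contraction $\pi$ becomes $f$-equivariant, and writing $\NS_{\mathbb{R}}(X) = \mathbb{R}\pi^*H_Y \oplus \mathbb{R}\xi$ with $\xi$ a relative hyperplane class, the pullback $f^*$ is upper triangular with diagonal entries $\delta_{f|_Y}\ge 1$ (since $\rho(Y)=1$) and the fibrewise degree $d\ge 1$, so $\delta_f=\max(\delta_{f|_Y},d)$. For $x\in X(\overline{\mathbb{Q}})$ with Zariski-dense $f$-orbit, the argument then divides into four cases. If $\delta_f = \delta_{f|_Y} > 1$ then $f|_Y$ is polarized, KSC on $Y$ gives $\alpha_{f|_Y}(\pi(x))=\delta_{f|_Y}$, and the functorial bound $\alpha_f(x)\ge \alpha_{f|_Y}(\pi(x))$ yields equality. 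If $\delta_f=\delta_{f|_Y}=1$ then the universal lower bound $\alpha_f(x)\ge 1$ forces $\alpha_f(x)=1=\delta_f$. If $\delta_f > \delta_{f|_Y}=1$, the hypothesis of Theorem \ref{mainthm: rho1} is satisfied, producing an $f$-equivariant rational map $\varphi\colon X\dashrightarrow Z$ with $f|_Z$ a $\delta_f$-polarized endomorphism; KSC for polarized endomorphisms applied to $f|_Z$ combined with the arithmetic-degree descent along $\varphi$ (after resolving indeterminacy and iterating to preserve the dense orbit) gives $\alpha_f(x)\ge\delta_f$. Finally, if $\delta_f>\delta_{f|_Y}>1$, the explicit two-dimensional spectral picture shows that a suitable combination $\xi+N\pi^*H_Y$ with $N=a/(d-\delta_{f|_Y})$ (where $a$ is the off-diagonal entry of $f^*$) is an $f^*$-eigenvector with eigenvalue $\delta_f$ lying in the ample cone after the positivity of the bundle is accounted for, realising $f$ as effectively polarized and reducing to the polarized case.

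The main obstacle I anticipate is the last subcase $\delta_f>\delta_{f|_Y}>1$, which lies outside the scope of Theorem \ref{mainthm: rho1} and cannot be handled by pure descent from $Y$: the technical heart is to leverage the $\mathbb{P}^n$-bundle structure to exhibit an ample $f^*$-eigenvector with eigenvalue $\delta_f$, which requires careful use of the positivity of the defining vector bundle together with the specific upper-triangular form of $f^*$ on $\NS_{\mathbb{R}}(X)$. A secondary technicality is the arithmetic-degree descent along the rational map $\varphi$ in the Theorem \ref{mainthm: rho1} subcase, which requires a standard indeterminacy-resolution and orbit-lifting argument.
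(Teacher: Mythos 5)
Your overall architecture agrees with the paper's: the $Q$-abelian case is handled by base change to an abelian cover (the paper uses the Albanese closure of $Y$ so that an iterate of $f|_Y$ lifts, then invokes Lemma \ref{lem: ksc iff}(1) for the generically finite projection $X\times_Y A\to X$ and concludes by Corollary \ref{maincor: abelian}), and the subcases $\delta_f=\delta_{f|_Y}$ and $\delta_f>\delta_{f|_Y}=1$ over a Picard-number-one base are treated exactly as you describe, via polarization of $f|_Y$ and via Theorem \ref{mainthm: rho1} respectively (note that $\pi$ is automatically a \emph{smooth} extremal Fano contraction here, so the hypotheses of that theorem are met).

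The genuine gap is the subcase $\delta_f>\delta_{f|_Y}>1$, which you flag as the ``technical heart'' but whose proposed resolution cannot succeed. The class $D=\xi+N\pi^*H_Y$ with $N=a/(d-\delta_{f|_Y})$ is indeed the $\delta_f$-eigenvector of $f^*|_{\N^1(X)}$, and it is nef and $\pi$-ample, but it is never ample: by Proposition \ref{prop: numdim} (or Theorem \ref{thm: cone}) its numerical dimension is $\dim X-\dim Y<\dim X$, so $D^{\dim X}=0$ and $D$ is not even big. No positivity assumption on the defining vector bundle can repair this, since $f^*|_{\N^1(X)}\sim\diag[\delta_f,\delta_{f|_Y}]$ with two distinct eigenvalues is incompatible with $f$ being polarized (Definition \ref{def-endo}(1) forces all eigenvalues to have the same modulus). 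The paper closes this case by a different mechanism: since both eigenvalues of $f^*|_{\N^1(X)}$ exceed $1$, the endomorphism $f$ is int-amplified by \cite[Theorem 1.1]{Men20}, and KSC for int-amplified endomorphisms of $\mathbb{Q}$-factorial klt varieties (Theorem \ref{thm: ksc int-amplified}, from \cite{MZg24}) applies directly to the smooth variety $X$. Substituting this step for your eigenvector argument completes the proof.
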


Corollary \ref{maincor: pn} generalizes \cite[Theorem 4.1]{LM21}, where the authors proved the case of projective bundles on smooth Fano varieties of Picard number one. Either Corollary \ref{maincor: abelian} or Corollary \ref{maincor: pn} generalizes \cite[Theorem 1.4]{NZ23}, where the authors proved the case of projective bundles on elliptic curves. Furthermore, compared with the proof of \cite{NZ23} using the classification of vector bundles on elliptic curves, our argument is conceptual.

\subsection{New ingredients and comments} 
As we mentioned, one motivation to develop the dynamical Iitaka theory is to solve the Kawaguchi-Silverman conjecture. 
In view of Lemma \ref{lem: ksc iff}, it is natural to consider the case $\delta_f > \delta_{f|_Y}$.

This paper based on and developed the ideas of \cite{MZ23b}. There are some new ingredients on dealing with an equivariant extremal Fano contraction $(X,f) \xrightarrow{\pi} (Y,g)$ with $\delta_f > \delta_g$. The first one is Theorem \ref{mainthm: a character of toric fibraion over k}. To put it simply, we show that if $\pi$ is an extremal Fano contraction with $\Supp R_f$ $f^{-1}$-invariant, then $(X_y,\Supp R_f \cap X_y)$ is a toric pair for a general fibre $X_y$ of $\pi$. There, we carefully deal the relative version of semistability of sheaves of reflexive logarithmic $1$-form. This result will play a key role showing the positivity of  $\kappa_f(X,R_f)$ as stated in Theorem \ref{thm: pos}. Another one is the decompositions of cones as stated in Theorem \ref{thm: cone}. Under certain natural conditions, we show the decompositions
\[ 
\begin{aligned}
    \Nef(X) &= \pi^{\ast}\Nef(Y) \oplus \mathbb{R}_{\geq 0} D \\ 
    \textup{PE}(X) &= \pi^{\ast}\textup{PE}(Y) \oplus \mathbb{R}_{\geq 0} D
\end{aligned} 
\] 
of the nef cone and the pseudo-effective cone, where $D$ is a $\delta_f$-eigenvector of $f^*|_{\N^1(X)}$. With this, we can further show that $R_f$ is proportional to this $D$, and is not big (see Section \ref{sec: proofs of structure theorems}). Here two versions of the decomposition theorem are given, although each of them is sufficient for our purpose. We hope that their variants or generalizations will be also useful to other problems.


Currently, the main obstacle to generalizing the two structure theorems is the singularities of equivariant covers 
appeared in the proof of Claim \ref{claim: base change}.
We expect that the further development of Dynamical Iitaka Program will ‌contribute to‌ understand the structure of endomorphisms on projective varieties, and then help to solve the Kawaguchi-Silverman conjecture as well as other related arithmetic problems on dynamics.

\subsection{Structure of the paper} 
In Section \ref{sec: preliminaries}, 
we first fix notations which are used throughout the paper. 
Then we review the theory of sheaves of reflexive logarithmic $1$-form, which is the main tool to establish Theorem \ref{mainthm: a character of toric fibraion over k} on characterization of toric pairs. 
We also recall the notions of dynamical and arithmetic degrees, together with some basic facts and known results. In Section \ref{sec: dynamical iitaka}, we review the dynamical Iitaka theory. 

In Section \ref{sec: decomposition}, we present the decomposition theorem of nef and pseudo-effective cones (Theorem \ref{thm: cone}). The proof of Theorem \ref{mainthm: a character of toric fibraion over k} is given in Section \ref{sec: character of toric fibration}. After studying the positivity of ramification divisors in Section \ref{sec: positivity of ramification divisor}, we devote the proofs of two structure theorems (Theorems \ref{mainthm: abelian} and \ref{mainthm: rho1}) in Section \ref{sec: proofs of structure theorems}. The applications to the Kawaguchi-Silverman conjecture (Corollaries \ref{maincor: abelian} and \ref{maincor: pn} ) are given in Section \ref{sec: proofs of corollaries}.

\vskip 2mm
\noindent

\textbf{Acknowledgements.} We thank Yohsuke Matsuzawa and De-Qi Zhang for comments and suggestions to improve this paper. 



\section{Preliminaries}\label{sec: preliminaries}

\subsection{Varieties and divisors}
Throughout this paper, we will follow the following notation.
We refer to \cite{KM98} for notation of singularities.

\begin{notation}\label{not: notation}
    Let $X$ be a normal projective variety of dimension $n$.
    We use the following notation throughout this paper unless otherwise stated. 
\begin{longtable}{p{2cm} p{11cm}}

$\Pic(X)$    &the group of Cartier divisors of $X$ modulo linear equivalence~$\sim$\\
$\Pic^0(X)$    &the neutral connected component of $\Pic(X)$\\
$\NS(X)$    &$\Pic(X)/\Pic^0(X)$, the N\'eron-Severi group\\
$\N^1(X)$    &$\NS(X)\otimes_{\Z}\mathbb{R}$, the space of $\R$-Cartier divisors modulo numerical equivalence $\equiv$\\
$\N_r(X)$   &the space of $r$-cycle with $\bbr$-coefficients modulo weak numerical equivalence~$\equiv_w$\\
$\Nef(X)$   & the cone of nef classes in $\N^1(X)$\\
$\PE(X)$  & the cone of pseudo-effective classes in $\N^1(X)$\\
$\Supp D$   & the support of effective divisor $D=\sum a_i D_i$ which is $\bigcup_{a_i >0} D_i$, where the $D_i$ are prime divisors\\
$\rho(X)$    & the Picard number of $X$, which is $\dim_{\R} \N^1(X)$
\\ 
$\kappa(X, D)$    & the Iitaka dimension of a Cartier divisor $D$ on $X$ 
\\ 
$\nu(X, D)$    & the numerical dimension of a \textbf{nef} Cartier divisor $D$ on $X$, i.e., the largest integer $\nu$ such that $D^\nu\not\equiv_w 0$ (equivalent to $D^\nu \not\equiv 0$)
\end{longtable}

Here, two $r$-cycles \(Z_1\) and \(Z_2\) are said to be \textit{weakly numerically equivalent} (denoted by \(Z_1\equiv_w Z_2\)) if for any Cartier divisors \(H_1,\cdots,H_{n-r}\), one has \((Z_1-Z_2)\cdot H_1\cdots H_{n-r}=0\).
Note that two \(\mathbb{R}\)-Cartier divisors $D_1\equiv D_2$ if and only if $D_1\equiv_w D_2$; see \cite[Definition 2.2 and Lemma 2.3]{MZ18}.
\end{notation}

\subsection{Sheaf of reflexive logarithmic 1-form}
\label{subsec: forms}
\begin{definition}[Sheaf of reflexive logarithmic 1-form]
Let \(D\) be an effective reduced divisor on a normal projective variety \(X\).
Let \(U\xhookrightarrow{i} X \) be a big open subset (whose complement in \(X\) is codimension \(\geq 2\)) of the pair \((X,D)\) such that \((U,D\cap U)\) is log smooth.
We denote by $\Omega_U^1(U,D\cap U)$ the locally free sheaf of the usual logarithmic differential 1-forms.
The reflexive sheaf \(\hat{\Omega}_X^1(\textup{log}\,D)\coloneqq i_* \Omega_U^1(U,D\cap U)\) is called the 
\textit{sheaf of reflexive logarithmic 1-forms}. 
\end{definition}

The sheaf of reflexive logarithmic differential 1-forms is well-behaved under the finite pullback.

\begin{lemma}\label{lem: pullback of differential}
    Let \(f\colon X\to Y\) be a finite surjective morphism between smooth varieties.
    Let \(D_Y\) be a reduced divisor on \(Y\) and $D_X=f^{-1}(D_Y)$.
    Suppose that \(f|_{X\backslash D_X}\colon X\backslash D_X\to Y\backslash D_Y\) is \'etale.
    Then $f^*\hat{\Omega}_Y^1(\textup{log}\,D_Y)\cong\hat{\Omega}_X^1(\textup{log}\,D_X)$.
\end{lemma}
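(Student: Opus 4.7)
The plan is to work on a big open subset where both pairs are log smooth, construct the natural pullback map on log differentials there, and verify that its determinant is a unit via the Riemann--Hurwitz identity forced by the étale-outside-$D_X$ hypothesis.

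Since $X$ and $Y$ are smooth and $f$ is finite surjective, $f$ is flat by miracle flatness; in particular, $f^{\ast}\hat\Omega_Y^1(\log D_Y)$ is again reflexive. A morphism between reflexive sheaves on $X$ is determined by its restriction to any big open subset, and is an isomorphism iff that restriction is. Let $V_0\subset Y$ and $U_0\subset X$ denote the log smooth loci of $(Y,D_Y)$ and $(X,D_X)$, respectively, and set $W:=f^{-1}(V_0)\cap U_0$, which is big open in $X$ since $f$ is finite. On $W$ both sheaves restrict to the usual locally free sheaves of log differentials, so it suffices to produce a natural isomorphism there.

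On $W$ we define $\phi\colon f^{\ast}\Omega_{V_0}^1(\log D_Y|_{V_0})|_W\to\Omega_W^1(\log D_X|_W)$ by $f^{\ast}(dt)\mapsto d(f^{\ast}t)$, and by $f^{\ast}(dt/t)\mapsto d(f^{\ast}t)/f^{\ast}t$ whenever $t$ is a local equation of a component of $D_Y$. In local coordinates, if $t_1,\dots,t_k$ cut out the components of $D_Y$ through $y=f(x)$ and $s_1,\dots,s_l$ cut out the components of $D_X$ through $x$, then $f^{\ast}t_i=u_i\prod_j s_j^{a_{ij}}$ with $u_i$ a unit and $a_{ij}\ge 0$, so $d(f^{\ast}t_i)/f^{\ast}t_i=du_i/u_i+\sum_j a_{ij}\,ds_j/s_j$ is a genuine log form, and $\phi$ is well-defined.

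To see that $\phi$ is an isomorphism, we compare it with the ordinary pullback $\phi_0\colon f^{\ast}\Omega_{V_0}^1|_W\to\Omega_W^1$ via the inclusions $\Omega^1\hookrightarrow\Omega^1(\log)$. Taking determinants in the resulting commutative square yields the local identity
\[
\det\phi\cdot\prod_{i\le k}f^{\ast}t_i\;=\;\prod_{j\le l}s_j\cdot\det\phi_0.
\]
The étale hypothesis implies that every prime divisor outside $D_X$ is unramified, so the ramification is supported on $D_X$, and for each component $E$ of $D_X$ with ramification index $e_E$ one obtains the divisor identity $R_f=f^{\ast}D_Y-D_X=\sum_E(e_E-1)E$. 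Consequently $\det\phi_0$ has zero divisor exactly $R_f$, while $\divv(\prod s_j)=D_X$ and $\divv(\prod f^{\ast}t_i)=f^{\ast}D_Y$; the above identity then forces the divisor of $\det\phi$ to equal $R_f+D_X-f^{\ast}D_Y=0$, so $\det\phi$ is a unit on $W$. Hence $\phi$ is an isomorphism on $W$, which extends uniquely to the desired global isomorphism. The main technical step is the divisor identity $R_f=f^{\ast}D_Y-D_X$: this is exactly where the étale-outside-$D_X$ hypothesis enters, since any ramification off $D_X$ would give $\det\phi_0$ extra zeros that the logarithmic correction cannot absorb.
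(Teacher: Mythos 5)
Your proposal is correct. It opens exactly as the paper does: finiteness of $f$ between smooth varieties gives flatness, hence $f^{\ast}\hat\Omega_Y^1(\log D_Y)$ is again reflexive, and everything is reduced to a big open subset where both sheaves are the usual locally free log differentials. Where the paper then simply invokes \cite[Lemma 2.7]{DLB22} as a black box, you prove the key step by hand: you construct the natural pullback $\phi$ on log forms, check via $f^{\ast}t_i=u_i\prod_j s_j^{a_{ij}}$ that it is well defined, and then show $\det\phi$ is a unit by comparing with the ordinary pullback $\phi_0$ through the commutative square with the inclusions $\Omega^1\hookrightarrow\Omega^1(\log)$, whose determinants contribute $\prod_j s_j$ and $\prod_i f^{\ast}t_i$ respectively. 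The divisor bookkeeping $\divv(\det\phi)=R_f+D_X-f^{\ast}D_Y=0$ is exactly right, and you correctly isolate the identity $R_f=f^{\ast}D_Y-D_X$ as the place where the hypothesis that $f$ is \'etale off $D_X$ is used (it forces all codimension-one ramification to lie on components of $D_X$, so that the logarithmic twist absorbs precisely the zeros of the Jacobian). Your version is more self-contained and makes the mechanism transparent; the paper's version is shorter and defers the local computation to the literature. One minor point worth making explicit if you write this up: the extension of the isomorphism from $W$ to $X$ uses that both sheaves are reflexive and that $W$ is big (which holds because $f$ is finite, so $f^{-1}$ of a codimension-$\ge 2$ set has codimension $\ge 2$), and that $\det\phi_0$ vanishes to order exactly $e_E-1$ along $E$ uses characteristic zero (tameness), which is the paper's standing assumption.
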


\begin{proof}
    Note that $f$ is flat.
    By \cite[Proposition 1.8]{Har80},
    the pullback $f^*\hat{\Omega}_Y^1(\textup{log}\,D_Y)$ is again a reflexive sheaf.
    By \cite[Lemma 2.7]{DLB22}, 
    we have $f^*\hat{\Omega}_Y^1(\textup{log}\,D_Y)\cong\hat{\Omega}_X^1(\textup{log}\,D_X)$.
\end{proof}

\begin{proposition}[{\cite[Proposition 2.4]{Dol07}}]\label{prop: the second exact sequence}
    Let $X$ be a smooth variety and $D$ a normal crossing divisor on $X$.
    Suppose $Z \subset X$ is a smooth closed subvariety of $X$ intersecting transversally $D$.
    Then there is an exact sequence of locally free sheaves
    \[ 0 \to \cali/\cali^2 \to \Omega_X^1(\log D)|_Z \to \Omega_Z^1(\log D \cap Z) \to 0\]
    where $\mathcal{I}$ is the ideal sheaf of $Z$.
\end{proposition}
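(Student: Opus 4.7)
The plan is to reduce the claim to a local computation on $Z$. Since $X$ and $Z$ are smooth and $D$ is normal crossing with $D\cap Z$ still normal crossing on $Z$ (by transversality), all three sheaves in the claimed sequence are locally free, so exactness can be checked \'etale-locally at each point $p\in Z\cap D$ (outside of $D$ the statement is just the usual conormal sequence twisted by the inclusion $\Omega_X^1\subseteq\Omega_X^1(\log D)$, which is an equality there).

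First I would use the transversality of $Z$ to each stratum of $D$, together with the normal crossing hypothesis on $D$, to choose \'etale-local coordinates $x_1,\ldots,x_n$ at $p$ such that the components of $D$ through $p$ are cut out by $x_i=0$ for $1\le i\le k$ and $Z$ is cut out by $x_{k+1}=\cdots=x_{k+r}=0$, where $r=\operatorname{codim}_X Z$; transversality is exactly what allows the defining equations of $Z$ to be chosen disjoint from those of the components of $D$. In these coordinates, $\Omega_X^1(\log D)|_Z$ is free with basis $\{dx_i/x_i\}_{1\le i\le k}\cup\{dx_i\}_{k<i\le n}$, $\Omega_Z^1(\log D\cap Z)$ is free with basis $\{dx_i/x_i\}_{1\le i\le k}\cup\{dx_i\}_{k+r<i\le n}$, and $\cali/\cali^2$ is free with basis $\{x_{k+1},\ldots,x_{k+r}\}$.

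Next I would construct the two arrows canonically, independently of the choice of coordinates. The first is the composition $\cali/\cali^2\to\Omega_X^1|_Z\hookrightarrow\Omega_X^1(\log D)|_Z$ coming from the standard conormal sequence of the smooth closed immersion $Z\hookrightarrow X$ together with the natural inclusion $\Omega_X^1\subseteq\Omega_X^1(\log D)$; the second is the pullback of forms along $Z\hookrightarrow X$ restricted to logarithmic $1$-forms. In the local bases, the first map sends $x_i\mapsto dx_i|_Z$, while the second sends $dx_i/x_i\mapsto dx_i/x_i$ for $i\le k$, $dx_i\mapsto 0$ for $k<i\le k+r$, and $dx_i\mapsto dx_i$ for $k+r<i\le n$. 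Exactness in the middle is then immediate, since the kernel of the second map is the span of $dx_{k+1},\ldots,dx_{k+r}$, which coincides with the image of the first; injectivity of the first map follows from injectivity of the usual conormal map composed with $\Omega_X^1\hookrightarrow\Omega_X^1(\log D)$.

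The main technical obstacle is verifying that the second arrow $\Omega_X^1(\log D)|_Z\to\Omega_Z^1(\log D\cap Z)$ is well-defined globally, i.e., that restricting a log $1$-form on $(X,D)$ produces a log form on $(Z,D\cap Z)$ with poles controlled by $D\cap Z$. This is precisely where the transversality hypothesis is essential: it ensures that no component $D_i$ of $D$ contains $Z$, so for each $i\le k$ the restriction $x_i|_Z$ is a local equation for the component $D_i\cap Z$ of $D\cap Z$, and therefore $d(x_i|_Z)/(x_i|_Z)$ genuinely lies in $\Omega_Z^1(\log D\cap Z)$. Once this canonical pullback is in place, the local model above patches to yield the desired short exact sequence globally on $Z$.
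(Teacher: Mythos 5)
The paper does not prove this proposition; it is quoted directly from Dolgachev \cite[Proposition 2.4]{Dol07}, whose argument is exactly the local-coordinate computation you give. Your proof is correct: the transversality hypothesis yields the étale-local normal form in which the branches of $D$ and the defining equations of $Z$ are disjoint coordinates, the two canonical arrows (conormal map composed with $\Omega_X^1\hookrightarrow\Omega_X^1(\log D)$, and functorial restriction of log forms) have the bases you describe, and exactness is then read off termwise, with the only global point being the well-definedness of the restriction map, which you correctly reduce to the fact that no component of $D$ contains $Z$.
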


\begin{definition}[Toric varieties]
    A normal variety $X$ of dimension $n$ is a \textit{toric variety} if $X$ contains a {\it big torus} $T=(\mathbf{k}^*)^n$ as an (affine) open dense subset such that the natural multiplication action of $T$ on itself extends to an action on the whole variety. 
    In this case, let $B\coloneqq X\backslash T$, which is a divisor; 
    the pair $(X,B)$ is said to be a \textit{toric pair}. 
\end{definition}

\begin{remark}
    Given a toric pair \((X,D)\), it is known that 
    $$\hat\Omega_X^1(\textup{log}\,D)\cong {\OO}_X^{\oplus \dim X}$$ 
    and $K_X+D\sim 0$ (see e.g. \cite[Remark 4.6]{MZ19}).
    In particular, if $X$ is $\bbq$-factorial, the divisor $D$ has exact $\dim X + \rho(X)$ components.
\end{remark}

\subsection{Endomorphisms and dynamical invariants}

\begin{definition}\label{def-endo}
Let \(f\colon X\to X\) be a surjective endomorphism of a projective variety \(X\). 
\begin{enumerate}
\item We say that \(f\) is \textit{$q$-polarized} if $f^*H\sim qH$ for some ample Cartier divisor $H$ and integer $q>1$, or equivalently,  \(f^*|_{\textup{N}^1(X)}\) is diagonalizable with all the eigenvalues being of modulus \(q\) (see \cite[Proposition 2.9]{MZ18}).
\item We say that  $f$ is \textit{int-amplified} if $f^*L-L$ is ample for some  ample  Cartier divisor $L$, or equivalently, all the eigenvalues of \(f^*|_{\textup{N}^1(X)}\) are of modulus greater than 1 (see \cite[Theorems 1.1 and 3.3]{Men20}). 
Clearly, every polarized endomorphism is int-amplified.
\item A subset $D\subseteq X$ is said to be {\it $f^{-1}$-invariant} if $f^{-1}(D)=D$. 
\end{enumerate}
\end{definition}

We refer to \cite{MZ23a} for a quick survey of the recent progress on polarized and int-amplified endomorphisms.

\begin{definition}[First dynamical degree]\label{def: delta_f}
Consider a projective variety \(X\) and a surjective endomorphism $f:X\to X$.
The \textit{first dynamical degree} \(\delta_f\) of \(f\) is defined to be the following limit
\[
\delta_f\coloneqq\lim_{n\to\infty}((f^n)^*H\cdot H^{\dim X-1})^{1/n}\in\mathbb{R}_{\geq 1},
\]
where $H$ is an ample Cartier divisor.
It is known that the limit always exists and is independent of the choice of $H$  (see \cite{DS04,DS05}; cf.~\cite{Dan20}). 
It is also known that the first dynamical degree \(\delta_f\) coincides with the spectral radius of the induced linear operation \(f^*|_{\textup{N}^1(X)}\).
Note that \(\delta_{f^s}=(\delta_f)^s\) for any positive integer $s$, just because of the existence of the limit defining $\delta_f$. 
Note also that, for an $f$-equivariant dominant rational map $\pi\colon X\dashrightarrow Y$, we always have $\delta_f \geq \delta_{f|_Y}$, and the equality holds if $\pi$ is generically finite (see, for instance, \cite[Lemma 2.4]{MZ22}).
\end{definition}

\begin{lemma}\label{lem: integral}
    Let $f$ be a surjective endomorphism of a normal projective variety $X$.
    Let $\pi:X\to Y$ be an $f$-equivariant surjective morphism to a normal projective variety $Y$ with $\rho(X)=\rho(Y)+1$ and $\delta_f>\delta_{f|_Y}$.
    Then $\delta_f$ is an integer, and there exists a nef and $\pi$-ample Cartier divisor $D$ on $X$ such that $f^*D\sim_{\Q} \delta_f D$.
\end{lemma}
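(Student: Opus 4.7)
The overall strategy is to exploit the codimension-one $f^*$-invariant flag $0 \subsetneq V \subsetneq \N^1(X)$, where $V := \pi^*\N^1(Y)$. Since $\pi$ is a surjective morphism of normal projective varieties, $\pi^*$ is injective on $\N^1$, so $\dim_{\mathbb{R}} \N^1(X)/V = 1$; the $f$-equivariance of $\pi$ makes $V$ stable under $f^*$, and the induced action on the one-dimensional rational quotient $\N^1(X)/V$ is multiplication by some $\lambda \in \mathbb{Q}$. The characteristic polynomial of $f^*|_{\N^1(X)}$ factors as the product of those on $V$ and on the quotient; since the spectral radius of $f^*|_V = (f|_Y)^*$ is $\delta_{f|_Y} < \delta_f$, the spectral radius of $f^*$ must be realized on the quotient, giving $|\lambda| = \delta_f$. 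To fix the sign, fix an ample Cartier divisor $H$ and let $\bar H$ denote its image in $\N^1(X)/V$. The restriction to a general fibre $F$ factors through an injection $\bar r \colon \N^1(X)/V \hookrightarrow \N^1(F)$ sending $\bar H$ to the ample class $[H|_F]$; since nef classes on $X$ restrict to nef classes on $F$, $\bar r$ sends the projection of any nef class into $\mathbb{R}_{\ge 0}[H|_F]$. In particular $\lambda \bar H$, being the projection of the nef class $f^*H$, is a non-negative multiple of $\bar H$, forcing $\lambda \ge 0$ and thus $\lambda = \delta_f$. Since $\lambda$ is a rational eigenvalue and $\delta_f$ is an algebraic integer (eigenvalue of the integer matrix representing $f^*$ on $\NS(X)/\torsion$), we conclude $\delta_f \in \mathbb{Z}$.

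To produce $D$, I use a Perron--Frobenius-type argument. The eigenvalue $\delta_f$ of $f^*|_{\N^1(X)}$ is simple and strictly dominates the remaining eigenvalues (all lying on $V$ with modulus $\le \delta_{f|_Y} < \delta_f$), so $(f^n)^*H/\delta_f^n$ converges to a nonzero class $D \in \N^1(X)_{\mathbb{R}}$ satisfying $f^*D \equiv \delta_f D$. As a limit of positive rescalings of ample classes, $D$ is nef; moreover $D - H$ projects to zero in $\N^1(X)/V$, so $D - H \in V$, whence $D|_F \equiv H|_F$ is ample on every fibre $F$. By the standard criterion for relative ampleness, $D$ is $\pi$-ample. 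Since $\delta_f \in \mathbb{Q}$, the $\delta_f$-eigenspace of $f^*$ is $\mathbb{Q}$-rational, so $D$ may be taken in $\NS(X)_{\mathbb{Q}}$.

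The final step is to upgrade $f^*D \equiv \delta_f D$ to $\mathbb{Q}$-linear equivalence on an honest Cartier lift. Lift $D$ to $D_0 \in \Pic(X)\otimes \mathbb{Q}$; then $P := f^*D_0 - \delta_f D_0$ is numerically trivial and therefore lies in $\Pic^0(X)\otimes \mathbb{Q}$. I would like to find $Q \in \Pic^0(X)\otimes \mathbb{Q}$ with $(f^* - \delta_f)Q = -P$, replace $D_0$ by $D_0 + Q$, and clear denominators to obtain a genuine Cartier divisor. I expect this to be the main technical step: it reduces to showing that $\delta_f$ is not an eigenvalue of $f^*$ acting on $\Pic^0(X)\otimes \mathbb{Q}$, equivalently on the tangent space $H^1(X,\mathcal{O}_X)$ of $\Pic^0(X)$ at the origin. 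This is guaranteed by the standard dynamical-degree inequality bounding the spectral radius of $f^*$ on $H^1(X,\mathcal{O}_X)$ by $\sqrt{\delta_f}$, which is strictly less than $\delta_f$ (an integer $\ge 2$).
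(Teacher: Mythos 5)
Your argument is correct and follows essentially the same route as the paper: integrality of $\delta_f$ from the factorization of the characteristic polynomial of $f^*|_{\NS(X)}$ through the $f^*$-stable hyperplane $\pi^*\N^1(Y)$, a nef eigenvector from a Perron--Frobenius-type argument, $\pi$-ampleness from $D\notin\pi^*\N^1(Y)$ together with $\rho(X)=\rho(Y)+1$, and finally the upgrade from $f^*D\equiv\delta_f D$ to $f^*D\sim_{\Q}\delta_f D$. For that last step the paper simply cites \cite[Theorem 6.4(1)]{MMSZZ22}, whose proof is exactly the $\Pic^0$-surjectivity argument you sketch, so your ``main technical step'' is a known result rather than a gap.
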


\begin{proof} 
    By applying the Perron--Forbenius Theorem (\cite{Bir67}) to $f^*|_{\Nef(X)}$, 
    there is a nef $\bbr$-Cartier divisor $D_0 \not\equiv 0$ 
    such that $f^*D_0 \equiv \delta_fD_0$.
    Let $\chi_1(T)$, $\chi_2(T)$ be the characteristic polynomials of $f^*|_{\NS(X)}$ and $f^*|_{\NS(Y)}$ relatively.
    Then $\chi_i(T) \in \bbz[T]$. 
    Note that $\chi_1(T)=(T-\delta_f)\chi_2(T)$. 
    It follows that $\delta_f \in \bbz$. 
 
    Now we may choose $D_0$ to be $\mathbb{Q}$-Cartier. 
    By \cite[Theorem 6.4 (1)]{MMSZZ22}, 
    there exists a  $\mathbb{Q}$-Cartier divisor $D \equiv D_0$ such that $f^*D\sim_{\Q} \delta_f D$ because $\delta_f>1$. 
    After multiplication, we may further require $D$ to be Cartier. 
    Since $\delta_f > \delta_{f|_Y}$, we know $D \notin \pi^{\ast}\N^1(Y)$. 
    Let $H$ be an ample Cartier divisor on $X$.
    Note that $\rho(X)=\rho(Y)+1$ and $D\not\in \pi^*\N^1(Y)$.
    Then $H-aD\in \pi^*\N^1(Y)$ for some $a>0$.
    So we have $D$ being $\pi$-ample.  
\end{proof}

\begin{definition}[Arithmetic degree]\label{def: alpha_f}
Consider a projective variety \(X\) over \(\overline{\mathbb{Q}}\) and a surjective endomorphism $f:X\to X$.
\begin{enumerate}
\item 
For  \(D\in\textup{N}^1(X)\) on \(X\), there is a height function \(h_D\colon X(\overline{\mathbb{Q}})\to\mathbb{R}\) associated to \(D\), satisfying certain geometric and arithmetic properties, which measures the arithmetic complexity of \(\overline{\mathbb{Q}}\)-points. 
Such a function is uniquely determined up to a bounded function. 
We refer to \cite[Part B]{HS00} (cf.~
\cite[Section 1.1]{Kaw06}) for an introduction to
Weil’s height theory. 
\item Let \(h_H\geq 1\) be an absolute logarithmic Weil height function associated with an ample Cartier divisor \(H\).
Then for every \(x\in X(\overline{\mathbb{Q}})\), we define the \emph{arithmetic degree of \(f\) at \(x\)} by
\[
\alpha_f(x)=\lim_{n\to\infty}h_H(f^n(x))^{1/n}\in\mathbb{R}_{\geq 1}.
\]
It is known that the limit always exists and is also independent of the choice of the ample Cartier divisor and the Weil height function (see \cite[Proposition 12]{KS16a}, \cite[Theorem 3 (a)]{KS16b} for details). Note that \(\alpha_{f^s(x)}=\alpha_f(x)^s\) for any positive integer $s$, just because of the existence of the limit defining $\alpha_f$. Note also that the fundamental inequality \(\alpha_f(x)\leq \delta_f\) holds due to Kawaguchi-Silverman \cite{KS16a} and Matsuzawa \cite{Mat20b}.
\end{enumerate}
\end{definition}

\subsection{Known results on Kawaguchi-Silverman conjecture}\label{subsec: known_results_ksc}
We recall several facts and known results that will be used in this paper.
For more detailed literature review, we refer to \cite{MZ23a} and \cite{Matz23}.

The following lemma is straightforward.
\begin{lemma}\label{lem: ksc iteration}
    Let $f$ be a surjective endomorphism of a projective variety $X$.
    Then KSC holds for $f$ if and only if KSC holds for $f^s$ for some $s>0$.
\end{lemma}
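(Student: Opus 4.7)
The plan is to reduce both implications to the scaling relations $\delta_{f^s} = \delta_f^s$ and $\alpha_{f^s}(x) = \alpha_f(x)^s$ noted in Definitions \ref{def: delta_f} and \ref{def: alpha_f}, together with an observation that Zariski density of orbits behaves well under passage to iterates.

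For the forward direction, I would assume KSC holds for $f$ and take a point $x \in X(\overline{\mathbb{Q}})$ with $O_{f^s}(x)$ Zariski dense. Since $O_{f^s}(x) \subseteq O_f(x)$, the orbit $O_f(x)$ is also Zariski dense, so $\alpha_f(x) = \delta_f$ by hypothesis. Raising both sides to the $s$-th power gives $\alpha_{f^s}(x) = \delta_{f^s}$.

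For the backward direction, suppose KSC holds for $f^s$ and let $x \in X(\overline{\mathbb{Q}})$ have Zariski dense $f$-orbit. The key observation is the decomposition
\[ O_f(x) = \bigcup_{i=0}^{s-1} O_{f^s}(f^i(x)), \]
so taking Zariski closures and using the irreducibility of $X$ (recall varieties are assumed irreducible), at least one of the closed sets $\overline{O_{f^s}(f^i(x))}$ must equal $X$. Pick such an index $i$; then KSC for $f^s$ yields $\alpha_{f^s}(f^i(x)) = \delta_{f^s} = \delta_f^s$, hence $\alpha_f(f^i(x)) = \delta_f$. Finally, I would use that $\alpha_f$ is invariant along the $f$-orbit, which follows immediately from its definition as a limit: shifting the orbit by finitely many iterates only changes $h_H(f^n(\cdot))$ up to a bounded multiplicative adjustment that disappears after taking $n$-th roots. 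Thus $\alpha_f(x) = \alpha_f(f^i(x)) = \delta_f$, completing the proof.

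There is no real obstacle here; the only mildly non-formal point is the irreducibility argument in the backward direction, which is needed to extract one Zariski dense sub-orbit from the finite union, and the invariance $\alpha_f(x) = \alpha_f(f^i(x))$, both of which are standard.
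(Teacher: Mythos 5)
Your proof is correct; the paper itself gives no argument for this lemma (it is labelled ``straightforward''), and what you write is exactly the standard argument one is expected to supply: the scaling relations $\delta_{f^s}=\delta_f^s$, $\alpha_{f^s}(x)=\alpha_f(x)^s$, the decomposition $O_f(x)=\bigcup_{i=0}^{s-1}O_{f^s}(f^i(x))$ with irreducibility of $X$, and the orbit-invariance $\alpha_f(f^i(x))=\alpha_f(x)$. The only quibble is your phrase about a ``bounded multiplicative adjustment,'' which is not quite the right justification for that invariance --- the clean reason is simply that the defining limit exists and $h_H\ge 1$, so $\lim_n h_H(f^{n+i}(x))^{1/n}=\lim_m h_H(f^m(x))^{(1/m)\cdot(m/(m-i))}=\alpha_f(x)$.
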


The subsequent lemma facilitates the reduction of the Kawaguchi-Silverman conjecture (KSC, Conjecture \ref{conj: ksc}).

\begin{lemma}[{\cite[Lemma 2.5]{MZ22}}]\label{lem: ksc iff}
Consider the equivariant dynamical systems
$$\xymatrix{
f \acts X \ar@{-->}[r]^{\pi} &Y\racts g
}$$
of normal projective varieties with $\pi$ a dominant rational map.
Then the following hold.
\begin{itemize}
\item[(1)] Suppose $\pi$ is generically finite.
Then KSC holds for $f$ if and only if KSC holds for $g$.
\item[(2)] Suppose $\delta_f=\delta_g$ and KSC holds for $g$.
Then KSC holds for $f$.
\end{itemize}
\end{lemma}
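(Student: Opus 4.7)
The plan is to reduce both statements to two functoriality principles for an equivariant dominant (rational) map $\pi\colon (X,f)\dashrightarrow (Y,g)$. After an equivariant resolution of indeterminacy (which does not affect $\delta$ or $\alpha$), $\pi$ may be treated as a surjective morphism, and one has
\[
\delta_f\ \ge\ \delta_g,\qquad \alpha_f(x)\ \ge\ \alpha_g(\pi(x))
\]
for every $x\in X(\overline{\bbq})$, with both inequalities becoming equalities when $\pi$ is generically finite. The first is standard (Definition \ref{def: delta_f}); the second is a direct consequence of the Weil height machine via $h_{\pi^*H_Y}(x)=h_{H_Y}(\pi(x))+O(1)$ and the comparability between $\pi^*H_Y$ and an ample divisor on $X$, where generic finiteness supplies the reverse bound by making $\pi^*H_Y$ big. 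I would begin by recording these two pullback formulas as the backbone of the proof.

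Part (2) is then immediate. Given $x\in X(\overline{\bbq})$ with Zariski dense $O_f(x)$, the image orbit $O_g(\pi(x))=\pi(O_f(x))$ is dense in $Y$, so KSC for $g$ at $\pi(x)$ gives $\alpha_g(\pi(x))=\delta_g$. Chaining the functorial inequality, the hypothesis $\delta_f=\delta_g$, and the fundamental bound $\alpha_f(x)\le \delta_f$ yields
\[
\delta_g\ =\ \alpha_g(\pi(x))\ \le\ \alpha_f(x)\ \le\ \delta_f\ =\ \delta_g,
\]
so $\alpha_f(x)=\delta_f$ and KSC holds for $f$.

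For part (1), the implication KSC$(g)\Rightarrow$KSC$(f)$ is a special case of (2), since generic finiteness forces $\delta_f=\delta_g$. For the reverse direction, I would take $y\in Y(\overline{\bbq})$ with $O_g(y)$ dense and aim to produce $x\in\pi^{-1}(O_g(y))$ whose own $f$-orbit is Zariski dense in $X$; once such $x$ is found, the equality $\alpha_f(x)=\alpha_g(\pi(x))$ in the generically finite case, together with invariance of $\alpha_g$ along orbits, gives $\alpha_g(y)=\alpha_f(x)=\delta_f=\delta_g$, as required. To construct $x$, one considers the finitely many preimages $x_1,\dots,x_d$ of $y$ and the union $Z=\bigcup_i\overline{O_f(x_i)}$ of their orbit closures, which is forward-invariant; after replacing $f$ by an iterate so that each irreducible component of $Z$ is $f$-invariant, irreducibility of $X$ combined with the finiteness of $\pi$ forces at least one such component to coincide with $X$, and the corresponding $x_i$ has the required Zariski dense orbit.

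The main obstacle is precisely this lifting of a Zariski dense $g$-orbit to a Zariski dense $f$-orbit in the hard direction of (1). The delicate point is that a priori a dense $O_g(y)$ only yields density of the union $\bigcup_i\overline{O_f(x_i)}$, not of any individual $\overline{O_f(x_i)}$; careful use of iteration to invariantize irreducible components, together with the generic finiteness of $\pi$, is what makes the argument go through. Apart from this bookkeeping and the standard but nontrivial pullback formula for Weil heights, the proof is a direct application of height machinery.
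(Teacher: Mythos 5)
The paper does not prove this lemma; it is quoted verbatim from \cite[Lemma 2.5]{MZ22}, and your argument is essentially the standard one given there (reduce to a morphism via the equivariant graph, use the height inequality $\alpha_f(x)\ge\alpha_g(\pi(x))$ and the fundamental bound $\alpha_f(x)\le\delta_f$ for part (2), and lift a dense $g$-orbit to a dense $f$-orbit by a dimension count for part (1)). Two small points. First, in part (1) your invariantization of the components of $Z$ is unnecessary: since $\pi(Z)$ contains the dense set $O_g(y)$ and $\pi$ is generically finite, some $\overline{O_f(x_i)}$ already has dimension $\dim X$ and hence equals $X$ by irreducibility. Second, the step where "generic finiteness supplies the reverse bound by making $\pi^*H_Y$ big" is the genuinely nontrivial point: bigness of $\pi^*H_Y$ only gives $h_{H_X}\le m\,h_{\pi^*H_Y}+O(1)$ away from the support of the effective divisor $m\pi^*H_Y-H_X$, which a dense orbit may still meet infinitely often; the equality $\alpha_f(x)=\alpha_g(\pi(x))$ for generically finite $\pi$ therefore holds only for points with Zariski dense orbit and rests on the fact (due to Kawaguchi--Silverman and Matsuzawa) that the arithmetic degree of such an orbit can be computed with a big, rather than ample, divisor. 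You should cite that input explicitly rather than treat it as part of the routine height machine.
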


Kawaguchi-Silverman themselves proved KSC for the polarized and abelian cases. Recall that a \emph{$Q$-abelian variety} is a quasi-\'{e}tale quotient of an abelian variety. 

\begin{theorem}[{\cite[Theorem 5]{KS16a}}]\label{thm: ksc polarized}
    KSC holds for polarized endomorphisms.
\end{theorem}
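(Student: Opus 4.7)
The plan is to invoke the canonical height machinery of Call--Silverman. First, I would construct a canonical height $\hat{h}_H \colon X(\overline{\bbq}) \to \bbr$ associated to the $q$-polarized endomorphism $f$ and the ample Cartier divisor $H$ satisfying $f^{\ast}H \sim qH$. The construction is Tate's telescoping limit
\[
\hat{h}_H(x) := \lim_{n\to\infty} \frac{h_H(f^n(x))}{q^n},
\]
and the key verification is that this sequence is Cauchy: it follows by writing $h_H \circ f = q\cdot h_H + O(1)$ (by the functoriality of heights modulo bounded functions, using $f^*H \sim qH$) and summing a geometric series. The resulting $\hat{h}_H$ satisfies the functional equation $\hat{h}_H \circ f = q\cdot \hat{h}_H$ and differs from $h_H$ by a bounded function.

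Second, I would establish the Northcott-type positivity: $\hat{h}_H \geq 0$ on $X(\overline{\bbq})$, with $\hat{h}_H(x) = 0$ if and only if $x$ is $f$-preperiodic. Nonnegativity follows from the fact that $\hat{h}_H = h_H + O(1)$ with $H$ ample (so $h_H$ is bounded below on $X(\overline{\bbq})$) combined with the functional equation $\hat{h}_H(x) = q^{-n}\hat{h}_H(f^n(x))$; letting $n\to\infty$ forces $\hat{h}_H(x)\geq 0$. For the preperiodic characterization, Northcott's finiteness theorem applied to the ample $H$ shows that the set of algebraic points with bounded degree and bounded $\hat{h}_H$ is finite, so any $x$ with $\hat{h}_H(x)=0$ must have a finite forward orbit and hence be preperiodic; the converse is immediate from the functional equation.

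Third, I would combine these with the Zariski dense orbit hypothesis. Since $O_f(x)$ is Zariski dense and $X$ is positive-dimensional, the orbit is infinite, so $x$ is not preperiodic and therefore $\hat{h}_H(x) > 0$. The functional equation then gives
\[
h_H(f^n(x)) = q^n\hat{h}_H(x) + O(1),
\]
from which
\[
\alpha_f(x) = \lim_{n\to\infty} h_H(f^n(x))^{1/n} = q.
\]
Finally, for a $q$-polarized endomorphism the spectral radius of $f^{\ast}|_{\N^1(X)}$ is $q$ (by the equivalent definition of polarized in Definition \ref{def-endo}), so $\delta_f = q = \alpha_f(x)$, as required.

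The main obstacle here is not analytical depth but the careful positivity and preperiodicity statement for $\hat{h}_H$; everything else is a direct limit calculation. I would also note that the argument gives the stronger conclusion that $\alpha_f(x)=\delta_f$ holds for every non-preperiodic $x$, not only for points with Zariski dense orbit, so the Zariski density hypothesis is essentially used only to rule out preperiodicity.
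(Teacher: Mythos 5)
Your argument is correct and is essentially the proof of the cited source: the paper does not prove this statement itself but quotes it from \cite[Theorem 5]{KS16a}, whose proof is exactly this Call--Silverman/Tate canonical height construction, the positivity and preperiodicity dichotomy via Northcott, and the identification $\delta_f=q$ for a $q$-polarized endomorphism. Your closing remark that KSC holds at every non-preperiodic point is also the form in which Kawaguchi--Silverman state their result.
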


\begin{theorem}[{\cite[Corollary 32]{KS16a}, \cite[Theorem 1.2]{Sil17}, \cite[Theorem 2.8]{MZ22}}]\label{thm: ksc abelian}
    KSC holds for $Q$-abelian varieties.  
\end{theorem}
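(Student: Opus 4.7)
The plan is to reduce the $Q$-abelian case to the abelian case via the quasi-\'etale cover, and then to establish KSC on abelian varieties using Call--Silverman/N\'eron--Tate canonical height theory. For the reduction, let $X = A/G$ be a $Q$-abelian variety with $\pi\colon A\to X$ the quasi-\'etale quotient by the finite group $G$ (enlarge if needed to make $\pi$ Galois). Given $f\colon X\to X$, form the normalized fiber product $A\times_{X,f}X$ and its Galois closure over $X$. A standard lifting argument, after passing to an iterate $f^s$ (permissible by Lemma \ref{lem: ksc iteration}), yields an endomorphism $\tilde{f}\colon A\to A$ satisfying $\pi\circ\tilde{f}=f^s\circ\pi$. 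Since $\pi$ is generically finite, Lemma \ref{lem: ksc iff}(1) reduces KSC for $f$ to KSC for $\tilde{f}$ on the abelian variety $A$.

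For the abelian case, decompose $\tilde{f}=T_c\circ\phi$ into an isogeny $\phi\colon A\to A$ and a translation $T_c$ by some $c\in A(\overline{\mathbb{Q}})$. Then $\delta_{\tilde{f}}$ equals the spectral radius of $\phi^*$ on $\N^1(A)_{\mathbb{R}}$, and the Rosati-type involution on $\mathrm{End}(A)_{\mathbb{R}}$ makes $\phi^*|_{\N^1(A)_{\mathbb{R}}}$ semisimple. For each real eigen-divisor $D_\lambda$ of $\phi^*$ with eigenvalue $\lambda$, the N\'eron--Tate construction yields a canonical height $\hat h_{D_\lambda}\colon A(\overline{\mathbb{Q}})\to\mathbb{R}$ satisfying
\[
   \hat h_{D_\lambda}(\phi(x)) = \lambda\,\hat h_{D_\lambda}(x),\qquad \hat h_{D_\lambda} = h_{D_\lambda} + O(1).
\]
Writing an ample height $h_H$ as a linear combination of such canonical heights plus bounded error and tracking the effect of the translation $T_c$ on a torsion-free component gives an explicit formula for $h_H(\tilde{f}^n(x))$; extracting the $n$-th root then shows $\alpha_{\tilde{f}}(x)\in\{|\lambda|\colon\lambda\in\mathrm{Spec}(\phi^*|_{\N^1(A)_{\mathbb{R}}})\}$.

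The final and most delicate step is to promote this to the equality $\alpha_{\tilde{f}}(x)=\delta_{\tilde{f}}$ whenever $O_{\tilde{f}}(x)$ is Zariski dense. If $\alpha_{\tilde{f}}(x)<\delta_{\tilde{f}}$, then the sum of canonical heights attached to eigenvalues of modulus $\delta_{\tilde{f}}$ must vanish along the orbit; as these canonical heights are non-negative quadratic forms on suitable quotients of $A$, their joint vanishing locus is a translate of a proper abelian subvariety $B\subsetneq A$, which is $\tilde{f}^s$-invariant for some $s>0$. The orbit $O_{\tilde{f}}(x)$ thus lies in this proper invariant subvariety, contradicting Zariski density. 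The main obstacle is this last implication: carefully showing that the vanishing locus of the relevant combination of canonical heights is genuinely contained in a translate of a proper abelian subvariety, which requires handling complex eigenvalues of the same modulus and the torsion/non-torsion dichotomy of the translation part $c$, as in the detailed analyses of \cite{KS16a,Sil17,MZ22}.
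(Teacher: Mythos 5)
The paper offers no proof of this statement---it is quoted from \cite[Corollary 32]{KS16a}, \cite[Theorem 1.2]{Sil17} and \cite[Theorem 2.8]{MZ22}---and your outline reconstructs exactly the route taken there: lift $f^s$ to the quasi-\'etale abelian cover and apply Lemma \ref{lem: ksc iff}(1) (this is the content of \cite[Theorem 2.8]{MZ22}), then treat the abelian case by canonical heights and show that $\alpha_{\tilde f}(x)<\delta_{\tilde f}$ forces the orbit into a proper invariant subvariety (this is \cite{KS16a} plus \cite{Sil17}). So the approach is the intended one, and the hardest step is correctly identified and attributed.

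One intermediate assertion is, however, false as stated: the Rosati involution does \emph{not} make $\phi^*|_{\N^1(A)_{\mathbb R}}$ semisimple. For $A=E\times E$ with $E$ non-CM and $\phi=\left(\begin{smallmatrix}1&1\\0&1\end{smallmatrix}\right)$, the induced map $\alpha\mapsto\phi^{\dagger}\alpha\phi$ on $\NS(A)_{\mathbb R}$ is unipotent but not the identity, hence has a nontrivial Jordan block; this phenomenon is precisely why \cite{KS16a} develops ``canonical heights for Jordan blocks.'' Consequently you cannot write $h_H$ as a combination of genuine eigen-heights; you must work with Jordan chains, for which the canonical heights satisfy $\hat h_{D}(\phi(x))=\lambda\hat h_{D}(x)+\hat h_{D'}(x)$ and the orbit heights grow like $n^{k}|\lambda|^{n}$. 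This does not derail the strategy---the polynomial factors disappear after taking $n$-th roots, and the Perron--Frobenius theorem still supplies a genuine \emph{nef} eigenvector for the top eigenvalue $\delta_{\tilde f}$, which is all that the final vanishing-locus argument of \cite{Sil17} needs---but the semisimplicity claim should be removed and replaced by the Jordan-block analysis.
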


Recently, Zhong and the first author proved KSC for int-amplified case.
\begin{theorem}[{\cite[Main Theorem]{MZg24}}]\label{thm: ksc int-amplified}
    KSC holds for int-amplified endomorphisms of $\mathbb{Q}$-factorial klt projective varieties.
\end{theorem}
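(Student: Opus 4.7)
The plan is to combine the equivariant minimal model program (MMP) for int-amplified endomorphisms (previously developed by the first author with De-Qi Zhang) with the two base cases of KSC already available in Section 2.4: the polarized case (Theorem \ref{thm: ksc polarized}) and the $Q$-abelian case (Theorem \ref{thm: ksc abelian}). The argument proceeds by induction on $\dim X$, using Lemma \ref{lem: ksc iteration} so that we may replace $f$ by any convenient iterate $f^s$ to arrange equivariance of the geometric constructions below.

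First, after iteration, the Albanese morphism $\alpha \colon X \to \Alb(X)$ is $f$-equivariant with connected fibres, and the induced endomorphism on $\Alb(X)$ is again int-amplified. If $\dim\Alb(X)=\dim X$ then $X$ is $Q$-abelian and Theorem \ref{thm: ksc abelian} closes the case. If $0<\dim\Alb(X)<\dim X$, then for any $x\in X(\overline{\bbq})$ with Zariski dense $f$-orbit, $\alpha(x)$ has Zariski dense $f|_{\Alb(X)}$-orbit; the key auxiliary fact (which follows from the structure theory of int-amplified endomorphisms, since the general Albanese fibre is rationally connected and carries an int-amplified endomorphism by induction) is that after passing to a suitable $f$-equivariant birational modification one has $\delta_f=\delta_{f|_{\Alb(X)}}$, so that Lemma \ref{lem: ksc iff}(2) applies.

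The main case is $\Alb(X)=0$, where $X$ is rationally connected. Here one runs an $f$-equivariant MMP. Each divisorial or flipping contraction is equivariant and birational, and Lemma \ref{lem: ksc iff}(1) preserves KSC along such steps, while the $\mathbb{Q}$-factorial klt hypothesis is preserved by the MMP. Each Mori fibre space contraction $\pi\colon X'\to Y'$ decreases dimension, so $f|_{Y'}$ satisfies KSC by induction (noting $Y'$ remains $\mathbb{Q}$-factorial klt and $f|_{Y'}$ remains int-amplified). To lift KSC from $Y'$ to $X'$, one exploits that for int-amplified $f$ on a rationally connected variety, the general fibre $F$ of $\pi$ is Fano and $f|_F$ is polarized after iteration (a structural consequence of Picard-rank-one Mori contractions plus int-amplification on the total space); then Theorem \ref{thm: ksc polarized} gives KSC fibrewise, and a height-comparison argument combining the fibrewise polarized estimate with the inductive estimate on the base yields $\alpha_f(x)=\delta_f$ for Zariski dense orbits on $X'$.

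The hardest step is not any single reduction but rather ensuring that the class of $\mathbb{Q}$-factorial klt projective varieties equipped with int-amplified endomorphisms is stable under every step of the equivariant MMP, and that the int-amplified property descends both to the base of a Mori fibre space and, in a polarized form, to the general fibre. This is where the full strength of the structural theory for int-amplified endomorphisms (preservation under pushforward and restriction, combined with eigenvalue arguments on $\N^1$) is required. Once this structural stability is in hand, the induction machine above—birational steps handled by Lemma \ref{lem: ksc iff}(1), fibrations handled by combining induction on the base with the polarized fibre case—closes the argument, and KSC follows.
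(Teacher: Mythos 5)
First, note that the paper does not prove this statement at all: it is quoted as the Main Theorem of \cite{MZg24}, so there is no internal proof to compare against, and your proposal has to stand on its own as a proof of that external theorem. It does not. The first genuine gap is your ``key auxiliary fact'' that after a suitable $f$-equivariant birational modification one may arrange $\delta_f=\delta_{f|_{\Alb(X)}}$. This is false and cannot be repaired: birational modifications change neither $\delta_f$ nor the Albanese, and already $X=E\times\mathbb{P}^1$ with $f=[2]\times h$, where $h$ has degree $9$, is int-amplified with $\delta_f=9>2=\delta_{f|_{\Alb(X)}}$. So Lemma \ref{lem: ksc iff}(2) is not available along the Albanese map, and your reduction of the positive-irregularity case collapses.

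The second gap is the Mori fibre space step. A surjective endomorphism does not restrict to a general fibre $F$ of $\pi\colon X'\to Y'$ (it maps the fibre over $y$ to the fibre over $f|_{Y'}(y)$; only periodic fibres carry an honest self-map), so ``$f|_F$ is polarized, hence KSC holds fibrewise'' is not meaningful for the points of interest: a point with Zariski dense orbit in $X'$ is not confined to any fibre. More seriously, even granting polarized behaviour in the fibre direction, there is no routine ``height-comparison argument'' that combines KSC on the base with fibrewise information to conclude $\alpha_f(x)=\delta_f$ when $\delta_f>\delta_{f|_{Y'}}$; this is exactly the hard case, and it is the reason the present paper proves Theorems \ref{mainthm: abelian} and \ref{mainthm: rho1} (producing an $f$-equivariant dominant rational map to a positive-dimensional $Z$ with $f|_Z$ being $\delta_f$-polarized, so that Theorem \ref{thm: ksc polarized} and Lemma \ref{lem: ksc iff} apply) rather than arguing fibre by fibre. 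The actual proof in \cite{MZg24} rests on the equivariant MMP machinery for int-amplified endomorphisms together with substantial further arguments in the fibration case, none of which your sketch supplies; as written, the proposal has essential gaps at precisely the two places where the real difficulty lies.
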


\section{Dynamical Iitaka fibration}\label{sec: dynamical iitaka}

In this section, we recall the recent developed tool introduced by De-Qi Zhang and the first author \cite[Section 4]{MZ23b}.

\begin{definition}[Dynamical Iitaka dimension]\label{def: dynamical Iitaka dimension}
    Let $f$ be a surjective endomorphism of a normal projective variety $X$ and $D$ a $\Q$-Cartier divisor.
    Denote by $V_f(D)$ the subspace of $\Pic_{\Q}(X)$ spanned by $D_i\coloneqq(f^*)^i(D)$ with $i\in\Z$.
    Note that $V_f(D)$ is finite dimensional (cf.~\cite[Proposition 3.7]{MZ22}).
    We define the {\it dynamical $f$-Iitaka dimension} of $D$ as 
    $$\kappa_f(X,D)\coloneqq\max\{\kappa(X,D')\,|\,D'\in V_f(D)\}.$$
    If $D_0,\cdots, D_n$ are effective and span $V_f(D)$, 
    then $\kappa_f(X,D)=\kappa(X, \sum\limits_{i=0}^n D_i)$.
\end{definition}

\begin{lemma}\label{lem: composition formula}
    Let $f$ be a surjective endomorphism of a normal projective variety $X$.
    Then for any $s>0$, we have
    $$R_{f^s}=\sum_{i=0}^{s-1} (f^i)^*R_f.$$
    In particular, if $R_f$ is $\mathbb{Q}$-Cartier, then for $t\gg 1$, we have
    $$\kappa_f(X,R_f)=\kappa(X,R_{f^t}).$$
\end{lemma}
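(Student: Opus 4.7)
The plan is to prove the two parts in sequence, where part (1) is the main computational input and part (2) is a straightforward consequence of part (1) combined with the definition of $\kappa_f$.

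For part (1), I would first establish the chain rule
$$R_{f \circ g} \,=\, R_f + f^* R_g$$
for any two surjective endomorphisms $f, g$ of $X$. This follows by comparing the two defining identities $K_X = f^* K_X + R_f$ and $K_X = g^* K_X + R_g$ (as canonical Weil divisors, via the cokernel of $f^*\omega_X \to \omega_X$ on the smooth locus, which is big because $X$ is normal, and then extended by taking closures of codimension-one components). Applying $f^*$ to the second identity and substituting yields
$$K_X \,=\, f^*(g^* K_X + R_g) + R_f \,=\, (f \circ g)^* K_X + f^* R_g + R_f,$$
and comparing with $K_X = (f \circ g)^* K_X + R_{f \circ g}$ gives the chain rule. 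Then a direct induction on $s$, using the chain rule with $g = f^{s-1}$, gives
$$R_{f^s} \,=\, R_f + f^* R_{f^{s-1}} \,=\, R_f + f^* R_f + (f^2)^* R_{f^{s-2}} \,=\, \cdots \,=\, \sum_{i=0}^{s-1} (f^i)^* R_f.$$

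For part (2), I would exploit the finite-dimensionality of $V_f(R_f) \subseteq \Pic_{\mathbb{Q}}(X)$ (stated in the definition). Set $D_i := (f^i)^* R_f$, which is effective since $R_f$ is effective and pullback preserves effectiveness. Choose $n$ large enough so that $D_0, D_1, \ldots, D_n$ span $V_f(R_f)$. By the second assertion of Definition~3.1, we have
$$\kappa_f(X, R_f) \,=\, \kappa\Bigl(X,\, \textstyle\sum_{i=0}^{n} D_i\Bigr).$$
For any $t \geq n+1$, part (1) yields $R_{f^t} = \sum_{i=0}^{t-1} D_i \geq \sum_{i=0}^{n} D_i$ as an inequality of effective $\mathbb{Q}$-Cartier divisors. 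Monotonicity of Iitaka dimension on effective divisors (via the inclusion $H^0(X, mD) \hookrightarrow H^0(X, mD')$ for $0 \leq D \leq D'$, obtained by multiplying by a section cutting out $D' - D$) gives
$$\kappa(X, R_{f^t}) \,\geq\, \kappa\Bigl(X,\, \textstyle\sum_{i=0}^{n} D_i\Bigr) \,=\, \kappa_f(X, R_f).$$
Conversely, since $R_{f^t} = \sum_{i=0}^{t-1} D_i$ lies in $V_f(R_f)$ by construction, the maximality in the definition of $\kappa_f$ forces $\kappa(X, R_{f^t}) \leq \kappa_f(X, R_f)$. Combining the two bounds yields the desired equality.

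The main potential obstacle is the rigorous justification of the chain rule $R_{f \circ g} = R_f + f^* R_g$ on a general normal projective variety where $K_X$ may fail to be $\mathbb{Q}$-Cartier. This must be carried out at the level of Weil divisors on the smooth locus (a big open subset) and then extended by taking Zariski closures, and one must check that $f^*$ of the Weil divisor $R_g$ is unambiguously defined in the situations where this identity is used — which, for part (2), is guaranteed by the standing hypothesis that $R_f$ is $\mathbb{Q}$-Cartier, since then each $(f^i)^* R_f$ is well-defined and $\mathbb{Q}$-Cartier as well.
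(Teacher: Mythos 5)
Your proposal is correct and follows essentially the same route as the paper: both establish the chain rule $R_{g\circ f}=f^*R_g+R_f$ (the paper by comparing ramification coefficients at each prime divisor, you by subtracting the identities $K_X=f^*K_X+R_f$, which amounts to the same thing) and then induct, with part (2) being the same unwinding of the definition of $\kappa_f$. Only a cosmetic remark: since $f^*g^*=(g\circ f)^*$, your chain rule should read $R_{g\circ f}=R_f+f^*R_g$ rather than $R_{f\circ g}$, which is immaterial here because it is applied only to commuting iterates of a single map.
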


\begin{proof}
    Let $g:X\to X$ be a surjective endomorphism.
    Let $A$ be any prime divisor, $B=f(A)$ and $C=g(B)$.
    Let $b$ be the coefficient of $g^*C$ at $B$ and $a$ the coefficient of $f^*B$ at $A$.
    Then the coefficient of $R_{g\circ f}$ at $A$ is $ab-1$ and the coefficient of $f^*R_g+R_f$ at $A$ is $a(b-1)+a-1=ab-1$.
    Therefore, we have $R_{g\circ f}=f^*R_g+R_f$.
    Substituting $g=f^{s-1}$, we get the desired formula by induction.

    The second statement then just follows from the definition of dynamical Iitaka dimension.  
\end{proof}

\begin{lemma}[{\cite[Lemma 4.2]{MZ23b}}]\label{lem-fkappa>0}
    Let $f$ be a surjective endomorphism of a normal projective variety $X$.
    Let $D$ be a $\Q$-Cartier effective divisor of $X$ such that $\Supp D$ is not $f^{-1}$-periodic.
    Then $\kappa_f(X,D)>0$.
\end{lemma}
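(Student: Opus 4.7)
My approach is to argue by contrapositive: I assume $\kappa_f(X,D)=0$ and show that $\Supp D$ is $f^{-1}$-periodic. The key ingredient is a \emph{rigidity} consequence of $\kappa=0$: if $E$ is an effective $\mathbb{Q}$-Cartier divisor with $\kappa(X,E)=0$, then $E$ is the unique effective $\mathbb{Q}$-divisor in its $\mathbb{Q}$-linear equivalence class. Indeed, otherwise $h^0(X,mE)\geq 2$ for some $m$ with $mE$ integral, and two linearly independent sections $s_0,s_1$ produce a non-constant, hence transcendental, rational function $s_0/s_1\in\mathbf{k}(X)$; the monomials $s_0^{a}s_1^{\ell-a}$ then give $h^0(X,\ell mE)\geq \ell+1$, contradicting $\kappa(X,E)=0$. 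By the hypothesis $\kappa_f(X,D)=0$, this rigidity applies to every effective $\mathbb{Q}$-divisor whose class lies in $V_f(D)$.

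Next, I exploit the finite-dimensionality of $V_f(D)$. Set $D_i=(f^*)^iD$ and $d=\dim_{\mathbb{Q}}V_f(D)$. The classes $[D_0],\dots,[D_d]$ are linearly dependent, so there exist $N\leq d$ and rationals $c_0,\dots,c_{N-1}$ with $D_N\sim_{\mathbb{Q}}\sum_{i<N}c_iD_i$. Splitting this relation by sign, the effective $\mathbb{Q}$-divisors
\[
E^{+}\coloneqq D_N+\sum_{c_i<0}(-c_i)\,D_i,\qquad E^{-}\coloneqq\sum_{c_i>0}c_i\,D_i
\]
satisfy $E^{+}\sim_{\mathbb{Q}}E^{-}$ and represent classes in $V_f(D)$. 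By the rigidity step, $E^{+}=E^{-}$ as $\mathbb{Q}$-divisors, not merely as classes; comparing supports yields $\Supp D_N\subseteq S\coloneqq\bigcup_{i<N}\Supp D_i$. A straightforward induction, using $\Supp D_{k+1}=f^{-1}(\Supp D_k)$ together with the containment just proved, then shows $\Supp D_k\subseteq S$ for every $k\geq 0$.

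Since $S$ is a finite union of prime divisors, the sequence $(\Supp D_k)_{k\geq 0}$ takes values in the finite power set $2^{S}$, and pigeonhole yields $0\leq k_1<k_2$ with $\Supp D_{k_1}=\Supp D_{k_2}$. Setting $s=k_2-k_1>0$, this reads $f^{-k_1}(f^{-s}(\Supp D))=f^{-k_1}(\Supp D)$. Since $f$ is a finite surjective endomorphism, the induced map $f_{*}$ on prime divisors is surjective (each prime has at least one prime preimage component), which makes $f^{-k_1}$ injective on subsets of prime divisors; stripping off $f^{-k_1}$ gives $f^{-s}(\Supp D)=\Supp D$, the desired $f^{-1}$-periodicity. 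I expect the main obstacle to be the rigidity step: passing from a class-level linear relation to a genuine equality of divisors is essential, and without it the inductive support-containment has no starting point.
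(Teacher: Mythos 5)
The paper quotes this lemma from \cite[Lemma 4.2]{MZ23b} without reproducing the proof, so there is no in-text argument to compare against; your proof is correct and is essentially the standard one. Every step checks out --- the rigidity of effective divisors with $\kappa=0$ (two distinct effective members of a linear system force $h^0\geq 2$, hence $\kappa\geq 1$), the sign-splitting of a linear relation among the classes $[D_i]$ in the finite-dimensional space $V_f(D)$, the support induction, and the injectivity of $f^{-1}$ on subsets (which follows from $f(f^{-1}(A))=A$ for surjective $f$) --- with the only point worth making explicit being that $\kappa_f(X,D)\geq 0$ since $D$ is effective, so ruling out the value $0$ really does give $\kappa_f(X,D)>0$.
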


\begin{theorem}[{\cite[Theorem 4.6]{MZ23b}}]\label{thm: fiitaka}
    Let $f$ be a surjective endomorphism of a normal projective variety $X$.
    Let $D$ be a $\Q$-Cartier divisor with $\kappa_f(X,D)\ge 0$.
    Then there is an $f$-equivariant dominant rational map $\phi_{f,D}:X\dashrightarrow Y$ to a normal projective variety $Y$
(with $f|_Y$ a surjective endomorphism too)
    satisfying the following conditions.
    \begin{enumerate}
        \item $\dim Y=\kappa_f(X,D)$.
        \item Let $\Gamma$ be the graph of $\phi_{f,D}$. Then the induced projection $\Gamma\to Y$ is equi-dimensional.
        \item $\phi_{f,D}$ is birational to the Iitaka fibration of any $D'\in V_f(D)$ with $\kappa(X,D')=\kappa_f(X,D)$. 
    \end{enumerate}
\end{theorem}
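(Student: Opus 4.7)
\medskip

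The plan is to reduce the statement to the classical Iitaka fibration theorem applied to a well-chosen effective divisor in $V_f(D)$, and then promote the construction to an $f$-equivariant one by exploiting the finite-dimensionality of $V_f(D)$.

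First I would handle the trivial case $\kappa_f(X,D)=0$ by taking $Y$ to be a point. Assume $\kappa_f(X,D)=k>0$. Choose effective divisors $D_0,\dots,D_n$ that span $V_f(D)$, which is possible because $V_f(D)$ is finite-dimensional and the $f^*$-orbit of $D$ (which one may replace by $mD$ for $m$ large enough to assume $D$ effective) generates it. Set $E=\sum_i D_i$; by the final sentence of Definition \ref{def: dynamical Iitaka dimension} we have $\kappa(X,E)=\kappa_f(X,D)=k$. Let $\phi_0\colon X\dashrightarrow Y_0$ be the classical Iitaka fibration of $E$, with $\dim Y_0=k$ and general fibre $F$ satisfying $\kappa(F,E|_F)=0$.

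Next I would show that $\phi_0$ is $f$-equivariant up to birational equivalence. Since $f^*E$ is again an effective element of $V_f(D)$, we have $\kappa(X,f^*E)\le k$; on the other hand $f$ is a finite surjective morphism, so $\kappa(X,f^*E)=\kappa(X,E)=k$. The claim I want is that the Iitaka fibration of $f^*E$ coincides birationally with that of $E$. Here is where I would use the finite-dimensionality of $V_f(D)$: because $f^*$ preserves $V_f(D)$, for every $N\ge 1$ the divisor $f^*E_N - E_N$ (where $E_N=\sum_{i=0}^{N-1}(f^*)^iD^*$ for a fixed effective generator $D^*\in V_f(D)$ with $\kappa(X,D^*)=k$) equals $(f^*)^N D^* - D^*$, and by the Cayley--Hamilton relation applied to $f^*|_{V_f(D)}$ the positive part of this expression is dominated, up to a fixed multiple, by $E_N$ itself. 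Combining this with $E_N\le E_{N+1}$ for all $N$ and choosing $N$ larger than $\dim V_f(D)$, one obtains effective relations of the form $af^*E\le bE + (\text{bounded})$ and symmetrically $aE\le bf^*E+(\text{bounded})$, which force the section rings $R(X,E)$ and $R(X,f^*E)$ to have the same function field after taking $\mathrm{Proj}$. This yields a surjective $g\colon Y_0\to Y_0$ with $g\circ\phi_0=\phi_0\circ f$.

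For part (2), I would resolve the indeterminacy of $\phi_0$ by blowing up equivariantly (as in the usual construction of the Iitaka fibration), and then apply the flattening theorem of Raynaud--Gruson combined with further $f$-equivariant blowups to obtain a representative $\varphi\colon X\dashrightarrow Y$ whose graph projects equi-dimensionally onto $Y$; the equivariance is preserved because each step can be chosen canonical (using the reduced structure of the flattening). For part (3), I would note that the same argument of Step 2 applies verbatim to any effective $D'\in V_f(D)$ with $\kappa(X,D')=k$, since the section rings of $D'$ and of $E$ have the same $\mathrm{Proj}$ up to birational equivalence by comparing both to the sum $D'+E$, which lies in $V_f(D)$ and has Iitaka dimension $k$.

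The main obstacle will be making the equivariance step rigorous: precisely, controlling the ``error term'' $(f^*)^N D^*-D^*$ well enough to conclude that $E$ and $f^*E$ define birationally equivalent Iitaka fibrations. Everything else is either classical Iitaka theory or standard equivariant resolution/flattening.
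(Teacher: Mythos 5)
The paper itself gives no proof of this statement: it is quoted verbatim from \cite[Theorem 4.6]{MZ23b}. Measured against the argument there (and its precursor \cite[Theorem 7.8]{MZ22}), your overall strategy --- take the classical Iitaka fibration of a maximal effective element of $V_f(D)$, prove equivariance by sandwiching $E$ and $f^*E$ inside a chain of effective divisors of the same Iitaka dimension, then adjust the model to get (1) and (2) --- is the right one, but two steps have genuine gaps.

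First, the set-up and the equivariance step. The hypothesis $\kappa_f(X,D)\ge 0$ does not make $D$, nor any spanning set of $V_f(D)$, effective; you must first pick $D^*\in V_f(D)$ with $\kappa(X,D^*)=\kappa_f(X,D)$ and replace it by an effective member of some $|mD^*|$. Relatedly, Cayley--Hamilton only yields a linear relation in $\Pic_{\Q}(X)$ with coefficients of both signs, so it does not bound the effective divisor $(f^*)^N D^*$ above by a multiple of $E_N$. Fortunately neither is needed: with $E_N=\sum_{i=0}^{N-1}(f^*)^iD^*$ one has the effective inequalities $E_N\le E_{N+1}$ and $f^*E_N=E_{N+1}-D^*\le E_{N+1}$, all three divisors lie in $V_f(D)$ and hence have Iitaka dimension exactly $\kappa_f(X,D)$ by maximality, and the standard lemma that $A$ and $A+B$ ($B\ge 0$, equal Iitaka dimension) have birationally equivalent Iitaka fibrations already identifies $\phi_{E_N}$ with $\phi_{f^*E_N}$. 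You should excise the Cayley--Hamilton detour and argue this way.

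Second, and more seriously, the passage to a \emph{surjective endomorphism} $f|_Y$ of a \emph{normal projective} $Y$ with equi-dimensional $\Gamma\to Y$ is where the real content lies, and your proposal for it does not work. After the sandwich argument, the induced self-map of $Y_0$ (coming from $f^*H^0(X,mE)\subseteq H^0(X,mf^*E)$ together with the birational identification above) is only a dominant, generically finite \emph{rational} self-map. Raynaud--Gruson flattening of the base cannot be made equivariant with respect to such a map: there is no canonical choice of blowup centers preserved by a self-map that is not yet a morphism, and ``further $f$-equivariant blowups'' of $Y_0$ are not defined at this stage. The construction in the source instead sends a general $x\in X$ to the cycle of the Iitaka fibre through $x$ and takes $Y$ to be the normalization of the closure of the image in $\Chow(X)$, on which $f$ acts by pushforward of cycles; this one construction simultaneously produces the normal projective model, the equi-dimensionality in (2), and the fact that $f|_Y$ is an honest surjective morphism. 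Without an argument of this type you obtain only a birationally equivariant dominant rational self-map, which is weaker than the statement.
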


\begin{definition}[{\cite[Definition 4.7]{MZ23b}}]\label{def-fiitaka}
    We call $\phi_{f,D}$ in Theorem \ref{thm: fiitaka} the 
    {\it dynamical $f$-Iitaka fibration} ($f$-Iitaka fibration or dynamical Iitaka fibration for short) of $D$. 
\end{definition}

A special case of the $f$-Iitaka fibration has been studied in \cite[Theorem 7.8]{MZ22}.
\begin{theorem}[{\cite[Theorem 4.8]{MZ23b}}]\label{thm: fiitaka polarized}
    Let $f:X\to X$ be a surjective endomorphism of a projective variety $X$.
    Let $D$ be a $\Q$-Cartier divisor such that $f^*D\equiv qD$ for some $q>1$ and $\kappa(X,D)>0$.
    Let $\phi_{f,D}:X\dashrightarrow Y$ be the $f$-Iitaka fibration of $D$.
    Then $f|_Y$ is $q$-polarized.
\end{theorem}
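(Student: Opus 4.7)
The plan is to promote the numerical eigenvector identity $f^*D\equiv qD$ to a $\Q$-linear equivalence $f^*\tilde D\sim_\Q q\tilde D$ for a suitable representative $\tilde D$ in (a rescaling of) the $f$-orbit of $D$, and then to read off the $q$-polarization of $f|_Y$ from the induced graded ring endomorphism on a section ring whose $\Proj$ realizes the Iitaka fibration base. This is the analogue for the Iitaka-fibration setting of the principle already visible in \cite[Theorem 6.4]{MMSZZ22}: once the ample eigenvector equation is sharp (not merely numerical), the associated projective construction comes equipped with an ample invertible sheaf that $f$ pulls back to its $q$-th tensor power on the nose.

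First, since $q>1$, I would apply \cite[Theorem 6.4(1)]{MMSZZ22} (exactly as in the proof of Lemma~\ref{lem: integral}) to the class $[D]$ to obtain a $\Q$-Cartier divisor $\tilde D$ with $\tilde D\equiv D$ and $f^*\tilde D\sim_\Q q\tilde D$. After replacing $f$ by an iterate to eliminate $\Pic^0(X)$-torsion obstructions, and replacing $\tilde D$ by a $\Q$-linearly equivalent effective divisor, one arranges $\tilde D$ effective with $\kappa(X,\tilde D)=\kappa(X,D)=\kappa_f(X,D)>0$. By construction $\tilde D$ lies in $V_f(D)$ (possibly after a rational rescaling), so Theorem~\ref{thm: fiitaka}(3) identifies $\phi_{f,D}$ with the Iitaka fibration of $\tilde D$ up to birational modification.

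Second, the identity $f^*\tilde D\sim_\Q q\tilde D$ upgrades the pullback of sections into a graded ring endomorphism
$$f^*\colon\ R(X,\tilde D)^{(\ell)}=\bigoplus_{m\geq 0}H^0\bigl(X,m\ell\tilde D\bigr)\ \longrightarrow\ R(X,\tilde D)^{(\ell)}$$
for $\ell$ sufficiently divisible, with $f^*$ multiplying the grading by $q$. Setting $Y':=\Proj R(X,\tilde D)^{(\ell)}$ with its tautological ample line bundle $\OO_{Y'}(1)$, the rational map $\varphi\colon X\dashrightarrow Y'$ is the Iitaka fibration of $\tilde D$, hence birational to $\phi_{f,D}\colon X\dashrightarrow Y$. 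The degree-by-$q$ scaling descends to an endomorphism $g'\colon Y'\to Y'$ with $(g')^*\OO_{Y'}(1)\cong\OO_{Y'}(q)$, which is $q$-polarized directly from Definition~\ref{def-endo}(1).

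Finally, the $f$-equivariance of both $\varphi$ and $\phi_{f,D}$, combined with their birational equivalence, identifies $g'$ with $f|_Y$ up to a generically finite modification between normal projective varieties; since $q$-polarization is a numerical condition on $f^*|_{\N^1}$ and pullback between N\'eron--Severi groups under such a modification is injective, it transfers from $g'$ to $f|_Y$. The principal technical difficulty is in the first step: producing an effective representative $\tilde D$ that simultaneously lies in the Iitaka class of $D$ and satisfies $f^*\tilde D\sim_\Q q\tilde D$. This is where one genuinely needs the combination $q>1$ (giving invertibility of $q\cdot\id-f^*$ on $\Pic^0(X)_\Q$ after an iterate) together with the effective positivity $\kappa(X,D)>0$ (ensuring that the $\Pic^0$-translate absorbed in passing from $D$ to $\tilde D$ does not destroy the section ring one needs to apply $\Proj$ to).
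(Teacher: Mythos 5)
First, a remark on context: the paper does not prove this statement at all --- it is imported verbatim as \cite[Theorem 4.8]{MZ23b} --- so your proposal can only be judged on its own merits. Your overall route (upgrade $f^*D\equiv qD$ to a genuine eigenvector $f^*\tilde D\sim_{\Q}q\tilde D$ via \cite[Theorem 6.4]{MMSZZ22}, then realize the base as $\Proj$ of the section ring with $f^*$ multiplying degrees by $q$) is indeed the expected strategy, in the spirit of \cite[Theorem 7.8]{MZ22}. But the step you yourself call ``the principal technical difficulty'' is asserted rather than proved, and it is the actual mathematical content. You claim $\kappa(X,\tilde D)=\kappa(X,D)=\kappa_f(X,D)$. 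Neither equality is automatic. Iitaka dimension is not a numerical invariant: for $X=C\times A$ with $g(C)\ge 2$ and $A$ abelian, $D=p_1^*K_C$ has $\kappa=1$ while $D+p_2^*L$ has $\kappa=-\infty$ for $L\in\Pic^0(A)$ non-torsion. So twisting $D$ by the $\Pic^0$-element produced by \cite{MMSZZ22} can a priori kill all sections, and ``replacing $\tilde D$ by a $\Q$-linearly equivalent effective divisor'' presupposes exactly what must be shown, namely $\kappa(X,\tilde D)\ge 0$. Separately, $\kappa_f(X,D)$ is by definition a maximum over all of $V_f(D)$ and could strictly exceed $\kappa(X,D)$; Theorem~\ref{thm: fiitaka}(3) only identifies $\phi_{f,D}$ with the Iitaka fibration of elements \emph{achieving that maximum}, so even granting $\kappa(X,\tilde D)=\kappa(X,D)>0$ you have not yet identified $\Proj R(X,\tilde D)$ with $Y$. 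A correct proof must show that the eigenvector $\tilde D$ computes $\kappa_f(X,D)$ (e.g.\ by comparing $\tilde D$ with sums $\sum_i q^{-i}(f^i)^*D$ of $\Q$-effective divisors inside the finite-dimensional, $f^*$-stable space $V_f(D)$); this is where $q>1$ and $\kappa(X,D)>0$ actually enter, and it is missing.

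Two further points would need repair even after that. (i) Your final transfer of $q$-polarization from $Y'=\Proj R(X,\tilde D)^{(\ell)}$ to the model $Y$ of Theorem~\ref{thm: fiitaka} ``up to generically finite modification'' does not follow from ``polarization is a numerical condition plus injectivity of pullback'': neither ampleness nor the full spectrum of $f^*|_{\N^1}$ is a birational invariant ($\N^1(Y)$ may be strictly larger than the image of $\N^1(Y')$, and the pullback of an ample eigenvector is only nef and big). One has to use the specific construction of $Y$ in \cite[Theorem 4.6]{MZ23b}, or invoke a ``big nef eigenvector with eigenvalue $>1$ implies polarized'' statement such as \cite[Proposition 1.1]{MZ18} on $Y$. (ii) Your graded-ring endomorphism multiplying degrees by $q$ implicitly requires $q\in\Z$ (as does the conclusion, by Definition~\ref{def-endo}(1)); the integrality of $q$ has to be extracted somewhere, e.g.\ as in Lemma~\ref{lem: integral}, rather than assumed.
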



\section{Decomposition of cones}\label{sec: decomposition}

In this section, we aim to prove decompositions of the nef cone and the pseudo-effective cone in Theorem \ref{thm: cone}.

\begin{lemma}\label{lem: cone}
    Let $\pi: X \to Y$ be a surjective morphism between normal projective varieties with $\rho(X)=\rho(Y)+1$.
    Suppose there exists a nef and $\pi$-ample divisor $D$ of $X$ with $\nu(X,D)=\dim X-\dim Y>0$. 
    Suppose further either one of the following two assumptions holds.
    \begin{enumerate}
        \item $\Nef(Y)=\PE(Y)$.
        \item $X$ and $Y$ are smooth and $\pi$ is equi-dimensional.
    \end{enumerate}  
    Then we have the decompositions: 
    $$
    \begin{aligned}
        \Nef(X) &= \pi^{\ast}\Nef(Y) \oplus \mathbb{R}_{\geq 0} D, \\ 
        \textup{PE}(X) &= \pi^{\ast}\textup{PE}(Y) \oplus \mathbb{R}_{\geq 0} D.
    \end{aligned}
    $$
\end{lemma}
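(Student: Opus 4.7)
Set $d=\dim X-\dim Y=\nu(X,D)$ and fix the vector-space decomposition $\N^1(X)=\pi^*\N^1(Y)\oplus\R D$: since $\rho(X)=\rho(Y)+1$, the class of $D$ either is or is not independent of $\pi^*\N^1(Y)$, and the latter option is ruled out because $D$ has positive degree on a curve contained in a fibre (as $D$ is $\pi$-ample) whereas every class in $\pi^*\N^1(Y)$ vanishes on such a curve. The inclusions $\pi^*\Nef(Y)+\R_{\geq 0}D\subseteq\Nef(X)$ and $\pi^*\PE(Y)+\R_{\geq 0}D\subseteq\PE(X)$ are immediate from functoriality of pullback and the nefness of $D$.

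\textbf{Pushforward identities.} The proof of the reverse inclusions rests on two numerical calculations on $Y$. First, $\pi_*(D^d)=c[Y]$ with $c=\deg(D|_F^{\,d})>0$ on a general fibre $F$, because $D|_F$ is ample on an $F$ of dimension $d$. Second, $\pi_*(D^{d+1})\equiv 0$ in $\N^1(Y)$: for any $\R$-Cartier divisors $A_1,\dots,A_{\dim Y-1}$ on $Y$, projection formula gives
\[
\pi_*(D^{d+1})\cdot A_1\cdots A_{\dim Y-1}=D^{d+1}\cdot \pi^*A_1\cdots\pi^*A_{\dim Y-1}=0,
\]
where the last equality uses $D^{d+1}\equiv_w 0$ (valid because $D$ is nef with $\nu(X,D)=d$). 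Combining the two, for any $\alpha=\pi^*\alpha_Y+rD\in\N^1(X)$,
\[
\pi_*(\alpha\cdot D^d)=\alpha_Y\cdot\pi_*(D^d)+r\pi_*(D^{d+1})=c\,\alpha_Y.
\]

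\textbf{Extracting the $Y$-component.} Write $N=\pi^*N_Y+tD\in\Nef(X)$. Restricting to a general fibre $F$, $N|_F=tD|_F$ is nef while $D|_F$ is ample, forcing $t\geq 0$. The identity above gives $\pi_*(N\cdot D^d)=cN_Y$. Using the $\epsilon$-perturbation $(N+\epsilon A)(D+\epsilon A)^d$ with $A$ ample, the class $N\cdot D^d$ is a limit of ample codimension-$(d+1)$ cycle classes, hence lies in the closure of the effective cone; proper pushforward preserves this, so $cN_Y\in\PE(Y)$ and thus $N_Y\in\PE(Y)$. Under hypothesis~(1) this equals $\Nef(Y)$ and we are done. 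Under hypothesis~(2), smoothness of $X,Y$ with equi-dimensionality makes $\pi$ flat by miracle flatness, and we upgrade $N_Y$ from pseudo-effective to nef as follows: for an arbitrary irreducible curve $C\subset Y$, the flat pullback $\pi^*[C]=[\pi^{-1}(C)]$ is an effective $(d+1)$-cycle, projection formula gives $c(N_Y\cdot C)=N\cdot D^d\cdot\pi^*C$, and on the $(d+1)$-dimensional $\pi^{-1}(C)$ the class $(D|_{\pi^{-1}(C)})^d$ is the codimension-$d$ (hence dimension-$1$) power of a nef divisor, which by the standard Bertini/ample-approximation argument is pseudo-effective as a curve class on $X$, so the nef class $N$ pairs nonnegatively with it. The decomposition of $\PE(X)$ is obtained by the same bookkeeping with $P=\pi^*P_Y+sD\in\PE(X)$ in place of $N$: the fibre restriction shows $s\geq 0$, and $\pi_*(P\cdot D^d)=cP_Y$ combined with pseudo-effectivity of $P\cdot D^d$ (approximating $P$ by effective divisors) yields $P_Y\in\PE(Y)$, which is the desired statement in both cases.

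\textbf{Main obstacle.} The essential subtlety is the pseudo-effectivity of the higher-codimension cycle class $N\cdot D^d$ (resp.\ $P\cdot D^d$) and its preservation under proper pushforward to the divisor level on $Y$. I would handle this by the ample-perturbation trick: $(N+\epsilon A)(D+\epsilon A)^d$ is represented by an effective $(\dim Y-1)$-cycle (iterated Bertini on very ample multiples), whose pushforward is effective on $Y$; letting $\epsilon\to 0$ and invoking closedness of $\PE(Y)$ places $cN_Y$ in $\PE(Y)$. In case~(2), the remaining technical point is the appeal to miracle flatness and the interpretation of $\pi^*C$ as an honest effective cycle class for an arbitrary curve $C\subset Y$; once this is in place, the inequality $N_Y\cdot C\geq 0$ follows directly from the nefness of $N$.
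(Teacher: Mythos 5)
Your treatment of case (2) is essentially the paper's own proof: miracle flatness, the flat pullback $\pi^*C$ of a curve (resp.\ of a nef $1$-cycle), and the projection-formula identity $\pi_*(N\cdot D^d\cdot \pi^*C)=N_Y\cdot C\cdot \pi_*(D^d)$; you additionally record why the coefficient of $D$ is nonnegative (restriction to a general fibre), a point the paper leaves implicit. For case (1), however, you take a genuinely different route. The paper observes that $\pi^*\Nef(Y)+\R_{\ge 0}D$ is a closed, full-dimensional subcone of $\Nef(X)\subseteq \PE(X)$, so it suffices to check that every class on its boundary, i.e.\ $\pi^*B+aD$ with $B$ nef and either $a=0$ or $B$ non-ample, is not big: all monomials $D^i\cdot(\pi^*B)^{\dim X-i}$ vanish, using $\nu(X,D)=\dim X-\dim Y$ for $i>\dim X-\dim Y$ and $B^{\dim Y}=0$ for the remaining term (under (1) a nef non-ample class cannot be big). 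Hence the boundary never meets the big cone and the subcone is everything. Your route instead pushes forward the pseudo-effective codimension-$(d+1)$ class $N\cdot D^d$ and reads off $c\,N_Y$.

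The one step of your case-(1) argument that does not go through at the stated level of generality is the inference ``$\pi_*(N\cdot D^d)=cN_Y$ is a limit of effective $(\dim Y-1)$-cycles, hence $N_Y\in\PE(Y)$.'' In case (1) the variety $Y$ is only assumed normal, so the effective cycles appearing in that limit are Weil divisors that need not be $\R$-Cartier; a priori you only learn that the image of $N_Y$ in $\N_{\dim Y-1}(Y)$ lies in $\overline{\mathrm{Eff}}_{\dim Y-1}(Y)$, and the inclusion $\overline{\mathrm{Eff}}_{\dim Y-1}(Y)\cap \N^1(Y)\subseteq \PE(Y)$ is not automatic without $\Q$-factoriality. (Nor can you instead test $N_Y$ against arbitrary curves of $Y$: without flatness the class $\pi^*C$ is unavailable, and pairing only against complete intersections of amples does not detect nefness or pseudo-effectivity.) This is harmless wherever the lemma is applied in the paper, since there $Y$ is $\Q$-factorial and $\N^1(Y)\cong \N_{\dim Y-1}(Y)$ identifies the two pseudo-effective cones, so your argument closes; but as a proof of the lemma as stated you should either add that hypothesis to your case (1) or replace this step by the paper's non-bigness computation on the boundary of the subcone.
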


\begin{proof}[Proof of Lemma \ref{lem: cone} under assumption (1)]
    Note that $\pi^*\Nef(Y)+\mathbb{R}_{\ge 0} D$ is a closed subcone of $\Nef(X)$ and it also spans $\N^1(X)$.
    Then it suffices to show $D+\pi^*B$ is not big for any nef and non-ample $\mathbb{R}$-Cartier divisor $B$ on $Y$.
    Note that $(D+\pi^*B)^{\dim X}=0$ because $D^i\equiv_w0$ for any $i>\dim X-\dim Y$ and $(\pi^*B)^j\equiv_w 0$ for any $j\ge \dim Y$ (cf.~Definition \ref{not: notation}).
\end{proof}

\begin{definition}
    Let $X$ be a smooth projective variety of dimension $n$.
    An $r$-cycle $Z$ on $X$ is said to be nef if 
    $Z\cdot E\ge 0$ for any effective $(n-r)-$cycle $E$.
    Note that an $\mathbb{R}$-Cartier divisor $D$ is pseudo-effective if and only if $D\cdot C\ge 0$ for any nef $1$-cycle $C$.
\end{definition}
\begin{proof}[Proof of Lemma \ref{lem: cone} under assumption (2)] 
    The proof is inspired by \cite{Jow25}. 
    Set $n\coloneqq\dim X, m\coloneqq\dim Y$ and $d\coloneqq n-m$.
    Note that $\pi$ is flat, by the assumption and ``miracle flatness'' (\cite[Theorem 23.1]{Mat86}).
    Since $\rho(X) = \rho(Y)+1$ and $D$ is $\pi$-ample, 
    we have $\N^1(X) = \pi^*\N^1(Y)\oplus \bbr D$.
    
    Let $B \in \Nef(X)$.
    Write $B = \pi^*B_Y + aD$.
    We just need to show that $B_Y \in \Nef(Y)$. 
    Since $\pi$ is flat, for any curve $C \subset Y$, the pullback $\pi^*C$ makes sense and is an effective $(d+1)$-cycle on $X$. 
    Note that $B\cdot D^d\cdot \pi^*C\ge 0$ because $B$ and $D$ are nef.
    Then we have 
    \begin{align*}
        \pi_*(B\cdot D^d\cdot \pi^*C ) &= \pi_*(\pi^*(B_Y\cdot C) \cdot D^d) + \pi_*(\pi^*C \cdot D^{d+1}) \\
        &= B_Y \cdot C \cdot \pi_*(D^d)\ge 0
    \end{align*} 
    by the projection formula.
    In particular, the divisor $B_Y$ is nef.

    Now we let $B \in \PE(X)$ and still write $B = \pi^*B_Y + aD$.
    Let $C$ be a nef $1$-cycle.
    Note that $\pi^*C$ is a nef $(d+1)$-cycle by the projection formula.
    Then $B\cdot D^d\cdot \pi^*C\ge 0$ because
    $B\cdot D^d$ can be realized as the limit of effective cycles $B_i\cdot H_i^d$ where $B_i$ is effective and $H_i$ is ample.  
    So we have 
    \begin{align*}
        \pi_*(B\cdot D^d\cdot \pi^*C ) &= \pi_*(\pi^*(B_Y\cdot C) \cdot D^d) + \pi_*(\pi^*C \cdot D^{d+1}) \\
        &= B_Y \cdot C \cdot \pi_*(D^d) \geq 0
    \end{align*} 
    and hence $B_Y$ is pseudo-effective.
\end{proof}

\begin{remark}
    Using the theory of numerical (co)cycles developed in \cite{Dan20}, 
    we can show that
    \[ \PE(X) = \pi^{\ast}\PE(Y) \oplus \mathbb{R}_{\geq 0} D \]
    without the assumption (2) in Lemma \ref{lem: cone}. Indeed, in the proof of this decomposition, we only have to pull back numerical $(m-1)$-cocycles, which does not require the smoothness of $X, Y$, or the flatness of $\pi$ (see \cite[Theorem 2.3.2]{Dan20}).
\end{remark}

We have the following generalized argument by the same proof of \cite[Proposition 8.4]{MZ22}.
We do not know whether the $\mathbb{Q}$-factorial assumption is necessary or not. However, it is important for the original proof concerning the pullback of weak numerical equivalent class; see also \cite[Lemma 8.3]{MZ22}. 

\begin{proposition}[{cf.~\cite[Proposition 8.4]{MZ22}}]\label{prop: numdim}
	Let $f$ be a surjective endomorphism of a normal projective variety $X$.
	Let $\pi:X\to Y$ be an $f$-equivariant surjective morphism to a $\Q$-factorial normal projective variety $Y$ with $\rho(X)=\rho(Y)+1$ and $\delta_f>\delta_{f|_Y}$.
	Suppose $f^*D\equiv \delta_f D$ for some $\pi$-ample divisor $D$ on $X$.
    Then $D$ is nef and the numerical dimension $\nu(X,D)=\dim X-\dim Y$.
\end{proposition}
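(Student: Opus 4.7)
The plan is to follow the strategy of \cite[Proposition 8.4]{MZ22}, handling nefness and numerical dimension in turn.

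For nefness, I would argue by Perron--Frobenius. Since $\rho(X)=\rho(Y)+1$ and $D$ is $\pi$-ample (so $D \notin \pi^{\ast}\N^1(Y)$), there is a direct sum decomposition $\N^1(X) = \pi^{\ast}\N^1(Y) \oplus \R D$, which is $f^{\ast}$-stable because $f^{\ast}\pi^{\ast}\N^1(Y) \subseteq \pi^{\ast}\N^1(Y)$ and $f^{\ast}D \equiv \delta_f D$. On the first summand the induced action has spectral radius $\delta_{f|_Y} < \delta_f$, so the $\delta_f$-eigenspace of $f^{\ast}|_{\N^1(X)}$ is precisely $\R D$. Applying Perron--Frobenius to $f^{\ast}$ on the closed cone $\Nef(X)$ produces a nonzero nef eigenvector with eigenvalue $\delta_f$, which must be a scalar multiple of $D$. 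Restricting to a general fibre $F$, on which $D|_F$ is ample and the nef eigenvector stays nef, forces this scalar to be positive, so $D$ itself is nef.

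Next, set $d:=\dim X-\dim Y$; note $d\geq 1$, for otherwise $\pi$ is generically finite and $\delta_f=\delta_{f|_Y}$. The lower bound $\nu(X,D)\geq d$ is immediate: $D|_F$ is ample on the general $d$-dimensional fibre $F$, so $D^d\cdot(\pi^{\ast}H_Y)^m>0$ for any ample $H_Y$ on $Y$, whence $D^d\not\equiv_w 0$. For the upper bound I want $D^{d+1}\equiv_w 0$. Given arbitrary Cartier divisors $H_1,\dots,H_{m-1}$ on $X$, the decomposition above allows me to write $H_i=\pi^{\ast}H_{i,Y}+a_iD$; expanding $D^{d+1}\cdot H_1\cdots H_{m-1}$ and applying the projection formula reduces the problem to showing $\pi_{\ast}D^l\equiv_w 0$ in $\N_{n-l}(Y)$ for every $l$ with $d<l\leq n$.

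This final reduction is the main obstacle. Applying $f_{\ast}$ to $f^{\ast}D^l\equiv_w \delta_f^l D^l$ and using $f_{\ast}f^{\ast}=\deg f\cdot\id$ gives $f_{\ast}D^l\equiv_w(\deg f/\delta_f^l)D^l$, and then the functoriality $\pi_{\ast}f_{\ast}=(f|_Y)_{\ast}\pi_{\ast}$ yields $(f|_Y)_{\ast}(\pi_{\ast}D^l)\equiv_w(\deg f/\delta_f^l)\,\pi_{\ast}D^l$. Converting this eigen-relation into the vanishing $\pi_{\ast}D^l\equiv_w 0$ is the technical heart of the proof: it should rest on a spectral bound for $(f|_Y)_{\ast}$ acting on $\N_{n-l}(Y)\otimes\R$, obtained via a Poincaré-type pairing with $(f|_Y)^{\ast}$ on complementary codimensions and hence controlled by $\delta_{f|_Y}$, combined with the strict gap $\delta_f>\delta_{f|_Y}$ to push the eigenvalue $\deg f/\delta_f^l$ outside the spectrum. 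The $\Q$-factoriality of $Y$ enters precisely here, exactly as in \cite[Lemma 8.3]{MZ22}, to guarantee that pullbacks of weak numerically equivalent classes on $Y$ are well-defined so that this spectral bookkeeping makes sense.
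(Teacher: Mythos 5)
Your proposal matches the paper's proof of this proposition: the paper likewise notes that the $\delta_f$-eigenspace of $f^*|_{\N^1(X)}$ is one-dimensional (hence spanned by $D$) and applies the Perron--Frobenius theorem to $f^*|_{\Nef(X)}$ to get nefness, while the computation $\nu(X,D)=\dim X-\dim Y$ is delegated to ``the same proof'' as \cite[Proposition 8.4]{MZ22}, which is exactly the argument you sketch (reduction to $\pi_*D^l\equiv_w 0$ via the eigen-relation for $(f|_Y)_*$, with $\Q$-factoriality entering through the pullback of weakly numerically equivalent classes as in \cite[Lemma 8.3]{MZ22}). Nothing in your outline conflicts with the paper's treatment.
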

\begin{proof}
    We only need to show that $D$ is nef.
    By the assumption, the eigenspace of $\delta_f$ is $1$-dimensional.
    We are done by applying the Perron--Forbenius Theorem (\cite{Bir67}) to $f^*|_{\Nef(X)}$.
\end{proof}

Both the assumptions of Theorems \ref{mainthm: abelian} and \ref{mainthm: rho1} fit into the following theorem.

\begin{theorem}[Cone Decomposition]\label{thm: cone}
    Let $f$ be a surjective endomorphism of a normal projective variety $X$.
	Let $\pi:X\to Y$ be an $f$-equivariant surjective morphism to a $\Q$-factorial normal projective variety $Y$ with $\rho(X)=\rho(Y)+1$ and $\delta_f>\delta_{f|_Y}$.
    Suppose further either one of the following two assumptions holds.
    \begin{enumerate}
        \item $\Nef(Y)=\textup{PE}(Y)$.
        \item $X$ and $Y$ are smooth and $\pi$ is equi-dimensional.
    \end{enumerate}  
    Then we have the decompositions: 
    $$
    \begin{aligned}
        \Nef(X) &= \pi^{\ast}\Nef(Y) \oplus \mathbb{R}_{\geq 0} D, \\ 
        \textup{PE}(X) &= \pi^{\ast}\textup{PE}(Y) \oplus \mathbb{R}_{\geq 0} D,
    \end{aligned}
    $$
    for some nef and $\pi$-ample Cartier divisor $D$ with $f^*D\equiv \delta_f D$.
\end{theorem}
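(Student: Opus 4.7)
The plan is to assemble the theorem from the three preceding pieces: Lemma \ref{lem: integral} (producing the eigendivisor), Proposition \ref{prop: numdim} (computing its numerical dimension), and Lemma \ref{lem: cone} (giving the cone decompositions under either of the two alternative hypotheses).

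First I would produce the divisor $D$. Since $\pi:X\to Y$ is $f$-equivariant with $\rho(X)=\rho(Y)+1$ and $\delta_f>\delta_{f|_Y}$, Lemma \ref{lem: integral} yields an integer $\delta_f$ together with a nef, $\pi$-ample Cartier divisor $D$ on $X$ satisfying $f^{\ast}D\sim_{\mathbb{Q}}\delta_f D$. In particular $f^{\ast}D\equiv \delta_f D$, which is the class identity we will need for the rest of the argument. Because $Y$ is $\mathbb{Q}$-factorial, the hypotheses of Proposition \ref{prop: numdim} are then satisfied, so $D$ is nef and
\[
\nu(X,D)=\dim X-\dim Y.
\]
This is exactly the numerical input required by Lemma \ref{lem: cone}.

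Next I would run Lemma \ref{lem: cone} with this $D$. Under assumption (1), $\Nef(Y)=\PE(Y)$ is passed directly to the lemma, and the two decompositions
\[
\Nef(X)=\pi^{\ast}\Nef(Y)\oplus\mathbb{R}_{\geq 0}D,\qquad \PE(X)=\pi^{\ast}\PE(Y)\oplus\mathbb{R}_{\geq 0}D
\]
follow. Under assumption (2), the smoothness of $X,Y$ together with equi-dimensionality of $\pi$ feeds the flat-pullback/projection-formula argument of Lemma \ref{lem: cone}, and the same conclusion holds. In either case, the fact that $D$ spans a complementary line to $\pi^{\ast}\N^{1}(Y)$ inside $\N^{1}(X)$ (guaranteed by $\pi$-ampleness and $\rho(X)=\rho(Y)+1$) ensures that the direct sum is indeed internal.

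The argument is essentially an assembly, so there is no substantial obstacle: the work has been done in Lemmas \ref{lem: integral}, \ref{lem: cone} and Proposition \ref{prop: numdim}. The only points that require attention are checking that the $\mathbb{Q}$-factoriality of $Y$ is used (invoked in Proposition \ref{prop: numdim} to control the pullback of weakly numerically trivial classes), and that the same divisor $D$ produced in the first step can be reused in all subsequent invocations. No additional ingredient beyond the Perron--Frobenius step inside Proposition \ref{prop: numdim} is needed to pin down the one-dimensional $\delta_f$-eigenspace, which in turn is what makes $D$ unique up to scale and hence makes the direct-sum decomposition unambiguous.
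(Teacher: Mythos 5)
Your proposal is correct and follows the paper's own proof essentially verbatim: the paper likewise obtains $D$ from Lemma \ref{lem: integral}, computes $\nu(X,D)=\dim X-\dim Y$ via Proposition \ref{prop: numdim}, and concludes with Lemma \ref{lem: cone}. The only implicit point worth noting (which neither you nor the paper spells out) is that $\dim X>\dim Y$, needed for the strict positivity in Lemma \ref{lem: cone}, follows automatically since $\delta_f>\delta_{f|_Y}$ rules out $\pi$ being generically finite.
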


\begin{proof}
    By Lemma \ref{lem: integral}, 
    there exists a nef and $\pi$-ample Cartier divisor $D$ such that $f^*D\equiv \delta_f D$.
    Then $\nu(X,D)=\dim X-\dim Y$ by Proposition \ref{prop: numdim}.
    The two decompositions follow from Lemma \ref{lem: cone}. 
\end{proof}


\section{Proof of Theorem \ref{mainthm: a character of toric fibraion over k}}\label{sec: character of toric fibration}

In this section, we focus on the proof of Theorem \ref{mainthm: a character of toric fibraion over k}.
The main tool is the sheaf of logarithmic $1$-forms; 
see Section \ref{subsec: forms}.
We also need the following useful result due to De-Qi Zhang.

\begin{proposition}[{\cite[Proposition 2.1]{Zha14}}]\label{prop: totally invariant divisor implies lc pair}
    Let \(X\) be a normal 
    variety, 
    \(f:X\to X\) a surjective endomorphism of \(\deg(f)>1\) and 
    $D$ a reduced divisor with \(f^{-1}(D)=D\). 
    Assume:
    \begin{enumerate}
        \item \(X\) is log canonical around \(D\);
        \item \(D\) is \(\mathbb{Q}\)-Cartier; and
        \item \(f\) is ramified around \(D\).
    \end{enumerate}
    Then the pair \((X,D)\) is log canonical around \(D\). 
    In particular, \(D\) is normal crossing outside the union of \(\operatorname{Sing}X\) and a codimension three subset of \(X\).
\end{proposition}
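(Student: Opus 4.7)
The plan is to adapt the absolute toric characterization of \cite[Theorem~1.2]{MZ19} to the relative setting of the Fano fibration $\pi$, via a relative semistability argument for the sheaf of reflexive logarithmic $1$-forms. First, the equation $f^{*}D=qD$ with $q>1$ forces the ramification index of $f$ along every component of $D$ to equal $q$, so the hypotheses of Proposition~\ref{prop: totally invariant divisor implies lc pair} are satisfied and $(X,D)$ is log canonical; since $X$ is smooth, $D$ is simple normal crossing outside a closed subset of codimension at least three. By a Bertini-type argument, for general $y\in Y$ the pair $(X_y,D_y)$ is log smooth where $D_y:=D|_{X_y}$, and adjunction together with the $\pi$-triviality of $K_X+D$ yields $K_{X_y}+D_y\equiv 0$, which strengthens to $K_{X_y}+D_y\sim_{\mathbb{Q}} 0$ since the Fano variety $X_y$ has $\Pic^{0}=0$.

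The goal is to show that $\Omega^{1}_{X_y}(\textup{log}\,D_y)\cong \OO_{X_y}^{\oplus \dim X_y}$; combined with log smoothness and $K_{X_y}+D_y\sim_{\mathbb{Q}} 0$, this characterizes $(X_y,D_y)$ as a toric pair by the remark following the definition of toric varieties in Section~\ref{subsec: forms}. Consider the relative logarithmic cotangent sequence
\[0\to \pi^{*}\Omega^{1}_Y\to \hat{\Omega}^{1}_X(\textup{log}\,D)\to \hat{\Omega}^{1}_{X/Y}(\textup{log}\,D)\to 0\]
on the log smooth locus; its rightmost term restricts on a general fibre to $\Omega^{1}_{X_y}(\textup{log}\,D_y)$, a rank $\dim X_y$ bundle with vanishing first Chern class. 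A direct calculation using $\pi$-triviality of $K_X+D$, $f$-equivariance, and $f^{*}D=qD$ shows that $R_f \equiv (q-1)D$ upon restriction to a general fibre, so the effective divisor $R_f-(q-1)D$ is entirely vertical; hence $f$ is étale outside $D$ on a $\pi$-saturated big open subset of $X$. Lemma~\ref{lem: pullback of differential} then provides an isomorphism $f^{*}\hat{\Omega}^{1}_X(\textup{log}\,D)\cong \hat{\Omega}^{1}_X(\textup{log}\,D)$, which, via $\pi$-equivariance and a diagram chase through the exact sequence above, descends to a pullback-compatibility identity for $\hat{\Omega}^{1}_{X/Y}(\textup{log}\,D)$.

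Combined with the vanishing of $c_{1}(\Omega^{1}_{X_y}(\textup{log}\,D_y))$, this identity forces $\Omega^{1}_{X_y}(\textup{log}\,D_y)$ to be relatively semistable with numerically trivial Chern classes on general fibres; iterating the pullback and invoking a Bogomolov-type argument on the Fano fibre $X_y$ then upgrades this to the desired fibrewise triviality $\Omega^{1}_{X_y}(\textup{log}\,D_y)\cong \OO_{X_y}^{\oplus \dim X_y}$. The main obstacle is precisely this relative semistability step: since $f$ does not in general restrict to an endomorphism of any fixed fibre, one cannot apply the absolute theorem \cite[Theorem~1.2]{MZ19} directly on $X_y$. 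Instead one must work globally with the reflexive logarithmic cotangent sheaf on $X$, carefully control the singular locus of $(X,D)$ and the vertical extra ramification of $f$ outside $D$, and then transfer the sheaf-theoretic conclusion through the relative exact sequence to the fibrewise sheaf $\Omega^{1}_{X_y}(\textup{log}\,D_y)$.
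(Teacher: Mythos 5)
Your proposal does not actually address the statement at hand. The statement is Proposition~\ref{prop: totally invariant divisor implies lc pair}, a result of De-Qi Zhang quoted from \cite{Zha14}: for a normal variety $X$, an endomorphism $f$ with $\deg f>1$, and a reduced divisor $D$ with $f^{-1}(D)=D$ (plus hypotheses (1)--(3)), the pair $(X,D)$ is log canonical around $D$, and hence $D$ is normal crossing outside $\operatorname{Sing}X$ and a codimension-three subset. What you have written is instead a sketch of the proof of Theorem~\ref{mainthm: a character of toric fibraion over k}: you assume an $f$-equivariant Fano fibration $\pi$, $\pi$-triviality of $K_X+D$, and $f^*D=qD$ with $q>1$ --- none of which occur in the proposition's hypotheses (the proposition does not even assume $X$ smooth or the existence of any fibration) --- and your target is the fibrewise triviality of $\hat{\Omega}^1_{X_y}(\log D_y)$ and the toric property of $(X_y,D_y)$, not log canonicity of $(X,D)$. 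Worse, your first step explicitly invokes Proposition~\ref{prop: totally invariant divisor implies lc pair} to conclude that $(X,D)$ is log canonical, so read as a proof of that proposition your argument is circular; read as a proof of Theorem~\ref{mainthm: a character of toric fibraion over k} it is answering a different question than the one posed.

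For the record, the proposition is used in the paper as a black box (it is a citation, with no proof given), and a genuine proof runs along entirely different lines from your semistability/Chern class machinery: since $f^{-1}(D)=D$, one has the logarithmic ramification formula $K_X+D=f^*(K_X+D)+\Delta$ with $\Delta\geq 0$, and one compares discrepancies (equivalently, log canonical thresholds) of $(X,D)$ near $D$ under pullback by the iterates $f^s$; because $\deg f>1$ and $f$ is ramified around $D$, any failure of log canonicity near $D$ would be strictly improved under pullback, which is impossible, so $(X,D)$ is lc around $D$. The final sentence then follows from the classification of log canonical pairs with reduced boundary in codimension two (on a smooth surface germ such a pair is at worst normal crossing). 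The relative semistability and $c_1$, $c_2$ arguments you outline belong to the proof of Theorem~\ref{mainthm: a character of toric fibraion over k} in Section~\ref{sec: character of toric fibration}, where the paper proceeds by restricting $\hat{\Omega}^1_X(\log D)$ to general fibres and applying \cite{GKP16} and \cite{BMSZ18}, rather than through the relative cotangent sequence you propose.
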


\begin{proof}[Proof of Theorem \ref{mainthm: a character of toric fibraion over k}]
    Denote by $g\coloneqq f|_Y$.
    Set $m\coloneqq \dim X, n\coloneqq \dim Y$ and $d\coloneqq m-n$.
    Note that $D_y\coloneqq D|_{X_y}=D\cap X_y\equiv -K_{X_y}$ is ample and $f^*X_y\equiv (\deg g) X_y$ for general fibre $X_y$ of $\pi$.
    If $d=1$, then $(X_y,D_y)\cong (\mathbb{P}^1, D_y)$ is a toric pair for general $y$.
    So we may assume $d\ge 2$.
    By the projection formula, we have
    $$(\deg f) D^d\cdot X_y=(f^*D)^d\cdot f^*X_y= (q^d\cdot \deg g )D^d\cdot X_y.$$
    So $\deg f=q^d\cdot \deg g$.
   
    By Proposition \ref{prop: totally invariant divisor implies lc pair}, 
    there is a smooth open subset $U\subseteq X$ such that $D \cap U$ is a normal crossing divisor and $\dim X \backslash U  \le \dim X-3$.
    Then $\hat{\Omega}_X^1(\log D)|_U$ is locally free.
    By the ramification divisor formula, 
    we have
    $$K_X+D=f^*(K_X+D)+\Delta_D$$
    where $\Delta_D=R_f-(q-1)D$ is an effective divisor.
    By the assumption $K_X+D$ being $\pi$-trivial, we see that $\Delta_D$ is also $\pi$-trivial.
    In particular, $\Delta_D$ does not dominate $Y$.
    Let $V\subseteq Y\backslash \pi(\Supp \Delta_D)$ such that $X_y$ is smooth, $\dim X_y\backslash U\le \dim X_y-3=\dim X-\dim Y-3$, and $D|_{X_y\cap U}$ is a normal crossing divisor for any $y\in V$.

    \begin{claim}\label{cla: pullback of differential}
        Let $y\in g^{-1}(V)$.
        Then
        $(f^*\hat{\Omega}_{X}^1(\log D))|_{X_y}\cong \hat{\Omega}_{X}^1(\log D)|_{X_y}$.
    \end{claim}
    \begin{proof}
        Note that $f|_{\pi^{-1}(g^{-1}(V))\backslash D}\colon \pi^{-1}(g^{-1}(V))\backslash D\to \pi^{-1}(V)\backslash D$ is \'etale by the purity of branch loci.
        By Lemma \ref{lem: pullback of differential}, we have
        $$(f^*\hat{\Omega}_{X}^1(\log D))|_{\pi^{-1}(g^{-1}(V))} \cong (\hat{\Omega}_{X}^1(\log D))|_{\pi^{-1}(g^{-1}(V))}$$
        and hence the claim follows.
    \end{proof}

    \begin{claim}\label{cla: restriction of ci}
        Let $i=1,2$ and $y\in V$.
        Then
        $$c_i(\hat{\Omega}_X^1(\log D))|_{X_y}=c_i(\hat{\Omega}_{X_y}^1(\log D_y))$$
        and
        for any subsheaf $\mathcal{F}\subseteq \hat{\Omega}_{X_y}^1(\log D_y)$, 
        there exists a subsheaf $\mathcal{G}\subseteq \hat{\Omega}_X^1(\log D)|_{X_y}$ such that $c_i(\mathcal{F})=c_i(\mathcal{G})$.
    \end{claim} 
    
    \begin{proof}
        By Proposition \ref{prop: the second exact sequence},
         we have an exact sequence of locally free sheaves
        $$0\to \mathcal{I}/\mathcal{I}^2|_{U\cap X_y}\xrightarrow{\sigma} \hat{\Omega}_X^1(\log D)|_{U\cap X_y}\xrightarrow{\theta} \hat{\Omega}_{X_y}^1(\log D_y)|_{U\cap X_y}\to 0$$
        where $\mathcal{I}$ is the ideal sheaf of $X_y$ and $\mathcal{I}/\mathcal{I}^2$ is free.
        Let $\mathcal{F}\subseteq \hat{\Omega}_{X_y}^1(\log D_y)$.
        Then there exists $\mathcal{G}\subseteq \hat{\Omega}_X^1(\log D)|_{X_y}$ such that $\Imm \sigma \subseteq \mathcal{G}|_{U\cap X_y}$ and $\theta(\mathcal{G}|_{U\cap X_y})=\mathcal{F}|_{U\cap X_y}$.
        Note that $\dim  X_y\backslash U \le \dim X_y-3$ and $c_i(\mathcal{I}/\mathcal{I}^2)=0$.
        Then we have the desired equalities of first and second Chern classes.
    \end{proof}

    \begin{claim}\label{cla: c1c2=0}
        Let $y\in V\cap g^{-1}(V)$.
        Then we have the Chern classes $c_1(\hat{\Omega}_{X_y}^1(\log D_y))\equiv 0$ and 
        $c_2(\hat{\Omega}_{X_y}^1(\log D_y))\cdot D_y^{d-2}=0$.
    \end{claim}

    \begin{proof}
        Note that $c_1(\hat{\Omega}_{X_y}^1(\log D_y))=K_{X_y}+D_y=(K_X+D)|_{X_y}\equiv 0$ by the assumption.
        We also have
        \begin{align*}
            c_2(\hat{\Omega}_{X_y}^1(\log D_y))\cdot D_y^{d-2}
            &=c_2(\hat{\Omega}_{X}^1(\log D))|_{X_y}\cdot D_y^{d-2}\\
            &=
            \frac{1}{\deg f}f^*c_2(\hat{\Omega}_X^1(\log D))\cdot f^*{X_y}\cdot (f^*D)^{d-2}\\
            &=\frac{1}{q^2}c_2((f^*\hat{\Omega}_X^1(\log D))|_{X_y})\cdot D_y^{d-2}\\
            &=\frac{1}{q^2}c_2(\hat{\Omega}_{X_y}^1(\log D_y))\cdot D_y^{d-2}
        \end{align*}
        where the first equality follows from Claim \ref{cla: restriction of ci}, 
        the second equality follows from the projection formula,  
        the third equality follows from the functoriality of $c_2$, and the last equality follows from Claims \ref{cla: pullback of differential} and \ref{cla: restriction of ci}.
        So the claim is proved.
    \end{proof}

    Given any non-zero torsion-free coherent sheaf $\mathcal{F}$ on $X_y$.
    Denote by 
    $$\mu_{D_y}(\mathcal{F})\coloneqq \frac{c_1(\mathcal{F})\cdot D_y^{d-1}}{\rank \mathcal{F}}.$$
    
    \begin{claim}\label{cla: restriction differential semistable}
        The restriction $\hat{\Omega}_{X}^1(\log D)|_{X_y}$ is $D_y$-slope semistable for general $y$.
    \end{claim}
    \begin{proof}
        By \cite[Theorem 2.3.2 and Remark 2.3.3]{HL10}, 
        after further shrinking $V$, 
        there exists a non-zero subsheaf $\mathcal{F} \subset \hat{\Omega}_X^1(\log D)$,
        such that 
        the restriction $\mathcal{F}|_{X_y}$ is the $D_y$-maximal destabilizing subsheaf of $\hat{\Omega}_X^1(\log D)|_{X_y}$ for $y\in V$.
        Let $y\in g^{-1}(V)\cap V$.
        Note that $f|_{X_y}:X_y\to X_{g(y)}$ is flat because $X_y$ and $X_{g(y)}$ are smooth and $f|_{X_y}$ is finite surjective.
        Then $(f^*\mathcal{F})|_{X_y}=(f|_{X_y})^*(\mathcal{F}|_{X_{g(y)}})$ is still a subsheaf of $$(f|_{X_y})^*(\hat{\Omega}_{X}^1(\log D)|_{X_{g(y)}})\cong (f^*\hat{\Omega}_{X}^1(\log D))|_{X_{y}}\cong \hat{\Omega}_{X}^1(\log D)|_{X_y}$$
        by Claim \ref{cla: pullback of differential}.
        Then 
        \begin{align*}
            \mu_{D_y} (\mathcal{F}|_{X_y})&\ge \mu_{D_y} ((f^*\mathcal{F})|_{X_y})\\
            &=\frac{c_1(f^*\mathcal{F})\cdot X_y\cdot D^{d-1}}{\rank \mathcal{F}}\\
            &=\frac{q\cdot c_1(\mathcal{F})\cdot X_y\cdot D^{d-1}}{\rank \mathcal{F}}\\
            &=q\cdot \mu_{D_y}(\mathcal{F}|_{X_y})
        \end{align*}
        and hence $\mu_{D_y} (\mathcal{F}|_{X_y})=\mu_{D_y}(\hat{\Omega}_{X}^1(\log D)|_{X_y})=0$.
        So the claim is proved.
    \end{proof}

    \begin{claim}\label{cla: fibre differential semistable}
        The sheaf $\hat{\Omega}_{X_y}^1(\log D_y)$ is $D_y$-slope semistable for general $y$.
    \end{claim}

    \begin{proof}
        Let $\mathcal{F}$ be the $D_y$-maximal destabilizing subsheaf of $\hat{\Omega}_{X_y}^1(\log D_y)$.
        By Claim \ref{cla: restriction of ci},
        there exists a subsheaf $\mathcal{G}$ of $\hat{\Omega}_{X}^1(\log D)|_{X_y}$ such that $c_1(\mathcal{F})=c_1(\mathcal{G})$.
        By Claims \ref{cla: c1c2=0} and \ref{cla: restriction differential semistable}, we have 
        $c_1(\mathcal{G})\cdot D_y^{d-1}\le 0$.
        So $\mu_{D_y}(\mathcal{F})\le 0$ and the claim is proved.
    \end{proof}

    \noindent
    {\bf End of Proof of Theorem \ref{mainthm: a character of toric fibraion over k}.}
    Let $y\in Y$ be general.
    Note that the smooth Fano $X_y$ is simply connected (cf. \cite[Corollary 4.18]{Deb01}).
    By Claims \ref{cla: c1c2=0}, \ref{cla: fibre differential semistable} and \cite[Theorem 1.20]{GKP16}, 
    the sheaf 
    $\hat{\Omega}_{X_y}^1(\log D\cap X_y)$ is free.
    In particular, we have the complexity 
    \[ c(X_{y},D\cap X_{y}) \leq \dim X_y + h^1(X_y,\mathcal{O}_{X_y}) - h^0(X_{y}, \hat{\Omega}_{X_y}^1(\log D_y)) = 0\]
    by \cite[Theorem 4.5]{MZ19}.
    Recall Proposition \ref{prop: totally invariant divisor implies lc pair} that $(X,D)$ is log canonical.
    Then $(X_{y},D_{y})$ is still log canonical by \cite[Theorem 5.50]{KM98}.
    It follows from \cite[Theorem 1.2]{BMSZ18} that $(X_{y},D_{y})$ is a toric pair.
\end{proof}


\section{Positivity of dynamical Iitaka dimension}\label{sec: positivity of ramification divisor}

In this section, we prove Theorem \ref{thm: pos} on the positivity of the dynamical Iitaka dimension of the ramification divisor.

\begin{lemma}\label{lem: wild morphism in codimension 1}
    Let $f$ be a surjective endomorphism of an abelian variety $A$.
    Suppose that $f$ has a Zariski dense orbit.
    Then there exists no $f^{-1}$-invariant prime divisor.
\end{lemma}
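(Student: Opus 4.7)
The plan is to argue by contradiction: assume a prime divisor $D \subset A$ satisfies $f^{-1}(D)=D$, reduce via descent to the case where $\Stab(D)$ is finite, then show $D$ must be ample, and finally deduce that $f$ has finite order, contradicting the existence of a Zariski dense orbit. Writing $f = \tau_c \circ \phi$ with $\phi$ a group endomorphism (an isogeny) and $\tau_c$ translation by $c$, the map $f$ is étale. Hence $f^{-1}(D)=D$ upgrades to $f^*D = D$ as Cartier divisors, and $f(D)=D$ gives $\phi(D) = D-c$.

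For the descent, set $K \coloneqq \Stab(D)^0$, an abelian subvariety of $A$. For $a \in K$, the identity $\phi(a+D) = \phi(a) + \phi(D)$ combined with $a+D=D$ yields $\phi(a) \in \Stab(\phi(D)) = \Stab(D)$; connectedness of $\phi(K)$ and equality of dimensions then force $\phi(K) = K$. Therefore $f$ descends along the quotient $p \colon A \to B \coloneqq A/K$ to a surjective endomorphism $\bar f$ of $B$, and $D = p^{-1}(\bar D)$ for a prime divisor $\bar D$ on $B$ with $\Stab_B(\bar D)$ finite. Moreover $\bar f^{-1}(\bar D) = \bar D$, and $\bar f$ inherits a Zariski dense orbit as the image of a dense orbit under the surjection $p$. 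Replacing $(A,f,D)$ by $(B,\bar f,\bar D)$, I may assume $\Stab(D)$ is finite.

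Under this assumption, Ueno's formula for the Iitaka dimension of an effective divisor on an abelian variety gives $\kappa(A,D) = \dim A - \dim \Stab(D)^0 = \dim A$, so $D$ is big. Since every effective divisor on an abelian variety is nef, and a nef big line bundle on an abelian variety is ample (Mumford's criterion: $K(L)$ finite is forced because the restriction of $L$ to $K(L)^0$ lies in $\Pic^0$, while $D$ having finite stabilizer forces $K(L)^0$ to meet $D$ in an effective divisor that is numerically trivial, hence zero), $D$ is ample. Applying the projection formula to $f^*D = D$ yields $(\deg f)\cdot D^n = D^n > 0$, so $\deg f = 1$ and $f$ is an automorphism. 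Since $\phi^*[D]=[D]$ in $\NS(A)$ and $[D]$ is a polarization, $\phi$ lies in the finite group $\Aut(A,[D])$ of polarized automorphisms, so $\phi^n = \id_A$ for some $n \ge 1$.

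Then $f^n = \tau_{c'}$ is a translation, and the relation $f^{-n}(D)=D$ forces $c' \in \Stab(D)$, which is finite; thus $c'$ is torsion and some further iterate $f^{nm} = \id_A$. A finite-order endomorphism admits no Zariski dense orbit, giving the desired contradiction. The main point I expect to be delicate is the descent step, in particular checking that $\phi(K)=K$ and that the hypothesis of Zariski density transports cleanly down to $(B,\bar f,\bar D)$; the remainder is essentially a chain of classical facts on abelian varieties (Ueno's formula, Mumford's ampleness criterion, and finiteness of automorphism groups of polarized abelian varieties).
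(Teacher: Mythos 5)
Your proof is correct, but it takes a genuinely different route from the paper's. The paper quotes a structural result (\cite[Lemma 4.6]{Xie25}: an $f^{-1}$-invariant prime divisor on an abelian variety is a translate of an abelian subvariety $P$), passes to the one-dimensional quotient $A/P$, and observes that the induced map is an automorphism of order at most $6$, contradicting density of the orbit. You instead avoid that input entirely: you quotient by $K=\Stab(D)^0$ to reduce to the case of finite stabilizer, invoke Mumford's criterion (an effective divisor on an abelian variety with finite stabilizer is ample), use the projection formula on $f^*D=D$ to force $\deg f=1$, and then use finiteness of the polarized automorphism group plus torsion of the translation part to conclude $f$ has finite order on the quotient. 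Your endgame (a finite-order descended map kills density) matches the paper's, but your key input is classical abelian-variety theory rather than the classification of totally invariant divisors; this makes your argument more self-contained, at the cost of being longer. All the delicate steps check out: $f$ is \'etale so $f^{-1}(D)=D$ does give $f^*D=D$ as divisors; $\phi(K)=K$ follows from $\Stab(\phi(D))=\Stab(D-c)=\Stab(D)$ together with connectedness and dimension count; and density does transport down the surjection $p$. The only blemish is your parenthetical ``justification'' of the ampleness step, which as written does not parse ($K(L)^0$ meeting $D$ in a numerically trivial effective divisor is not the right mechanism); you should simply cite Mumford's Application 1 (\emph{Abelian Varieties}, \S II.6): for $D$ effective, $\mathcal{O}_A(D)$ is ample if and only if $\Stab(D)$ is finite. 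With that citation in place of the sketch, the proof is complete.
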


\begin{proof}
    Suppose the contrary that $f^{-1}(P)=P$ for some prime divisor $P$.
    By \cite[Lemma 4.6]{Xie25}, $P$ is a translation of an abelian subvariety of $X$.
    We may assume the origin of $A$ is contained in $P$.
    Consider the quotient map
    $$f\acts A\xrightarrow{\pi} A/P\racts g$$
    where $g\coloneqq f|_{A/P}$ is well-defined.
    Note that $\dim A/P=1$ and $g^{-1}(P/P)=P/P$.
    So $g$ is an automorphism of order at most $6$ and hence $f$ has no Zariski dense orbit, a contradiction.
\end{proof}

\begin{lemma}\label{lem: relative picard}
    Let $\pi:X \to Y$ be a surjective projective morphism between $\mathbb{Q}$-factorial normal varieties.
    Suppose that general fibre of $\pi$ is a smooth Fano variety with Picard number $r$.
    Suppose further that $\pi^{-1}(Q)$ is a prime divisor for any prime divisor $Q$ on $Y$.
    Then $\rank \Pic(X)/\pi^*\Pic(Y)\le r$.
\end{lemma}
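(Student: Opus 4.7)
The plan is to compare $\Pic(X)/\pi^*\Pic(Y)$ with $\Pic(X_\eta)$, where $X_\eta$ is the generic fibre, via the restriction map, and to use the hypothesis on $\pi^{-1}(Q)$ to control the kernel so as to bound the rank by $r=\rho(X_\eta)$.

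The first step is to classify the vertical prime divisors on $X$, that is, prime divisors $P\subset X$ with $\pi(P)\subsetneq Y$. Given such a $P$, I would pick any prime divisor $Q\subseteq Y$ containing $\pi(P)$; then $P\subseteq \pi^{-1}(Q)$, and the hypothesis that $\pi^{-1}(Q)$ is a prime divisor forces $P=\pi^{-1}(Q)$ by comparison of dimensions. Moreover, since $Y$ is $\bbq$-factorial, replacing $Q$ by a suitable multiple $NQ$ makes it Cartier; then $\pi^*(NQ)$ is Cartier and is set-theoretically supported on the prime divisor $\pi^{-1}(Q)$, so $\pi^*(NQ)=m\,\pi^{-1}(Q)$ for some positive integer $m$. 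This shows that every vertical prime divisor on $X$ lies in $\pi^*\Pic(Y)\otimes\bbq$.

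Next, I would consider the restriction map $\mathrm{res}\colon\Pic(X)\to\Pic(X_\eta)$. Since $\pi^*L|_{X_\eta}$ is trivial for every $L\in\Pic(Y)$, this factors through a map
\[
\overline{\mathrm{res}}\colon\Pic(X)/\pi^*\Pic(Y)\longrightarrow\Pic(X_\eta).
\]
If $D\in\Pic(X)$ lies in $\ker(\mathrm{res})$, then $D|_{X_\eta}$ is linearly trivial, so there exists a rational function $\varphi$ on $X$ with $D-\divv(\varphi)$ supported on vertical prime divisors. By the first step, this vertical divisor is a $\bbq$-linear combination of pullbacks from $Y$, hence $\overline{\mathrm{res}}$ becomes injective after tensoring with $\bbq$.

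Finally, I would bound $\rank\Pic(X_\eta)\le r$. The generic fibre $X_\eta$ is a smooth Fano variety over $k(Y)$, so $H^1(\OO)=0$ on the geometric generic fibre gives $\Pic(X_{\bar\eta})\simeq\N^1(X_{\bar\eta})$, a free abelian group whose rank coincides with the Picard number of a very general smooth Fano fibre (constancy of Picard number in smooth Fano families), which is $r$ by hypothesis; then $\rank\Pic(X_\eta)\le\rank\Pic(X_{\bar\eta})=r$. Combining these steps yields
\[
\rank\bigl(\Pic(X)/\pi^*\Pic(Y)\bigr)\le\rank\Pic(X_\eta)\le r.
\]
The main technical point is the first step: the hypothesis that $\pi^{-1}(Q)$ is prime for every prime divisor $Q\subset Y$ is precisely what rules out vertical prime divisors mapping onto subvarieties of codimension $\ge 2$ in $Y$, which would otherwise contribute extra classes to $\Pic(X)/\pi^*\Pic(Y)$ beyond those coming from the fibre. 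Care is also needed in passing from the generic fibre to the geometric generic fibre in the final rank bound, but this is standard for smooth Fano fibrations.
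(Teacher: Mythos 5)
Your proof is correct, and its underlying idea coincides with the paper's: bound the horizontal contribution by the Picard rank of a fibre, and use the hypothesis on $\pi^{-1}(Q)$ to show that vertical divisors lie in $\pi^*\Pic_{\Q}(Y)$ (your first step is exactly the implicit content of the paper's final line). Where you differ is in the technical passage from ``trivial on a fibre'' to ``vertical'': the paper takes $r+1$ line bundles, finds an integral combination $D$ that is numerically --- hence, by simple connectedness and $\Pic^0=0$ of the Fano fibres, linearly --- trivial on every closed fibre over the smooth locus $V$, and then invokes Grauert's theorem to conclude $D|_{\pi^{-1}(V)}\sim 0$; you instead restrict to the generic fibre $X_\eta$, where linear triviality immediately yields a vertical divisor, at the cost of needing the comparison $\Pic(X_\eta)\hookrightarrow\Pic(X_{\bar\eta})$ (Hilbert 90) and the specialization bound $\rank\Pic(X_{\bar\eta})\le\rho(X_y)=r$. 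Your route avoids Grauert and the cohomology-and-base-change bookkeeping, while the paper's avoids any discussion of the geometric generic fibre; both are complete, and your step 3, though stated a little loosely (``constancy of Picard number''), only needs the injectivity of specialization, which holds for any closed point of the smooth locus and so causes no trouble over a countable base field.
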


\begin{proof}
    Let $V$ be a non-empty open subset of $Y$ such that $\pi$ is smooth over $V$.
    Let $D_1, \cdots, D_{r+1}\in \Pic(X)$.
    Note that $X_y\equiv X_{y'}$ for any $y,y'\in V$.
    Then there exist integers $a_i$, not all of which are zero, such that 
    $\sum\limits_{i=1}^{r+1} a_i D_i|_{X_y}\equiv 0$
    for every $y\in V$.
    Denote by $D\coloneqq \sum\limits_{i=1}^{r+1} a_i D_i$ and $\mathcal{L}\coloneqq\pi_*\mathcal{O}_X(D)$.
    For $y\in V$, we have $\Pic^0(X_y)=0$ and 
    $\Pic(X_y)/\Pic^0(X_y)$ is torsion-free because $X_y$ is simply connected (cf.~\cite[Corollary 4.18]{Deb01}).
    Then we have $D|_{X_y}\sim 0$
    for every $y\in V$. 
    By Grauert's Theorem (cf.~\cite[Chapter \uppercase\expandafter{\romannumeral3}, Corollary 12.9 and Exercise 12.4]{Har77}),
    we have $\mathcal{L}|_V\cong \mathcal{O}_V$ and $D|_{\pi^{-1}(V)}\sim 0$.
    So we may assume $\Supp D\subseteq \pi^{-1}(Y\backslash V)$.
    Let $Q_1,\cdots, Q_n$ be prime divisors contained in $Y\backslash V$.
    By the assumption, we have $D\sim \sum\limits_{i=1}^n b_i \pi^{-1}(Q_i)\in \pi^*\Pic_{\Q}(Y)$.
\end{proof}

\begin{theorem}\label{thm: pos}
    Under the assumption of Theorem \ref{mainthm: abelian} or \ref{mainthm: rho1}, we have the dynamical Iitaka dimension $\kappa_f(X,R_f)>0$.
\end{theorem}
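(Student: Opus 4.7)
The plan is to apply the criterion of Lemma \ref{lem-fkappa>0}. If $\Supp R_f$ is not $f^{-1}$-periodic, then $\kappa_f(X,R_f)>0$ at once. Otherwise, after replacing $f$ by a sufficiently divisible iterate (which preserves $\kappa_f(X,R_f)$ by Lemma \ref{lem: composition formula}), we may assume that each prime component $P_i$ of $R_f$ satisfies $f^{-1}(P_i)=P_i$, so $f^*P_i=e_i P_i$ as divisors for some integers $e_i\geq 2$. Set $D\coloneqq\Supp R_f=\sum_i P_i$; the ramification formula $K_X=f^*K_X+\sum_i(e_i-1)P_i$ then yields the identity $K_X+D=f^*(K_X+D)$ in $\Pic(X)$.

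Next, I would exploit the cone decomposition of Theorem \ref{thm: cone}, which is available in both settings (via $\Nef(Y)=\PE(Y)$ when $Y$ is abelian, or via smooth equidimensional $\pi$ when $\rho(Y)=1$). Writing $\N^1(X)=\pi^*\N^1(Y)\oplus\mathbb{R}[D_0]$ where $D_0$ is a $\delta_f$-eigendivisor of $f^*$, the operator $f^*$ acts as $(f|_Y)^*$ on the first summand and as multiplication by $\delta_f>1$ on the second. Consequently, any $f^*$-fixed class lies in $\pi^*\N^1(Y)$; hence $K_X+D$ is $\pi$-numerically trivial. Performing the same eigenvalue bookkeeping on $f^*[P_i]=e_i[P_i]$ shows that for each $i$, either $P_i$ is $\pi$-horizontal with $e_i=\delta_f$, or $[P_i]\in\pi^*\N^1(Y)$ with $e_i\leq\delta_{f|_Y}$.

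The central step is to produce a non-zero horizontal part $D_h\coloneqq\sum_{P_i\ \textup{horizontal}}P_i$ and then invoke Theorem \ref{mainthm: a character of toric fibraion over k}. If $D_h=0$, then $R_f|_{X_y}=0$ for a general fibre, so $f|_{X_y}\colon X_y\to X_{g(y)}$ is \'etale between smooth Fano varieties; since smooth Fano varieties are simply connected, $f|_{X_y}$ must be an isomorphism. Restricting $f^*D_0\equiv \delta_f D_0\pmod{\pi^*\N^1(Y)}$ to $X_y$ and taking top self-intersection then forces $\deg(f|_{X_y})=\delta_f^{\dim X-\dim Y}$, so $\delta_f=1$, contradicting $\delta_f>\delta_{f|_Y}\geq 1$. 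With $D_h\neq 0$, the divisor $D_h$ is reduced, satisfies $f^*D_h=\delta_f D_h$, and $(K_X+D_h)|_{X_y}=(K_X+D)|_{X_y}\equiv 0$ since vertical components do not meet a general fibre; Theorem \ref{mainthm: a character of toric fibraion over k} then gives that $(X_y,D_h|_{X_y})$ is a toric pair for general $y\in Y$.

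Finally, in the toric pair $(X_y,D_h|_{X_y})$ with $X_y$ Fano, the boundary $D_h|_{X_y}\sim -K_{X_y}$ is ample. Hence $D_h$ is $\pi$-big and $\kappa(X,D_h)\geq\dim X_y>0$. As $R_f\geq(\delta_f-1)D_h$ as effective divisors, we conclude $\kappa_f(X,R_f)\geq\kappa(X,R_f)\geq\kappa(X,D_h)>0$. The principal obstacle is verifying $D_h\neq 0$ via the simple-connectedness argument on a general fibre, together with checking that all hypotheses of Theorem \ref{mainthm: a character of toric fibraion over k} are satisfied; the cone decomposition from Theorem \ref{thm: cone} handles the remaining formal pieces essentially automatically.
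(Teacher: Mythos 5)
Your reduction to the horizontal part $R_f^h$ and the appeal to Theorem \ref{mainthm: a character of toric fibraion over k} to make $(X_y,D_h|_{X_y})$ a toric pair both match the paper's strategy, and your argument that $D_h\neq 0$ (simple connectedness of the smooth Fano fibre plus $\deg(f|_{X_y})=\delta_f^{\dim X-\dim Y}>1$) is essentially the one the paper uses. The proof breaks down at the very last step, however: from ``$D_h|_{X_y}\sim -K_{X_y}$ is ample'' you conclude ``$D_h$ is $\pi$-big, hence $\kappa(X,D_h)\geq\dim X_y>0$.'' Relative bigness does not bound the global Iitaka dimension from below. Easy additivity gives only the upper bound $\kappa(X,D)\leq\kappa(X_y,D|_{X_y})+\dim Y$; there is no lower bound of the kind you need. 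A concrete counterexample in exactly this geometry: for $X=\mathbb{P}(E)\to Y$ with $E$ the nonsplit extension of $\OO_Y$ by $\OO_Y$ on an elliptic curve $Y$, the tautological divisor is $\pi$-ample (ample on every fibre) yet has Iitaka dimension $0$ on $X$. So your final inequality is a non sequitur, and it is precisely the crux of the theorem: the danger is that each horizontal component $D_i$ could a priori satisfy $\kappa(X,D_i)=0$ even though $D_i|_{X_y}$ is a nonzero effective divisor.

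The paper closes this gap by a completely different mechanism. Assuming $\kappa_f(X,R_f^h)=0$, each component satisfies $f^*D_i=\delta_f D_i$ after iteration, and since $\rho(X)=\rho(Y)+1$ one first shows $D=\Supp R_f^h$ must be irreducible (otherwise $D_1\equiv tD_2$ and the eigenvector rigidity of \cite[Proposition~6.3]{MMSZZ22} upgrades this to $D_1\sim_{\Q}tD_2$, giving $\kappa(X,D_1)>0$). The toric structure of $(X_y,D|_{X_y})$ is then used not for positivity of $D|_{X_y}$ but to \emph{count components}: the fibrewise boundary has $s=\dim X_y+\rho(X_y)>\rho(X_y)$ components, whereas after an equivariant base change (Claim \ref{claim: base change}) making all $s$ components globally visible, Lemma \ref{lem: relative picard} bounds $\operatorname{rank}\Pic(\widehat X)/\widehat\pi^*\Pic(\widehat Y)$ by $\rho(X_y)<s$. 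This forces a nontrivial relation $\sum a_i\widehat D_i\in\widehat\pi^*\Pic(\widehat Y)$, which combined again with \cite[Proposition~6.3]{MMSZZ22} yields $\sum a_i\widehat D_i\sim_{\Q}0$ with the $a_i$ of mixed signs, hence two distinct effective members of a linear system and $\kappa>0$, a contradiction. You would need to supply an argument of this kind (or some other genuine lower bound on $\kappa(X,D_h)$) to complete your proof; as written, everything after ``Hence $D_h$ is $\pi$-big'' is unjustified.
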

\begin{proof}
    Denote by $R_f^h$ the $\pi$-horizontal part of $R_f$.
    It suffices to show $\kappa_f(X,R_f^h)>0$.
    Note that the general fiber of $\pi$ is a smooth Fano variety and the restriction of $f$ on the fibre of $\pi$ has degree greater than $1$.
    Then $R_f^h \neq 0$ by the purity of branch loci and simply connectedness of smooth Fano varieties.
    Suppose the contrary that $\kappa_f(X,R_f^h)=0$.
    Let $D_1,\cdots, D_m$ be irreducible components of $D\coloneqq\Supp R_f^h$.
    Note that $\kappa(X,D_i)=0$ for each $i$.
    By \cite[Lemma 4.2]{MZ23b}, after iteration, 
    we may assume that $f^*D_i= \delta_f D_i$ for each $i$.
    
    We claim that $D$ is a prime divisor.
    Suppose the contrary that $m\ge 2$.
    Then we have $D_1\equiv tD_2$ for some rational number $t>0$ by noting that $\rho(X)=\rho(Y)+1$.
    So 
    $$f^*(D_1-tD_2)=\delta_f(D_1-tD_2).$$
    By \cite[Proposition 6.3]{MMSZZ22}, we have $D_1-tD_2\sim_{\mathbb{Q}} 0$ and hence $\kappa(X,D_1)>0$, a contradiction.

    By Theorem \ref{mainthm: a character of toric fibraion over k}, 
    for general $y \in Y$, $(X_y,D|_{X_y})$ is a toric pair.
    In particular, the number of irreducible components of $D \cap X_y$ is equal to $s\coloneqq \dim X_y + \rho(X_y)>\rho(X_y)$.

    \begin{claim}\label{claim: base change}
        There exist equivariant dynamical systems:  
        $$\xymatrix{
            \save[]+<-1.4pc,0.15pc>*{\widehat{f} \acts} \restore \widehat{X} \ar[r]^{\widehat{\pi}}\ar[d]_{p_X} &\widehat{Y}\ar[d]^{p_Y}\save[]+<1.4pc,0.15pc>*{\racts\widehat{g}} \restore \\
            \save[]+<-1.4pc,0.15pc>*{f \acts} \restore  X \ar[r]^{\pi} &Y\save[]+<1.4pc,0.15pc>*{\racts g} \restore 
        }$$
        such that 
        \begin{enumerate}
            \item $p_Y$ is generically finite,
            \item $\widehat{X}\cong X\times_Y \widehat{Y}$ with $\widehat{\pi}$ and $p_X$ the two natural projections.
            \item $p_X^{-1}(D)$ has exactly $s$ irreducible components $\widehat{D}_1,\cdots,\widehat{D}_s$, and
            \item $\widehat{\pi}$ satisfies the assumption of Lemma \ref{lem: relative picard}.
        \end{enumerate}
    \end{claim}

    Note that base change does not change fibres.
    By Lemma \ref{lem: relative picard}, 
    we have 
    $$\rank \Pic(\widehat{X})/\Pic(\widehat{Y})\le \rho(X_y)<s$$ 
    where $y$ is general.
    Then there exist integers $a_i$, not all of which are zero, such that 
    $$\widehat{D}\coloneqq\sum_{i=1}^s a_i \widehat{D}_i \in \widehat{\pi}^*\Pic(\widehat{Y}).$$
    After iteration, we may assume that $\widehat{f}^*\widehat{D}=q\widehat{D}$.
    By \cite[Proposition 6.3]{MMSZZ22} again, 
    we have $\widehat{D}\sim_{\Q} 0$.
    Therefore, $\kappa(\widehat{X}, \sum\limits_{i=1}^s \widehat{D}_i)>0$.
    By \cite[Theorem 5.13]{Uen75},
    we have
    $\kappa(X,D)=\kappa(\widehat{X}, p_X^*D)>0$,
    a contradiction.
\end{proof}

\begin{proof}[Proof of Claim \ref{claim: base change}]
    By the same strategy as in the proof of \cite[Lemma 5.1]{MZg24},
    there exists a finite surjective morphism $p_Y:\widehat{Y}\to Y$ from a normal projective variety $\widehat{Y}$,
    such that 
    the induced equivariant dynamical systems satisfy (1), (2) and (3).
    
    \smallskip
 
    {\bf Under the assumption of Theorem \ref{mainthm: abelian}}.
    Since $Y$ is an abelian variety, 
    our $g$ is \'etale and hence 
    the branch divisor $B_{p_Y}$ is $g^{-1}$-invariant.
    By Proposition \ref{lem: wild morphism in codimension 1}, 
    we have $B_{p_Y} = 0$ and hence $p_Y$ is \'etale by the purity of the branch loci.
    It follows that $\widehat{X}$ is smooth and $\widehat{Y}$ is an abelian variety.

    Let $\Sigma$ be the subset of $y\in Y$ such that $\pi^{-1}(y)$ is reducible.
    Denote by $\overline{\Sigma}$ the Zariski closure of $\Sigma$.
    Then $g^{-1}(\overline{\Sigma})\subseteq \overline{\Sigma}$
    by \cite[Lemmas 7.2 and 7.4]{CMZ20}.
    So ${g}^{-1}(\overline{\Sigma})= \overline{\Sigma}$.
    Note that $g$ admits a Zariski dense orbit, by Lemma \ref{lem: wild morphism in codimension 1}, the codimension of $\overline{\Sigma}$ in $Y$ is at least $2$.
    Since $p_Y$ is finite, the same statement holds for $\widehat{\pi}:\widehat{X} \to \widehat{Y}$.
    Since $X$ is smooth, the cone theorem \cite[Theorem 3.7]{KM98} implies that $\pi(P)$ is either $Y$ or a prime divisor on $Y$.
    Let $\widehat{Q}$ be a prime divisor on $\widehat{Y}$.
    Then the general fiber of $\widehat{\pi}^{-1}(\widehat{Q}) \to \widehat{Q}$ is irreducible.
    Hence if $\widehat{\pi}^{-1}(\widehat{Q})$ is reducible, there is a component $\widehat{P}$ of $\widehat{\pi}^{-1}(\widehat{Q})$ such that $\widehat{\pi}(\widehat{P})$ is of codimension at least $2$ in $\widehat{Y}$, a contradiction.
    So (4) is satisfied.
    
    \smallskip
    \textbf{Under the assumption of Theorem \ref{mainthm: rho1}}.
    Since $\delta_g=1$, we see that $g$ and hence $\widehat{g}$ are automorphisms.
    Then we can replace $\widehat{Y}$ by its $\widehat{g}$-equivariant resolution of singularities
    by \cite[Theorem 1.1]{BM08}.
    The cost is that $p_Y$ and $p_X$ are generically finite.
    However, the morphism $\widehat{\pi}$ remains smooth.
    In particular, 
    we have that $\widehat{X}$ and $\widehat{Y}$ are now smooth and that every fibre of $\widehat{\pi}$ is irreducible since $\pi$ has connected fibres.
    So (4) is satisfied.
\end{proof}


\section{Proof of Theorems \ref{mainthm: abelian} and \ref{mainthm: rho1}}\label{sec: proofs of structure theorems}

In this section, we prove the main structure theorems.

\begin{lemma}\label{lem: eigenvector abelian}
    Under the assumption of Theorem \ref{mainthm: abelian}, 
    we have $f^*R_f\equiv \delta_f R_f$.
\end{lemma}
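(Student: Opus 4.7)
The plan is to combine the cone decomposition from Theorem~\ref{thm: cone} with an iteration argument. First I would invoke Lemma~\ref{lem: integral} and Theorem~\ref{thm: cone} in its case~(1) (valid because $\Nef(Y)=\PE(Y)$ for $Y$ abelian) to produce a nef, $\pi$-ample Cartier divisor $D$ with $f^{\ast}D\equiv \delta_fD$ and the eigen-decomposition
\[
\N^1(X)=\pi^{\ast}\N^1(Y)\oplus \bbr D,\qquad \PE(X)=\pi^{\ast}\PE(Y)\oplus \bbr_{\ge 0}D,
\]
on which $f^{\ast}$ acts as $g^{\ast}:=(f|_Y)^{\ast}$ on the first factor and as $\delta_f$ on $\bbr D$. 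Since $\delta_{f|_Y}<\delta_f$, the scalar $\delta_f$ is not an eigenvalue of $g^{\ast}|_{\N^1(Y)}$.

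Next I would write $R_f\equiv \pi^{\ast}B+aD$ with $B\in \PE(Y)$ and $a\ge 0$. Applying $f^{\ast}$ gives $f^{\ast}R_f\equiv \pi^{\ast}(g^{\ast}B)+\delta_faD$, so the desired equality $f^{\ast}R_f\equiv \delta_fR_f$ reduces to $g^{\ast}B\equiv \delta_fB$, which by the spectral observation above is equivalent to $B\equiv 0$. Because every surjective endomorphism of an abelian variety is \'etale, $g$ is \'etale, and by the $f$-equivariance every prime component of $R_f$ is $\pi$-horizontal (any vertical component would arise from ramification of $g$); thus $a>0$.

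The main step, and the key obstacle, will be to prove $B\equiv 0$. The plan is to replace $f$ by a suitable iterate $f^N$ so that each $\pi$-horizontal prime $E$ in the (stabilised) support of $R_{f^N}$ becomes individually $(f^N)^{-1}$-invariant; then $(f^N)^{\ast}E=c_E E$ for some $c_E>0$, and expanding $[E]\equiv \pi^{\ast}\beta_E+a_ED$ with $a_E>0$, the eigenvector equation and the spectral observation force $c_E=\delta_f^N$ and $\beta_E=0$, hence $E\equiv a_E D$. Summing yields $R_{f^N}\equiv c_N D$, whence the effective divisor $R_{f^N}-R_f=\sum_{i=1}^{N-1}(f^i)^{\ast}R_f$ has class $(c_N-a)D+\pi^{\ast}(-B)\in \PE(X)$; by uniqueness of the cone decomposition we obtain $-B\in \PE(Y)$, and combining with $B\in \PE(Y)$ together with the strict convexity $\PE(Y)\cap(-\PE(Y))=\{0\}$ (on abelian $Y$, any class that is nef along with its negative is numerically trivial) forces $B\equiv 0$. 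The hard part will be justifying the iteration: showing that $\Supp R_{f^n}$ eventually stabilises as a finite union of horizontal primes and that the $f$-permutation of those primes becomes trivial after a further iterate. I would tackle this using Theorem~\ref{thm: pos} to control the eventual support together with Lemma~\ref{lem: wild morphism in codimension 1}, which excludes $g^{-1}$-invariant prime divisors on $Y$ via the Zariski dense orbit that $g$ inherits from $f$.
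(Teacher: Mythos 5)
Your setup is sound and matches the paper's: you correctly reduce the lemma, via Lemma~\ref{lem: integral} and the cone decomposition, to showing that the $\pi^{\ast}\N^1(Y)$-component $B$ of $R_f$ is numerically trivial, using that $\delta_f$ exceeds the spectral radius of $g^{\ast}|_{\N^1(Y)}$. The problem is that your ``main step'' does not go through. You propose to replace $f$ by an iterate $f^N$ so that every horizontal prime in $\Supp R_{f^N}$ becomes individually $(f^N)^{-1}$-invariant, and you yourself flag the stabilisation of $\Supp R_{f^n}$ as the hard part. This stabilisation is false in general: by Lemma~\ref{lem: composition formula}, $\Supp R_{f^n}=\bigcup_{i=0}^{n-1}f^{-i}(\Supp R_f)$, which grows without bound unless every component of $\Supp R_f$ is already $f^{-1}$-periodic. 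Neither of the tools you cite closes this gap. Theorem~\ref{thm: pos} only gives $\kappa_f(X,R_f)>0$, which is perfectly compatible with a non-periodic support (indeed, by Lemma~\ref{lem-fkappa>0} the non-periodic case is precisely the one where positivity is automatic, so if anything it points the other way); and Lemma~\ref{lem: wild morphism in codimension 1} concerns invariant divisors on $Y$, not on $X$. So the chain $R_{f^N}\equiv c_N D \Rightarrow -B\in\PE(Y)\Rightarrow B\equiv 0$ rests on an unproved and generally false premise.

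The paper avoids this entirely by decomposing $K_X$ rather than $R_f$: write $K_X\equiv\pi^{\ast}B_0+aD$ with $a<0$, and prove that $B_0$ is \emph{nef} using the cone theorem together with the fact that the abelian variety $Y$ contains no rational curves (so $\pi$ is the only $K_X$-negative extremal contraction, and any $\pi^{\ast}B_0$-negative extremal ray would be $K_X$-negative, a contradiction). Then $R_f=K_X-f^{\ast}K_X$ has $Y$-component $\Delta=B_0-g^{\ast}B_0$, which is pseudo-effective by the cone decomposition, and the telescoping identity $\sum_{i=0}^{n}(g^i)^{\ast}\Delta=B_0-(g^{n+1})^{\ast}B_0$ is bounded above against $H^{\dim Y-1}$ by nefness of $B_0$ and of $(g^{n+1})^{\ast}B_0$, while each summand contributes a fixed positive amount by integrality; hence $\Delta\equiv 0$. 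If you want to salvage your route, you need a replacement for the invariance-of-components step, and the nefness of the canonical part plus this boundedness argument is exactly that replacement. (A minor additional caveat: your parenthetical claim that a vertical component of $R_f$ would force $g$ to ramify deserves justification, though nothing in your argument actually depends on $a>0$ for $R_f$ itself.)
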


\begin{proof}
    By Lemma \ref{lem: integral}, there exists a nef and $\pi$-ample Cartier divisor $D$ such that $f^*D\sim_{\Q} \delta_fD$.
    Since $-K_X$ is $\pi$-ample, we can write 
    $$K_X\equiv \pi^*B+aD$$
    for some $\Q$-Cartier divisor $B$ on $Y$ and $a<0$.
    We first verify the following claim.

    \begin{claim}\label{cla: nefness}
        The divisor $B$ is nef.
    \end{claim}

    \begin{proof}
        Suppose the contrary that $B$ is not nef.
        Let $\mathbb{R}_{\ge 0}F\subseteq \NE(X)$ be the extremal ray with $\pi_*F=0$.
        Let $\mathbb{R}_{\ge 0}Z\subseteq \NE(X)$ be a $\pi^*B$-negative extremal ray.
        Then $\mathbb{R}_{\ge 0}F$ and $\mathbb{R}_{\ge 0}Z$ are two different rays.
        Note that 
        $$K_X\cdot Z=(\pi^*B+aD)\cdot Z<0$$
        where the inequality is due to $D$ being nef and $a<0$.
        In particular, $\mathbb{R}_{\ge 0}Z$ is a $K_X$-negative extremal ray.

        We notice that the abelian variety $Y$ contains no rational curve.
        By the cone theorem (cf.~\cite[Theorem 3.7]{KM98}), 
        we see that $\pi$ is the only contraction of $K_X$-negative extremal ray.
        So $\pi_*Z=0$ and hence $\pi^*B\cdot Z=0$, a contradiction.        
    \end{proof}

    By the ramification formula, we have 
    \[ R_f = K_X - f^*K_X \equiv \pi^*(B - g^*B) + a(\delta_f-1)D \]
    where $g=f|_Y$.
    Then $B - g^*B$ is pseudo-effective by the Cone Decomposition Theorem \ref{thm: cone}.
    Let $m$ be a positive integer such that $mB$ is Cartier.
    
    It suffices for us to show $\Delta\coloneqq B - g^*B \equiv 0$.
    Fix an ample Cartier divisor $H$ on $Y$.
    Suppose the contrary that $\Delta\not\equiv 0$.
    Then $m (g^i)^*\Delta \cdot H^{\dim X-1}$ is a positive integer for any $i\ge 0$ by noting that $m (g^i)^*\Delta$ is a pseudo-effective Cartier divisor which is not numerically trivial.
    By Claim \ref{cla: nefness}, we have 
    \[ mB \cdot H^{\dim X-1} \geq (mB - m(g^{n+1})^*B)\cdot H^{\dim X-1} = \sum_{i=0}^n m (g^i)^*\Delta \cdot H^{\dim X-1} \geq n\]
    for all $n>0$.
    This is a contradiction.
\end{proof}

\begin{lemma}\label{lem: eigenvector rho1}
    Under the assumption of Theorem \ref{mainthm: rho1}, 
    we have $f^*R_f\equiv \delta_f R_f$.
\end{lemma}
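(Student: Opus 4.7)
The plan is to run a streamlined version of the proof of Lemma \ref{lem: eigenvector abelian}. The point is that, in contrast to the abelian setting, the hypotheses $\rho(Y)=1$ and $\delta_g=1$ together force the pullback action of $g=f|_Y$ on $\N^1(Y)$ to be trivial, so the analysis of $B-g^*B$ that dominated the abelian case collapses immediately and the Cone Decomposition Theorem is not even invoked.

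First I would apply Lemma \ref{lem: integral} to produce a nef and $\pi$-ample Cartier divisor $D$ on $X$ satisfying $f^*D\sim_{\Q}\delta_f D$. Since $\pi$ is an extremal Fano contraction, we have $\rho(X)=\rho(Y)+1$ and $-K_X$ is $\pi$-ample, so one can write
\[ K_X\equiv \pi^*B + aD \]
for some $\Q$-Cartier divisor $B$ on $Y$ and some $a<0$, the negativity coming from the restriction $K_X|_{X_y}\equiv a D|_{X_y}$ being anti-ample on a general fibre while $D|_{X_y}$ is ample.

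The key step is to observe that $g^*$ acts as the identity on $\N^1(Y)$. Pick an ample generator $H$ of the one-dimensional real vector space $\N^1(Y)_{\R}$; surjectivity of $g$ gives $g^*H\equiv\lambda H$ with $\lambda>0$, and $\lambda$ must equal the spectral radius $\delta_g=1$. Hence $g^*B\equiv B$. Applying $f^*$ to $K_X$ via $f^*\circ\pi^*=\pi^*\circ g^*$ and $f^*D\equiv\delta_f D$, the ramification formula then yields
\[ R_f \;=\; K_X - f^*K_X \;\equiv\; \pi^*(B-g^*B) - a(\delta_f-1)D \;\equiv\; -a(\delta_f-1)\,D, \]
with $-a(\delta_f-1)>0$. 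Since $f^*D\equiv \delta_f D$, we conclude $f^*R_f\equiv\delta_f R_f$.

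I do not foresee a genuine obstacle here: the whole argument reduces to sign bookkeeping in the decomposition of $K_X$ and the elementary extraction of $\lambda=\delta_g=1$ from $\dim_{\R}\N^1(Y)=1$. The only point worth double-checking is that $a<0$ and that one is applying the correct direction of the ramification identity $R_f=K_X-f^*K_X$, so that the resulting coefficient of $D$ is strictly positive; otherwise, the numerical eigenvector identity holds unconditionally.
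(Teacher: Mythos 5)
Your argument is correct and is in essence the same as the paper's: the paper observes directly that $f^*|_{\N^1(X)}$ is similar to $\diag[\delta_f,1]$ (since $\rho(X)=2$ and the eigenvalue on $\pi^*\N^1(Y)$ is $\delta_g=1$), so that $R_f=K_X-f^*K_X\in\Imm(\id-f^*)|_{\N^1(X)}=\ker(\delta_f\id-f^*)|_{\N^1(X)}$, and your decomposition $K_X\equiv\pi^*B+aD$ with $g^*B\equiv B$ is exactly this computation written out in an eigenbasis supplied by Lemma \ref{lem: integral}. The sign bookkeeping ($a<0$, positivity of $-a(\delta_f-1)$) is harmless but, as you note yourself, not needed for the numerical eigenvector identity.
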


\begin{proof}
    Since $\rho(X)=2$ and $\delta_f>\delta_{f|_Y}=1$, 
    we see that $f^*|_{\N^1(X)}$ is similar to $\diag[\delta_f, 1]$.
    Then the ramification divisor formula implies that 
    $$R_f=K_X-f^*K_X\in \Imm (\id-f^*)|_{\N^1(X)}=\ker(\delta_f\id-f^*)|_{\N^1(X)}.$$
    Therefore, $f^*R_f\equiv \delta_fR_f$.
\end{proof}

\begin{proof}[Proof of Theorems \ref{mainthm: abelian} and \ref{mainthm: rho1}]
    By Lemma \ref{lem: eigenvector abelian} and \ref{lem: eigenvector rho1}, 
    $f^*R_f\equiv\delta_f R_f$.
    By Theorem \ref{thm: pos}, we see that $\kappa_f(X,R_f)>0$.

    We claim that $\kappa_f(X,R_f)<\dim X$.
    Suppose the contrary that $\kappa_f(X,R_f)=\dim X$.
    By Lemma \ref{lem: composition formula}, we have $\kappa(X,R_{f^s})=\kappa_f(X,R_f)$ for some $s\gg 1$.
    Then $R_{f^s}$ is big.
    Note that $f^*R_{f^s}\equiv \delta_f R_{f^s}$ and $\delta_f>1$ is an integer by Lemma \ref{lem: integral}.
    Then $f$ is $\delta_f$-polarized by \cite[Proposition 1.1]{MZ18}.
    Note that $\dim Y>0$.
    By \cite[Lemma 3.1 and Theorem 3.11]{MZ18}, 
    the endomorphism $g$ is also $\delta_f$-polarized.
    This is a contradiction because $\delta_g<\delta_f$.
    So the claim is proved.

    Finally, we are done by Theorems \ref{thm: fiitaka} and \ref{thm: fiitaka polarized}.
\end{proof}


\section{Proof of Corollaries \ref{maincor: abelian} and \ref{maincor: pn}}\label{sec: proofs of corollaries}
In this section, we prove Kawaguchi-Silverman conjecture for certain fibrations.

\begin{proof}[Proof of Corollary \ref{maincor: abelian}]
    Note that the Fano contraction $\pi\colon X\to Y$ is also the Albanese map of $X$.
    Then $\pi$ is $f$-equivariant.
    Note that $\delta_f\ge \delta_{f|_Y}$.
    By Theorem \ref{thm: ksc abelian}, KSC holds for $f|_Y$.

    Suppose $\delta_f=\delta_{f|_Y}$.
    Then KSC holds for $f$ by Lemma \ref{lem: ksc iff}.

    Suppose $\delta_f>\delta_{f|_Y}$.
    If $\dim Y=0$, then $f$ is $\delta_f$-polarized and KSC holds for $f$ by Theorem \ref{thm: ksc polarized}.
    So we may assume $\dim Y>0$.
    By Theorem \ref{mainthm: abelian}, there is an $f$-equivariant dominant rational map $\varphi:X\dashrightarrow Z$ with $0<\dim Z<\dim X$ and $f|_Z$ is $\delta_f$-polarized.
    Since $\dim Z>0$, we have $\delta_{f|_Z}=\delta_f$.
    By Theorem \ref{thm: ksc polarized}, KSC holds for $f|_Z$.
    By Lemma \ref{lem: ksc iff}, KSC holds for $f$.
\end{proof}

\begin{proof}[Proof of Corollary \ref{maincor: pn}]
    Let $\pi:X\to Y$ be the $\mathbb{P}^n$-bundle.
    By Lemma \ref{lem: ksc iteration}, we are allowed to do iteration.
    After iteration, we may assume $\pi$ is $f$-equivariant (cf.~\cite[Lemma 6.2 and Remark 6.3]{MZ18}).
    Denote by $g\coloneqq f|_Y$
    
    {\bf Case: $Y$ is $Q$-abelian.} 
    We apply the Albanese closure introduced in \cite[Lemma 2.12]{NZ10}; see also \cite[Lemma 8.1]{CMZ20}.
    Let $\sigma:A\to Y$ be the Albanese closure of $Y$.
    Then $g$ lifts to a surjective endomorphism of $A$.
    Let $W\coloneqq X\times_Y A$.
    Then $f$ lifts to a surjective endomorphism of $W$.
    Note that the projection $W\to X$ is finite surjective and the projection $W\to Y$ is still a $\mathbb{P}^n$-bundle.
    By Lemma \ref{lem: ksc iff}, we may replace $X\to Y$ by $W\to A$ and assume $Y$ is an abelian variety.
    Note that $\pi$ is a Fano contraction of an extremal ray.
    Then we are done by Corollary \ref{maincor: abelian}.
    
    {\bf Case: $\rho(Y)=1$}.
    We may assume $\delta_f>1$.
    Suppose $\delta_f=\delta_g$. 
    Then $g$ is $\delta_g$-polarized and KSC holds for $g$ and hence $f$ by Theorem \ref{thm: ksc polarized} and Lemma \ref{lem: ksc iff}.
    Suppose $\delta_f>\delta_g=1$.
    Then we may apply Theorem \ref{mainthm: rho1} and follow the same argument in the proof of Corollary \ref{maincor: abelian}.
    Suppose $\delta_f>\delta_g>1$.
    Then $f^*|_{\N^1(X)}$ is similar to $\diag[\delta_f,\delta_g]$.
    By \cite[Theorem 1.1]{Men20}, we see that $f$ is int-amplified.
    Theorem \ref{thm: ksc int-amplified} concludes the proof.
\end{proof}

\bibliographystyle{alpha}
\bibliography{ref.bib}

\newcommand{\etalchar}[1]{$^{#1}$}
\begin{thebibliography}{MMS{\etalchar{+}}22}

\bibitem[AC08]{AC08}
Ekaterina Amerik and Fr\'ed\'eric Campana.
\newblock Fibrations m\'eromorphes sur certaines vari\'et\'es \`a{} fibr\'e{} canonique trivial.
\newblock {\em Pure Appl. Math. Q.}, 4(2):509--545, 2008.

\bibitem[Bir67]{Bir67}
Garrett Birkhoff.
\newblock Linear transformations with invariant cones.
\newblock {\em Amer. Math. Monthly}, 74:274--276, 1967.

\bibitem[BM08]{BM08}
Edward Bierstone and Pierre~D. Milman.
\newblock Functoriality in resolution of singularities.
\newblock {\em Publ. Res. Inst. Math. Sci.}, 44(2):609--639, 2008.

\bibitem[BMSZ18]{BMSZ18}
Morgan~V. Brown, James McKernan, Roberto Svaldi, and Hong~R. Zong.
\newblock A geometric characterization of toric varieties.
\newblock {\em Duke Math. J.}, 167(5):923--968, 2018.

\bibitem[CMZ20]{CMZ20}
Paolo Cascini, Sheng Meng, and De-Qi Zhang.
\newblock Polarized endomorphisms of normal projective threefolds in arbitrary characteristic.
\newblock {\em Math. Ann.}, 378(1-2):637--665, 2020.

\bibitem[Dan20]{Dan20}
Nguyen-Bac Dang.
\newblock Degrees of iterates of rational maps on normal projective varieties.
\newblock {\em Proc. Lond. Math. Soc. (3)}, 121(5):1268--1310, 2020.

\bibitem[DB22]{DLB22}
St\'ephane Druel and Federico~Lo Bianco.
\newblock Numerical characterization of some toric fiber bundles.
\newblock {\em Math. Z.}, 300(4):3357--3382, 2022.

\bibitem[Deb01]{Deb01}
Olivier Debarre.
\newblock {\em Higher-dimensional algebraic geometry}.
\newblock Universitext. Springer-Verlag, New York, 2001.

\bibitem[Dol07]{Dol07}
Igor~V. Dolgachev.
\newblock Logarithmic sheaves attached to arrangements of hyperplanes.
\newblock {\em J. Math. Kyoto Univ.}, 47(1):35--64, 2007.

\bibitem[DS04]{DS04}
Tien-Cuong Dinh and Nessim Sibony.
\newblock Regularization of currents and entropy.
\newblock {\em Ann. Sci. \'Ecole Norm. Sup. (4)}, 37(6):959--971, 2004.

\bibitem[DS05]{DS05}
Tien-Cuong Dinh and Nessim Sibony.
\newblock Une borne sup\'erieure pour l'entropie topologique d'une application rationnelle.
\newblock {\em Ann. of Math. (2)}, 161(3):1637--1644, 2005.

\bibitem[GKP16]{GKP16}
Daniel Greb, Stefan Kebekus, and Thomas Peternell.
\newblock {\'E}tale fundamental groups of {K}awamata log terminal spaces, flat sheaves, and quotients of abelian varieties.
\newblock {\em Duke Math. J.}, 165(10):1965--2004, 2016.

\bibitem[Har77]{Har77}
Robin Hartshorne.
\newblock {\em Algebraic geometry}, volume No. 52 of {\em Graduate Texts in Mathematics}.
\newblock Springer-Verlag, New York-Heidelberg, 1977.

\bibitem[Har80]{Har80}
Robin Hartshorne.
\newblock Stable reflexive sheaves.
\newblock {\em Math. Ann.}, 254(2):121--176, 1980.

\bibitem[HL10]{HL10}
Daniel Huybrechts and Manfred Lehn.
\newblock {\em The geometry of moduli spaces of sheaves}.
\newblock Cambridge Mathematical Library. Cambridge University Press, Cambridge, second edition, 2010.

\bibitem[HS00]{HS00}
Marc Hindry and Joseph~H. Silverman.
\newblock {\em Diophantine geometry}, volume 201 of {\em Graduate Texts in Mathematics}.
\newblock Springer-Verlag, New York, 2000.
\newblock An introduction.

\bibitem[Jow25]{Jow25}
Shin-Yao Jow.
\newblock Nef or pseudoeffective cycles on projective bundles.
\newblock {\em Math. Z.}, 310(1):Paper No. 3, 2025.

\bibitem[JSXZ24]{JSXZ24}
Jia Jia, Takahiro Shibata, Junyi Xie, and De-Qi Zhang.
\newblock Endomorphisms of quasi-projective varieties: towards {Z}ariski dense orbit and {K}awaguchi-{S}ilverman conjectures.
\newblock {\em Math. Res. Lett.}, 31(3):701--746, 2024.

\bibitem[Kaw06]{Kaw06}
Shu Kawaguchi.
\newblock Canonical heights, invariant currents, and dynamical eigensystems of morphisms for line bundles.
\newblock {\em J. Reine Angew. Math.}, 597:135--173, 2006.

\bibitem[KM98]{KM98}
J\'anos Koll\'ar and Shigefumi Mori.
\newblock {\em Birational geometry of algebraic varieties}, volume 134 of {\em Cambridge Tracts in Mathematics}.
\newblock Cambridge University Press, Cambridge, 1998.
\newblock With the collaboration of C. H. Clemens and A. Corti, Translated from the 1998 Japanese original.

\bibitem[KS16a]{KS16a}
Shu Kawaguchi and Joseph~H. Silverman.
\newblock Dynamical canonical heights for {J}ordan blocks, arithmetic degrees of orbits, and nef canonical heights on abelian varieties.
\newblock {\em Trans. Amer. Math. Soc.}, 368(7):5009--5035, 2016.

\bibitem[KS16b]{KS16b}
Shu Kawaguchi and Joseph~H. Silverman.
\newblock On the dynamical and arithmetic degrees of rational self-maps of algebraic varieties.
\newblock {\em J. Reine Angew. Math.}, 713:21--48, 2016.

\bibitem[LM21]{LM21}
Sichen Li and Yohsuke Matsuzawa.
\newblock A note on {K}awaguchi-{S}ilverman conjecture.
\newblock {\em Internat. J. Math.}, 32(11):Paper No. 2150085, 12, 2021.

\bibitem[Mat86]{Mat86}
Hideyuki Matsumura.
\newblock {\em Commutative ring theory}, volume~8 of {\em Cambridge Studies in Advanced Mathematics}.
\newblock Cambridge University Press, Cambridge, 1986.
\newblock Translated from the Japanese by M. Reid.

\bibitem[Mat20]{Mat20b}
Yohsuke Matsuzawa.
\newblock On upper bounds of arithmetic degrees.
\newblock {\em Amer. J. Math.}, 142(6):1797--1820, 2020.

\bibitem[Mat23]{Matz23}
Yohsuke Matsuzawa.
\newblock Recent advances on {K}awaguchi-{S}ilverman conjecture.
\newblock Proceedings for the Simons Symposia on Algebraic, Complex, and Arithmetic Dynamics (to appear), 2023.
\newblock \href{https://arxiv.org/abs/2311.15489v1}{arXiv:2311.15489v1}.

\bibitem[Men20]{Men20}
Sheng Meng.
\newblock Building blocks of amplified endomorphisms of normal projective varieties.
\newblock {\em Math. Z.}, 294(3-4):1727--1747, 2020.

\bibitem[MMS{\etalchar{+}}22]{MMSZZ22}
Yohsuke Matsuzawa, Sheng Meng, Takahiro Shibata, De-Qi Zhang, and Guolei Zhong.
\newblock Invariant subvarieties with small dynamical degree.
\newblock {\em Int. Math. Res. Not. IMRN}, 2022(15):11448--11483, 2022.

\bibitem[MW24]{MW24}
Yohsuke Matsuzawa and Long Wang.
\newblock Arithmetic degrees and {Z}ariski dense orbits of cohomologically hyperbolic maps.
\newblock {\em Trans. Amer. Math. Soc.}, 377(9):6311--6340, 2024.

\bibitem[MX24]{MX24}
Yohsuke Matsuzawa and Junyi Xie.
\newblock Arithmetic degree and its application to {Z}ariski dense orbit conjecture.
\newblock Preprint, 2024.
\newblock \href{https://arxiv.org/abs/2409.06160}{arXiv:2409.06160v2}.

\bibitem[MYnY24]{MYY24b}
Joaqu\'in Moraga, Jos\'e~Ignacio Y\'a\~nez, and Wern Yeong.
\newblock Polarized endomorphisms of log {C}alabi-{Y}au pairs.
\newblock Preprint, 2024.
\newblock \href{https://arxiv.org/abs/2406.18092}{arXiv:2406.18092v1}.

\bibitem[MYYn25]{MYY25}
Joaqu\'in Moraga, Wern Yeong, and Jos\'e{}~Ignacio Y\'a\~nez.
\newblock Polarized endomorphisms of {F}ano varieties with complements.
\newblock {\em Int. Math. Res. Not. IMRN}, 2025(8):Paper No. rnaf093, 18, 2025.

\bibitem[MZ18]{MZ18}
Sheng Meng and De-Qi Zhang.
\newblock Building blocks of polarized endomorphisms of normal projective varieties.
\newblock {\em Adv. Math.}, 325:243--273, 2018.

\bibitem[MZ19]{MZ19}
Sheng Meng and De-Qi Zhang.
\newblock Characterizations of toric varieties via polarized endomorphisms.
\newblock {\em Math. Z.}, 292(3-4):1223--1231, 2019.

\bibitem[MZ22]{MZ22}
Sheng Meng and De-Qi Zhang.
\newblock Kawaguchi-{S}ilverman conjecture for certain surjective endomorphisms.
\newblock {\em Doc. Math.}, 27:1605--1642, 2022.

\bibitem[MZ23a]{MZ23a}
Sheng Meng and De-Qi Zhang.
\newblock Advances in the equivariant minimal model program and their applications in complex and arithmetic dynamics.
\newblock Proceedings for the Simons Symposia on Algebraic, Complex, and Arithmetic Dynamics (to appear), 2023.
\newblock \href{https://arxiv.org/abs/2311.16369v1}{arXiv:2311.16369v1}.

\bibitem[MZ23b]{MZ23b}
Sheng Meng and De-Qi Zhang.
\newblock Structures theorems and applications of non-isomorphic surjective endomorphisms of smooth projective threefolds.
\newblock Preprint, 2023.
\newblock \href{https://arxiv.org/abs/2309.07005v1}{arXiv:2309.07005v1}.

\bibitem[MZ23c]{MZg23}
Sheng Meng and Guolei Zhong.
\newblock Rigidity of rationally connected smooth projective varieties from dynamical viewpoints.
\newblock {\em Math. Res. Lett.}, 30(2):589--610, 2023.

\bibitem[MZ24]{MZg24}
Sheng Meng and Guolei Zhong.
\newblock Kawaguchi-{S}ilverman conjecture for int-amplified endomorphism.
\newblock Preprint, 2024.
\newblock \href{https://arxiv.org/abs/2408.00566}{arXiv:2408.00566v1}.

\bibitem[NZ10]{NZ10}
Noboru Nakayama and De-Qi Zhang.
\newblock Polarized endomorphisms of complex normal varieties.
\newblock {\em Math. Ann.}, 346(4):991--1018, 2010.

\bibitem[NZ21]{NZ23}
Brett Nasserden and Alexandre Zotine.
\newblock Kawaguchi-{S}ilverman conjecture for projective bundles on curves.
\newblock Preprint, 2021.
\newblock \href{https://arxiv.org/abs/2310.03313}{arXiv:2310.03313v1}.

\bibitem[Sil17]{Sil17}
Joseph~H. Silverman.
\newblock Arithmetic and dynamical degrees on abelian varieties.
\newblock {\em J. Th\'eor. Nombres Bordeaux}, 29(1):151--167, 2017.

\bibitem[Uen75]{Uen75}
Kenji Ueno.
\newblock {\em Classification theory of algebraic varieties and compact complex spaces}, volume Vol. 439 of {\em Lecture Notes in Mathematics}.
\newblock Springer-Verlag, Berlin-New York, 1975.
\newblock Notes written in collaboration with P. Cherenack.

\bibitem[Wan24]{Wa24}
Long Wang.
\newblock Periodic points and arithmetic degrees of certain rational self-maps.
\newblock {\em J. Math. Soc. Japan}, 76(3):713--738, 2024.

\bibitem[Xie23a]{Xie23survey}
Junyi Xie.
\newblock Around the dynamical {M}ordell-{L}ang conjecture.
\newblock Proceedings for the Simons Symposia on Algebraic, Complex, and Arithmetic Dynamics (to appear), 2023.
\newblock \href{https://arxiv.org/abs/2307.05885}{arXiv:2307.05885v2}.

\bibitem[Xie23b]{Xie23}
Junyi Xie.
\newblock Remarks on algebraic dynamics in positive characteristic.
\newblock {\em J. Reine Angew. Math.}, 797:117--153, 2023.

\bibitem[Xie25]{Xie25}
Junyi Xie.
\newblock The existence of {Z}ariski dense orbits for endomorphisms of projective surfaces.
\newblock {\em J. Amer. Math. Soc.}, 38(1):1--62, 2025.
\newblock With an appendix in collaboration with Thomas Tucker.

\bibitem[XY25a]{XY25b}
Junyi Xie and She Yang.
\newblock Dynamical {M}ordell-{L}ang problem for automorphisms of surfaces in positive characteristic.
\newblock Preprint, 2025.
\newblock \href{https://arxiv.org/abs/2504.02295v1}{arXiv:2504.02295v1}.

\bibitem[XY25b]{XY25a}
Junyi Xie and She Yang.
\newblock Height arguments toward the dynamical {M}ordell-{L}ang problem in arbitrary characteristic.
\newblock Preprint, 2025.
\newblock \href{https://arxiv.org/abs/2504.01563v1}{arXiv:2504.01563v1}.

\bibitem[Zha14]{Zha14}
De-Qi Zhang.
\newblock Invariant hypersurfaces of endomorphisms of projective varieties.
\newblock {\em Adv. Math.}, 252:185--203, 2014.

\end{thebibliography}

\end{document}